\documentclass[11pt,letterpaper]{article}
\usepackage[latin1]{inputenc}
\usepackage{amsmath}
\usepackage{amsfonts}
\usepackage{amssymb}
\usepackage{amsthm}
\usepackage{graphicx}   
\usepackage{epstopdf}
\usepackage{subfigure}
\usepackage{pifont}

\oddsidemargin=-0.0in
\evensidemargin=-0.0in
\textwidth=6.3in
\topmargin=-0.75in
\textheight=9in

\pagestyle{plain}
\pagenumbering{arabic} 

\title{$F$-Signature of Affine Toric Varieties}			
\author{Michael Von Korff}		
\date{October 2011}					

\DeclareMathOperator{\Lattice}{Lattice}
\DeclareMathOperator{\im}{im}
\DeclareMathOperator{\Vol}{Volume}

\DeclareMathOperator{\Hom}{Hom}
\DeclareMathOperator{\Frac}{Frac}

\DeclareMathOperator{\Spec}{Spec}
\DeclareMathOperator{\Rad}{Rad}
\DeclareMathOperator{\Newt}{Newt}
\DeclareMathOperator{\dual}{\vee}
\DeclareMathOperator{\PDa}{P_{\sigma}^{D,\mathfrak{a}}}
\newcommand{\gr}{gr}
\newcommand{\divf}{\mbox{div }}
\newcommand{\into}{\hookrightarrow}
\newcommand{\onto}{\twoheadrightarrow}

\newcommand{\suchthat}{\,|\:}
\newtheorem*{ThmAffineToricFSignature}{Theorem \ref{AffineToricFSignature}}
\newtheorem*{ThmFSignatureOfTriples}{Theorem \ref{FSignatureOfTriples}}
\newtheorem*{ThmFSignatureOfPairs}{Theorem \ref{FSignatureOfPairs}}
\newtheorem*{CorGorensteinTriples}{Corollary \ref{GorensteinTriples}}

\newtheorem{thm}{Theorem}[section]
\newtheorem{cor}[thm]{Corollary}
\newtheorem{defn}[thm]{Definition}
\newtheorem{lem}[thm]{Lemma}
\theoremstyle{remark}
\newtheorem{rem}[thm]{Remark}

\newtheorem{ex}[thm]{Example}

\begin{document}

\maketitle

\section{Introduction}

The $F$-signature is a numerical invariant of a local (or graded) ring $R$ containing a field of positive characteristic. It arose naturally in \cite{SV97} in the study of the ring of differential operators in prime characteristic but was first singled out as an object of independent interest in \cite{HL04}. It has been shown to be intimately connected to various measures of the singularity of $R$. For example, the $F$-signature is positive if and only if the ring is strongly $F$-regular, and it is equal to 1 if and only if the ring is regular \cite{HL04}. (In general, it is a real number between 0 and 1.)

In this paper, we will compute the $F$-signature when $R$ is the coordinate ring of an affine toric variety or, equivalently, a normal monomial ring. (The $F$-signature of a non-normal ring is zero in any case.) In particular:

\begin{ThmAffineToricFSignature}(cf. \cite{WY04}, Theorem 5.1)
Let $R=k[S]$ be the coordinate ring of an affine toric variety $X$ without torus factors, with the conventions of Remark \ref{ToricAssumptions} below. In particular, $S = M\cap \sigma^{\dual}$, where $M$ is a lattice, $\sigma\subset M_{\mathbb{R}}=M\otimes_{\mathbb{Z}} \mathbb{R}$ is a full-dimensional strongly convex rational polyhedral cone, and $\sigma^{\dual}$ is the dual cone to $\sigma$. Let $\vec{v}_1,\ldots,\vec{v}_r\in\mathbb{Z}^n$ be primitive generators for $\sigma$. Let $P_{\sigma}\subset \sigma$ be the polytope $\{\vec{w}\in M_{\mathbb{R}}\suchthat \forall i, 0\leq \vec{w}\cdot \vec{v}_i< 1\}$. Then $s(R) = \Vol(P_{\sigma})$.

More generally, suppose $X = X' \times T$, where $X'$ is a toric variety without torus factors and $T$ is an algebraic torus. Let $N'_{\mathbb{R}}\subset N_{\mathbb{R}}$ be the vector subspace spanned by $\sigma$, and let $\sigma'$ be $\sigma$ viewed as a cone in $N'_{\mathbb{R}}$. Then $s(k[X]) = s(k[X'])=\Vol(P_{\sigma'})$.
\end{ThmAffineToricFSignature}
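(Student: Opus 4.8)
The plan is to deduce the ``torus factor'' statement from the first part of the theorem, which I take as already established. Recall the shape of that argument: with $q=p^e$ and $F^e_{\ast}(-)$ the $e$-th Frobenius pushforward, one decomposes $F^e_{\ast}k[X']$ as an $R'=k[X']$-module along the cosets of $M'$ in $\tfrac1q M'$; the coset of $c\in M'/qM'$ contributes a free summand exactly when $c+qM'$ meets $q\,P_{\sigma'}$, and since $\sigma'$ is full-dimensional in $N'_{\mathbb R}$ distinct lattice points of $q\,P_{\sigma'}$ lie in distinct cosets, so $\operatorname{frk}_{R'}(F^e_{\ast}k[X'])=\#(q\,P_{\sigma'}\cap M')$; as $P_{\sigma'}$ is a bounded half-open polytope, dividing by $q^{\dim X'}$ and letting $q\to\infty$ yields $s(k[X'])=\Vol(P_{\sigma'})$ by the Ehrhart asymptotic. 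This is exactly the second equality in the statement to be proved, so only $s(k[X])=s(k[X'])$ remains.

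First I would make the product structure explicit. Split $M=M'\oplus M''$, with $M''$ the character lattice of $T$ and $M'\cong\Hom(N',\mathbb Z)$. Since $\sigma\subset N'_{\mathbb R}$, its dual cone is $\sigma^{\dual}=(\sigma')^{\dual}\oplus M''_{\mathbb R}$, hence $S=M\cap\sigma^{\dual}=S'\times M''$ with $S'=M'\cap(\sigma')^{\dual}$, and therefore
\[
R:=k[X]=k[S']\otimes_k k[M'']=R'\otimes_k A,
\]
where $A=k[M'']\cong k[y_1^{\pm1},\dots,y_t^{\pm1}]$ is a Laurent polynomial ring in $t=\dim T$ variables and $\dim R=\dim R'+t$. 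Because $A$ is regular, Frobenius makes $F^e_{\ast}A$ a free $A$-module of rank $q^t$ (with basis the Laurent monomials representing $M''/qM''$), and since $F^e_{\ast}$ commutes with $-\otimes_k-$ there is an $R$-module isomorphism
\[
F^e_{\ast}R\;\cong\;F^e_{\ast}R'\otimes_k F^e_{\ast}A\;\cong\;\bigl(F^e_{\ast}R'\otimes_k A\bigr)^{\oplus q^t}.
\]
By additivity of free rank, this reduces everything to the claim: for any finitely generated $R'$-module $P$, one has $\operatorname{frk}_{R}(P\otimes_k A)=\operatorname{frk}_{R'}(P)$.

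To prove that claim I would write $P=(R')^{\oplus a}\oplus P_0$ with $\operatorname{frk}_{R'}(P_0)=0$; using $R'\otimes_k A=R$ this gives $P\otimes_k A\cong R^{\oplus a}\oplus(P_0\otimes_k A)$, so it suffices to show $\operatorname{frk}_{R}(P_0\otimes_k A)=0$. Were there a split $R$-linear injection $R\into P_0\otimes_k A$, then applying $-\otimes_R R'$ along the surjection $R=R'\otimes_k A\onto R'$ that sends each $y_i\mapsto1$ — together with the canonical identification $(P_0\otimes_k A)\otimes_R R'\cong P_0$ — would produce a split $R'$-linear injection $R'\into P_0$, contradicting $\operatorname{frk}_{R'}(P_0)=0$. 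Taking $P=F^e_{\ast}R'$ and combining with the first part then gives $\operatorname{frk}_{R}(F^e_{\ast}R)=q^t\operatorname{frk}_{R'}(F^e_{\ast}R')$; dividing by $q^{\dim R}=q^{\,\dim R'+t}$ and letting $q\to\infty$ shows that the limit defining $s(R)$ exists and equals the one for $R'$, so $s(k[X])=s(k[X'])=\Vol(P_{\sigma'})$.

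There is no deep obstacle: all the substantive work — the lattice-point count producing $\Vol(P_{\sigma'})$ — lives in the first part of the theorem. The steps that require care are tracking the Frobenius pushforward through the tensor product $R'\otimes_k A$ and identifying $F^e_{\ast}A$ as free of rank $q^t$; verifying that ``having no free summand'' descends along the specialization $R\onto R'$; and keeping the standing hypotheses (notably $k$ perfect, so that $R$ and $R'$ are $F$-finite) in force, so that free ranks are finite and additive and the $F$-signature is genuinely defined.
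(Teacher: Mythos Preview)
Your argument is correct, and for the torus-factor reduction it is close in spirit to the paper's but uses a genuinely different device at the key step. Both you and the paper write $R\simeq R'\otimes_k A$ with $A=k[M'']$, observe that $F^e_*A$ is free of rank $q^t$, and reduce to showing that tensoring with $A$ over $k$ does not create new free summands. The paper does this by invoking a general graded product formula (Theorem~\ref{GradedProductFSignature}): its Lemma~\ref{GradedTensorSummand} uses the $M$-grading to show that if $M_e\otimes_k N_e$ has a free $(R'\otimes_k A)$-summand then $M_e$ already has a free $R'$-summand, by picking out a graded simple tensor mapping to degree zero. You instead base-change along the $k$-point $A\to k$, $y_i\mapsto 1$, which collapses $P_0\otimes_k A$ back to $P_0$ and carries any split $R$-summand to a split $R'$-summand. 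Your route is more elementary and specific to the situation at hand (it uses that the second factor has a $k$-rational point, not merely a grading), while the paper's lemma yields the stronger general statement $s(R'\otimes_k S)=s(R')\,s(S)$ for arbitrary graded $S$. One small point worth making explicit in your write-up: the well-definedness of free rank on the $R$-side is exactly what your specialization buys you, since any splitting $R^{\oplus b}\hookrightarrow F^e_*R$ descends to $(R')^{\oplus b}\hookrightarrow F^e_*R\otimes_R R'$, and free rank is unambiguous over the $\mathbb N$-graded ring $R'$.
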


(We will review the notation of cones and toric varieties in the next section.) Our formula is equivalent to the one given in \cite{WY04} in the case where $X$ has no torus factors; when $X$ does have torus factors, our formula corrects the one given in \cite{WY04}, which does not hold in that case.

Our method of proof differs from the proof given in \cite{WY04}. It is similar to that used by Watanabe in \cite{Wat99} to compute toric Hilbert-Kunz multiplicities. A different formula for the $F$-signature of a normal monomial ring has also been computed by Singh in \cite{Sin05}. The methods used in this paper allow us to give an easy proof of Singh's result (Theorem \ref{SinghTheorem}).

The notion of $F$-signature of a pair $(R, D)$ or a triple $(R, D, \mathfrak{a}^t)$ has been defined in \cite{BST11}. We compute these in the toric case:

\begin{ThmFSignatureOfPairs}
Let $R$ be the coordinate ring of an affine toric variety, with conventions as in Remark \ref{ToricAssumptions}. Let $D$ be a torus-invariant divisor, with associated polytope $P_{\sigma}^D$ as in Definition \ref{PDDef}. Then $s(R) = \Vol(P_{\sigma}^D)$.
\end{ThmFSignatureOfPairs}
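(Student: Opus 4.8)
The plan is to run the argument of Theorem~\ref{AffineToricFSignature} with the free module $F^e_*R$ replaced by the Frobenius pushforward of the rank-one reflexive module attached to $D$, so that the coefficients of $D$ appear as a perturbation of the fundamental polytope. Write $q=p^e$ and $D=\sum_i a_iD_i$, where $D_i$ is the torus-invariant prime divisor corresponding to the ray $\mathbb{R}_{\ge 0}v_i$. After the reductions of Remark~\ref{ToricAssumptions} (perfect base field and, as in Theorem~\ref{AffineToricFSignature}, no torus factors), the definition in \cite{BST11} gives $s(R,D)=\lim_{e\to\infty}a_e^D/q^{\dim R}$, where $a_e^D$ is the maximal rank of a free $R$-direct summand of $F^e_*\bigl(R(\lceil(q-1)D\rceil)\bigr)$; equivalently $a_e^D=\dim_k\bigl(R/I_e^D\bigr)$ for the associated splitting ideal $I_e^D$. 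In the toric setting this module is $M$-graded: $R(\lceil(q-1)D\rceil)=\bigoplus k\chi^w$, the sum over $w\in M$ with $w\cdot v_i\ge-\lceil(q-1)a_i\rceil$ for all $i$, and $F^e_*$ rescales the grading by $\tfrac1q$, so its degree-$u$ piece is nonzero precisely when $u\in\tfrac1qM$ and $u\cdot v_i\ge-\lceil(q-1)a_i\rceil/q$ for all $i$.

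First I would determine which homogeneous generators $\chi^u$ span a free $R$-summand, exactly as in the proof of Theorem~\ref{AffineToricFSignature}. The evident graded projection of the module onto $R\chi^u$ is $R$-linear if and only if every $u'$ in the support of the module with $u'-u\in M$ already satisfies $u'-u\in\sigma^{\dual}$; since generators of this kind lie in pairwise distinct cosets of $M$ in $\tfrac1qM$, these projections can be used simultaneously, and $a_e^D$ is exactly the number of such $u$. This condition unwinds to: $\chi^u$ spans a free summand if and only if
\[
-\tfrac{\lceil(q-1)a_i\rceil}{q}\ \le\ u\cdot v_i\ <\ 1-\tfrac{\lceil(q-1)a_i\rceil}{q}\qquad\text{for all }i.
\]
Indeed, if $u\cdot v_i$ violates the upper bound for some $i$, then a lattice vector $z$ with $z\cdot v_i=-1$ and $z\cdot v_j\ge0$ for $j\ne i$ produces a strictly $\sigma^{\dual}$-smaller element $u+z$ of the same coset that still lies in the support; such a $z$ exists by primitivity of the $v_i$ together with full-dimensionality of $\bigcap_{j\ne i}\{w\cdot v_j\ge0\}$ — the same fact used in the divisor-free case — while the reverse implication is immediate because $z\cdot v_i\le-1$ whenever $z\in M$ and $z\cdot v_i<0$. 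Hence
\[
a_e^D\ =\ \#\Bigl\{\,u\in\tfrac1qM\;\suchthat\;-\tfrac{\lceil(q-1)a_i\rceil}{q}\le u\cdot v_i<1-\tfrac{\lceil(q-1)a_i\rceil}{q}\ \ \forall i\,\Bigr\}.
\]

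Finally I would pass to the limit. The region cut out by these inequalities is a bounded perturbation of $P_\sigma^D=\{\,w\in M_{\mathbb{R}}\suchthat-a_i\le w\cdot v_i<1-a_i\ \forall i\,\}$ (Definition~\ref{PDDef}, which specializes to $P_\sigma$ when $D=0$): because $\lceil(q-1)a_i\rceil/q\to a_i$ as $q\to\infty$, these regions converge to $P_\sigma^D$ in the Hausdorff metric, they are uniformly bounded since $\sigma$ is full-dimensional (so the $v_i$ positively span), and $\partial P_\sigma^D$ is a null set. Sandwiching the lattice-point count between the Ehrhart asymptotics of a slightly shrunk and a slightly enlarged copy of $P_\sigma^D$ and letting the perturbation tend to $0$ then gives $a_e^D/q^{\dim R}\to\Vol(P_\sigma^D)$, i.e.\ $s(R,D)=\Vol(P_\sigma^D)$.

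The main obstacle is bookkeeping rather than a new idea: matching the combinatorial picture above to the precise definition of the $F$-signature of a pair in \cite{BST11} — which twist $R(\lceil(q-1)D\rceil)$ to use, and checking via the graded splitting argument that $a_e^D$ is the full free rank and not just a lower bound — and handling the fact that the relevant polytope depends on $q$ through the ceilings $\lceil(q-1)a_i\rceil$, so the final limit is a convergence-of-convex-bodies statement rather than a bare Ehrhart count. Everything else transcribes directly from the proof of Theorem~\ref{AffineToricFSignature}.
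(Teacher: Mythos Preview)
There is a genuine gap: you have misidentified $a_e^D$. You claim that $a_e^D$ is the maximal rank of a free $R$-summand of $F^e_*\bigl(R(\lceil(q-1)D\rceil)\bigr)$, but this is not what \cite{BST11} gives. As recorded in Lemma~\ref{PairsInterp}, $a_e^D$ is the maximal rank of a free summand of $R(\lceil(q-1)D\rceil)^{1/q}$ that is \emph{simultaneously} a free summand of the submodule $R^{1/q}$; equivalently, the generators must lie in $R^{1/q}$. Your count therefore drops the constraint $u\cdot v_i\ge 0$ and replaces it by the weaker $u\cdot v_i\ge -\lceil(q-1)a_i\rceil/q$, and your limiting region $\{w:-a_i\le w\cdot v_i<1-a_i\}$ is not the polytope $P_\sigma^D$ of Definition~\ref{PDDef}, which is $\{w:0\le w\cdot v_i<1-a_i\}$. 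These two polytopes do not have the same volume in general. Already for $R=k[x]$ and $D=aD_1$ with $0<a<1$, the module $R(\lceil(q-1)a\rceil D_1)^{1/q}\cong x^{-\lceil(q-1)a\rceil/q}k[x^{1/q}]$ is free of rank $q$, so your count gives $1$, whereas $s(R,D)=1-a=\Vol(P_\sigma^D)$.

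The fix is exactly the missing constraint: once you require the monomial generator $\chi^u$ to lie in $R^{1/q}$, i.e.\ $u\cdot v_i\ge 0$ for all $i$, your splitting criterion becomes $0\le u\cdot v_i<1-\lceil(q-1)a_i\rceil/q$, and the rest of your argument (Hausdorff convergence of the perturbed polytopes to $P_\sigma^D$) goes through and yields the correct volume. Apart from this, your handling of the $q$-dependent ceilings via convergence of convex bodies is a legitimate alternative to the paper's route, which instead invokes Lemma~\ref{ApproxTriples} to replace $(p^e-1)D$ by $p^eD$ and to assume $a_i\in\tfrac{1}{p^e}\mathbb{Z}$ by continuity, so that the polytope is literally $P_\sigma^D$ for all large $e$ and one can apply Lemma~\ref{PolytopeVolume} directly.
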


\begin{ThmFSignatureOfTriples}
Let $R$ be the coordinate ring of an affine toric variety, with conventions as in Remark \ref{ToricAssumptions}. Let $D$ be a torus-invariant divisor as in Definition \ref{PDDef}. Let $\mathfrak{a}\subset R$ be a monomial ideal, with associated polytope $\PDa$ as in Definition \ref{PaDef}. Then $s(R, D, \mathfrak{a}^t) = \Vol(\PDa)$.
\end{ThmFSignatureOfTriples}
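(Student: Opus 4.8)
The plan is to reduce the computation of $s(R,D,\mathfrak{a}^t)$ to a lattice-point count, following the strategy of the proofs of Theorem \ref{AffineToricFSignature} and Theorem \ref{FSignatureOfPairs}; the only new feature is that the monomial ideal $\mathfrak{a}$ contributes one further family of linear inequalities (and taking $\mathfrak{a}=R$ below recovers Theorem \ref{FSignatureOfPairs}). First I unwind the definition from \cite{BST11}. Write $n=\dim R$ and $D_e=\lceil(p^e-1)D\rceil$. Then
\[
s(R,D,\mathfrak{a}^t)=\lim_{e\to\infty}\frac{1}{p^{ne}}\,\dim_k\!\bigl(R/I_e\bigr),
\]
where $I_e$ is the ideal of those $r\in R$ such that $\varphi\bigl(F^e_*\!\bigl(r\,\mathfrak{a}^{\lceil t(p^e-1)\rceil}\bigr)\bigr)\subseteq\mathfrak{m}$ for every $R$-linear $\varphi\colon F^e_*R(D_e)\to R$. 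Because $R=k[S]$ is $M$-graded and both $D$ and $\mathfrak{a}$ are torus-invariant, $F^e_*R(D_e)$ is $\tfrac1{p^e}M$-graded, the module $\Hom_R(F^e_*R(D_e),R)$ splits into graded components --- each being the ``divisorial'' map attached to a coset of $M/p^eM$, exactly as in the proof of Theorem \ref{AffineToricFSignature} --- and $I_e$ is a monomial ideal; hence $\dim_k(R/I_e)$ counts the monomials $\chi^{\vec u}$, $\vec u\in S$, not lying in $I_e$.

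Second, I would identify those monomials combinatorially. The monomial $\chi^{\vec u}$ escapes $I_e$ exactly when some graded component of some $\varphi$ carries $F^e_*\!\bigl(\chi^{\vec u}\mathfrak{a}^{\lceil t(p^e-1)\rceil}\bigr)$ onto a unit. Unwinding the toric description of $\Hom_R(F^e_*R(D_e),R)$, this happens precisely when $\vec u/p^e$ lies in a fixed translate of $\PDa$: the inequalities $0\le\vec w\cdot\vec v_i<1$ defining $P_\sigma$ get shifted by the coefficients of $D$ to produce $P_\sigma^D$ (Definition \ref{PDDef}), while the requirement that the splitting factor through $\mathfrak{a}^{\lceil t(p^e-1)\rceil}$ forces $\vec w$ to lie sufficiently deep in the Newton polyhedron of $\mathfrak{a}$ --- which is exactly the extra condition cutting $P_\sigma^D$ down to $\PDa$ (Definition \ref{PaDef}). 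Since $\lceil(p^e-1)D\rceil$ and $\lceil t(p^e-1)\rceil$ differ from $(p^e-1)D$ and $t(p^e-1)$ by $O(1)$, each defining inequality is perturbed by $O(1/p^e)$, so $\dim_k(R/I_e)=\#\bigl(p^e\PDa\cap M\bigr)+O(p^{(n-1)e})$.

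Dividing by $p^{ne}$ and letting $e\to\infty$, the error term vanishes and $\#(p^e\PDa\cap M)/p^{ne}\to\Vol(\PDa)$ by the standard estimate for lattice points in dilates of a bounded region with measure-zero boundary ($\Vol$ being normalized so that a fundamental domain of $M$ has volume one, as in Theorem \ref{AffineToricFSignature}); this gives $s(R,D,\mathfrak{a}^t)=\Vol(\PDa)$.

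The main obstacle is the second step: determining \emph{exactly} which monomials escape $I_e$. One must translate the condition that a splitting of $F^e_*R(D_e)$ factor through $\mathfrak{a}^{\lceil t(p^e-1)\rceil}$ into a precise system of polyhedral inequalities and verify that the region it cuts out coincides with $\PDa$ on the nose --- not with some slightly larger or smaller polytope --- up to the harmless $O(1/p^e)$ fuzz at the boundary. Bookkeeping for the interaction of the two ceiling functions and for the half-open versus closed nature of the inequalities is the delicate point; once this combinatorial identification is secured, the remaining volume computation is routine and parallels the pair case.
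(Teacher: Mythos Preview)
Your overall strategy is right and matches the paper's, but you have misidentified where the real work lies. The ceiling functions and the half-open versus closed inequalities are \emph{not} the delicate point; those are disposed of at the outset via Lemma~\ref{ApproxTriples} (continuity in the coefficients of $D$ and in $t$, and replacing $\mathfrak{a}^{\lceil t(p^e-1)\rceil}$ by its integral closure). Once that housekeeping is done, Lemma~\ref{TriplesInterp} gives $I_e^{\mathfrak{a}}=(I_e^D:\mathfrak{a}^{p^et})$, and the pairs computation (Theorem~\ref{FSignatureOfPairs}) tells you exactly which monomials lie in $I_e^D$.

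The actual obstacle is this: unwinding the colon ideal, a monomial $\chi^{\vec u}$ escapes $I_e^{\mathfrak{a}}$ precisely when there exists a \emph{lattice point} $\vec w\in t\cdot\Newt(\mathfrak{a})\cap\tfrac{1}{p^e}M$ with $\tfrac{\vec u}{p^e}+\vec w\in P_\sigma^D$. Hence
\[
a_e^{\mathfrak{a}}=\#\Bigl(\bigl((P_\sigma^D\cap\tfrac{1}{p^e}M)-(t\cdot\Newt(\mathfrak{a})\cap\tfrac{1}{p^e}M)\bigr)\cap\sigma^{\vee}\Bigr),
\]
a count of lattice points in a \emph{discrete} Minkowski sum. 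This is not the same set as $\PDa\cap\tfrac{1}{p^e}M$, which is what you would need for your claimed identification ``$\dim_k(R/I_e)=\#(p^e\PDa\cap M)+O(p^{(n-1)e})$'' to be immediate. A lattice point of $P_\sigma^D-t\cdot\Newt(\mathfrak{a})$ need not decompose as a difference of lattice points of the two summands, and controlling this discrepancy is not a matter of boundary fuzz from ceilings --- it is a separate combinatorial estimate. The paper devotes an entire subsection (Lemmas~\ref{PolytopeLatticePoints}--\ref{SetDifferences}, culminating in Lemma~\ref{TriplesErrorCorrect}) to showing that the difference between the discrete Minkowski sum and the lattice points of the continuous Minkowski sum is confined to an $O(1/p^e)$-neighborhood of $\partial\PDa$, hence contributes $O(p^{(n-1)e})$ points. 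Your sketch does not address this, and the phrase ``forces $\vec w$ to lie sufficiently deep in the Newton polyhedron'' suggests you have not yet seen that the relevant region is a Minkowski \emph{difference} rather than an intersection.
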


\begin{CorGorensteinTriples}
Let $R$ be the coordinate ring of an affine toric variety, $D$ a divisor on $\Spec R$, and $\mathfrak{a}$ a monomial ideal, presented as in Theorem \ref{FSignatureOfTriples}. Suppose that the pair $(R, D)$ is $\mathbb{Q}$-Gorenstein. Then $s(R, D, \mathfrak{a}^t) = \Vol(P_{\sigma}^D\cap t\cdot \Newt(\mathfrak{a}))$.
\end{CorGorensteinTriples}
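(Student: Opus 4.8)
The plan is to derive Corollary \ref{GorensteinTriples} from Theorem \ref{FSignatureOfTriples} by showing that, under the $\mathbb{Q}$-Gorenstein hypothesis on the pair $(R,D)$, the polytope $\PDa$ coincides with $P_{\sigma}^D \cap t\cdot\Newt(\mathfrak{a})$, whence the two volumes agree. So the entire content of the proof is an unwinding of Definitions \ref{PDDef} and \ref{PaDef}. First I would recall that $P_{\sigma}^D$ is cut out by the inequalities $\vec w\cdot\vec v_i \le a_i$ for the divisor $D = \sum a_i D_i$ (shifting the defining inequalities of $P_{\sigma}$ by the coefficients of $D$), while $\PDa$ is built by additionally intersecting a translate of $P_{\sigma}^D$ with a dilate of the Newton polyhedron of $\mathfrak{a}$ in the way dictated by the construction in Definition \ref{PaDef}; in general these are \emph{not} simply the intersection $P_{\sigma}^D\cap t\Newt(\mathfrak a)$ because the relevant polytopes are "sheared" by the failure of $D$ to be $\mathbb{Q}$-Cartier.

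The key step is to see exactly how the $\mathbb{Q}$-Gorenstein condition rectifies this. Being $\mathbb{Q}$-Gorenstein for the pair $(R,D)$ means $K_X + D$ is $\mathbb{Q}$-Cartier, equivalently that there is a single linear functional $\vec m_\sigma \in M_{\mathbb{Q}}$ with $\vec m_\sigma\cdot\vec v_i = a_i$ (after absorbing $K_X$, i.e.\ using the $-1$'s from the canonical divisor) for all the ray generators $\vec v_i$ of $\sigma$. Geometrically this says the "floor" defining $P_{\sigma}^D$ is a single affine hyperplane rather than a genuinely polyhedral boundary, so that translating by $\vec m_\sigma$ turns the defining data of $P^D_\sigma$ into that of the plain $P_\sigma$ and, crucially, makes the construction in Definition \ref{PaDef} collapse: the fibered/convex-hull operation that produces $\PDa$ degenerates to an honest intersection. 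I would make this precise by translating everything by $\vec m_\sigma$, checking that the inequalities defining $\PDa$ become exactly "lies in $P_\sigma^D$" together with "lies in $t\Newt(\mathfrak a)$," and noting that translation preserves Euclidean volume so the volume formula from Theorem \ref{FSignatureOfTriples} is unaffected.

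Concretely, the steps in order: (1) write down the inequality description of $\PDa$ from Definition \ref{PaDef} and of $P_\sigma^D$ from Definition \ref{PDDef}; (2) invoke the $\mathbb{Q}$-Gorenstein hypothesis to produce the functional $\vec m_\sigma$ with $\vec m_\sigma\cdot \vec v_i = a_i$ for all $i$; (3) perform the substitution $\vec w \mapsto \vec w + \vec m_\sigma$ (or $\vec w - \vec m_\sigma$, depending on sign conventions in the definitions) and verify term-by-term that the resulting region is $P_\sigma^D\cap t\Newt(\mathfrak a)$; (4) conclude $\Vol(\PDa) = \Vol(P_\sigma^D\cap t\Newt(\mathfrak a))$ and apply Theorem \ref{FSignatureOfTriples}. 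I expect the main obstacle to be purely bookkeeping: matching the sign and normalization conventions in Definitions \ref{PDDef}, \ref{PaDef}, and in the definition of $\Newt(\mathfrak a)$ so that the translation lands everything correctly, in particular handling the implicit role of $K_X$ (the vector $(1,\ldots,1)$ in the ray coordinates) inside the $\mathbb{Q}$-Gorenstein condition. Once the dictionary is set up correctly, the identity of the two polytopes — and hence of their volumes — is immediate.
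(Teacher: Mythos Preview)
Your plan has the right shape---deduce the corollary from Theorem \ref{FSignatureOfTriples} by exhibiting a volume-preserving identification between $\PDa$ and $P_\sigma^D\cap t\cdot\Newt(\mathfrak a)$---but the specific map you propose, a translation by some $\vec m_\sigma$, will not work, and your description of the polytopes is off. Recall from Definition \ref{PDDef} that $P_\sigma^D=\{\vec u:0\le\vec u\cdot\vec v_i<1-a_i\ \forall i\}$ (not $\vec u\cdot\vec v_i\le a_i$), and from Definition \ref{PaDef} that $\PDa=(P_\sigma^D - t\cdot\Newt(\mathfrak a))\cap\sigma^\vee$ is a Minkowski \emph{difference} intersected with the cone, not a shear or a fibered construction. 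A translation cannot convert this Minkowski difference into an intersection: already for $R=k[x,y]$, $D=0$, $\mathfrak a=(x,y)$, $t=\tfrac12$ one computes $\PDa=\{0\le x,y<1,\ x+y<\tfrac32\}$ (a square with a corner cut near $(1,1)$) while $P_\sigma^D\cap t\cdot\Newt(\mathfrak a)=\{0\le x,y<1,\ x+y\ge\tfrac12\}$ (corner cut near the origin). These have the same volume but are not translates of each other, so step (3) of your plan fails.

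What the $\mathbb{Q}$-Gorenstein hypothesis actually provides (via Lemma \ref{QGorensteinPair}) is a vector $\vec w\in M_{\mathbb Q}$ with $\vec w\cdot\vec v_i=1-a_i$ for every $i$, and the correct map is the affine \emph{reflection} $\phi:\vec u\mapsto\vec w-\vec u$. One checks that $\phi$ carries $P_\sigma^D$ to itself (the inequalities $0\le\vec u\cdot\vec v_i<1-a_i$ are symmetric under $\vec u\mapsto\vec w-\vec u$), and then that $\phi(\PDa)=P_\sigma^D\cap t\cdot\Newt(\mathfrak a)$: writing $\vec u=\vec x-\vec y\in\PDa$ with $\vec x\in P_\sigma^D$, $\vec y\in t\cdot\Newt(\mathfrak a)$, one has $\vec w-\vec u=\vec y+(\vec w-\vec x)$ with $\vec w-\vec x\in\sigma^\vee$, and the Newton polyhedron is stable under adding elements of $\sigma^\vee$. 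Since $\phi$ is an isometry the volumes agree. So the missing idea is that a reflection, not a translation, is what exchanges ``Minkowski-subtract-then-intersect'' with ``intersect''; this is exactly the argument the paper gives.
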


I would like to thank my advisor, Karen Smith, for providing guidance during my work on this paper; Kevin Tucker for his insight into $F$-signature of pairs and triples, and for providing a proof of Lemma \ref{TriplesInterp}; and Julian Rosen for several useful discussions which led to a first proof of Lemma \ref{PCCharacterization}. (The proof given here is inspired by \cite{Wat99}.)

This work was partially supported by NSF grant DMS-0502170.

\section{Preliminaries}

\subsection{F-signature}

We recall the definition of $F$-signature. Let $R$ be a ring containing a field $k$ of characteristic $p>0$. Let $F^e_*R$ be the $R$-module whose underlying abelian group is $R$ and whose $R$-module structure is given by Frobenius: for $r\in R, s\in F^e_*R$, $r\cdot s = r^{p^e}s$. If $R$ is reduced, it's easy to see that $F^e_*R$ is isomorphic to $R^{1/p^e}$, the $R$-module of $p^e$th roots of elements of $R$. (This also gives $F^e_*R$ a natural ring structure.) Recall that $R$ is said to be \emph{$F$-finite} if $F_*R$ is a finitely generated $R$-module. (For example, every finitely generated algebra over a perfect field is $F$-finite.)

\begin{rem}\label{RingAssumptions}(Conventions.)
In what follows, all rings are assumed to be Noetherian $F$-finite domains containing a field $k$ of prime characteristic $p>0$. We assume that $k$ is perfect unless stated otherwise (though by Remark \ref{ImperfectResidueField}, this assumption is mostly without loss of generality). Moreover, all rings are either local with residue field $k$; graded over $\mathbb{Z}^n$ for some $n$ with each graded piece isomorphic to $k$; or graded over $\mathbb{N}$ with zeroeth graded piece is equal to $k$.
\end{rem}

\begin{defn}
Let $R$ be a Noetherian local ring or an $\mathbb{N}$-graded ring with zeroeth graded piece a equal to a field. Let $M$ be a finitely generated $R$-module (which is assumed to be graded if $R$ is graded), and consider a decomposition of $M$ as a direct sum of indecomposable $R$-modules. The \emph{free rank} of $M$ as an $R$-module is the number of copies of $R$ in this direct sum decomposition.
\end{defn}

\begin{rem}
In general (if $R$ is not local or $\mathbb{N}$-graded over a field), the free rank of a module is not well-defined. In the local or $\mathbb{N}$-graded setting, however, free rank is uniquely determined (see, e.g., Remark \ref{IeDef} and Lemma \ref{GradedToLocal}).
\end{rem}

\begin{defn}\label{FSignatureDefinition}\cite{HL04}
Let $R$ be a ring (either local or graded, as described above) of dimension $d$. Let $\alpha = \log_p [k^p:k]<\infty$. For each $e\in\mathbb{N}$, let $a_e$ be the free rank of $R^{1/p^e}$ as an $R$-module, so that $F^e_*R = R^{\oplus a_e}\oplus M_e$, with no copies of $R$ in the direct sum decomposition of $M_e$. We define the \emph{$F$-signature} of $R$ to be the limit $$s(R) = \lim_{e\to\infty} \frac{a_e}{p^{e(d+\alpha)}}.$$
\end{defn}

\begin{rem}
Tucker \cite{Tuc10} showed that the limit given in Definition \ref{FSignatureDefinition} exists when $R$ is a local ring. (It follows from Lemma \ref{GradedToLocal}.\ref{GradedEqualsLocal} that $F$-signature is well-defined when $R$ is $\mathbb{N}$-graded as in Remark \ref{RingAssumptions}.)
\end{rem}

For the sake of simplicity, we will confine ourselves to the case of perfect $k$ at first, so that $s(R) = \lim_{e\to\infty} \frac{a_e}{p^{ed}}$. After we have proved our main results in the perfect case, it will not be difficult to extend them to the case $[k^p:k]<\infty$.

\begin{rem}\label{IeDef}
The $F$-signature of a local ring $(R, m, k)$ may also be characterized as follows (see \cite{Tuc10}): define $I_e\subset R$ to be the ideal $\{r\in R\suchthat \forall \phi\in \Hom_R(R^{1/p^e}, R), \phi(r^{1/p^e})\in m\}$. In other words, $I_e$ is the ideal of elements of $R$ whose $p^e$th roots do not generate a free summand of $R^{1/p^e}$. Then $a_e = l(F_*^e(R/I_e))$, so $s(R) = \lim_{e\to\infty} \frac{l(F_*^e(R/I_e))}{p^{e(d+\alpha)}}$.

Since $l(F^e_*M) = [k^p:k]^e l(M)$, we arrive at the following definition of $F$-signature, which does not depend on $\alpha$: $$s(R) = \lim_{e\to\infty} \frac{l(R/I_e)}{p^{ed}}.$$
\end{rem}

\subsection{Affine toric varieties}\label{ToricVarietiesSection}

Here, we present enough background on toric varieties to prove Theorem \ref{AffineToricFSignature}. Almost all notation is standard as in Fulton's book \cite{Ful93}, which the reader may consult for further details.

A toric variety $X$ may be defined as a normal variety which contains an algebraic torus $T=\Spec k[x_1,x_1^{-1},\ldots, x_n, x_n^{-1}]$ as an open dense subset, so that the action of $T$ on itself extends to an action of $T$ on $X$. Toric varieties can be presented in terms of simple combinatorial data, making algebro-geometric computations easier on toric varieties.

Let $N$ be a free abelian group of rank $n$. Let $M = N^*=\Hom_{\mathbb{Z}}(N, \mathbb{Z})$ the dual group to $N$. Consider $M$ as a lattice, called the \emph{character lattice}, in the $\mathbb{R}$-vector space $M_{\mathbb{R}} := M\otimes_{\mathbb{Z}}\mathbb{R}$. Let $k[M]$ be the semigroup ring on $M$, so that up to non-canonical isomorphism, $k[M]\simeq k[x_1^{\pm 1},\ldots, x_n^{\pm 1}]$ is the coordinate ring of an ``algebraic torus." Elements of the semigroup $M$ are called \emph{characters} but may also be thought of as exponents; the inclusion of abelian groups $\chi: M^+\into (k[M])^{\times}$ is called the \emph{exponential map} and is written $m\mapsto \chi^m$. Elements $\chi^m\in k[M]$ are called \emph{monomials}. In this paper, a \emph{monomial ring} $R$ is a $k$-subalgebra of $k[M]$, finitely generated by monomials: $R = k[S]$, where $\chi^S$ is the set of monomials in $R$. Of course, the set of monomials in $R$ forms a semigroup under multiplication which is naturally isomorphic to $S$. We denote by $L=\Lattice(S)$ the (additive) subgroup of $M$ generated by $S$, which is isomorphic under the exponential map to the (multiplicative) group of monomials in $\Frac(R)$.

In what follows, let $\sigma\subset N_{\mathbb{R}}$ be a \emph{strongly convex rational polyhedral cone}. By \emph{rational polyhedral cone} we mean that $\sigma$ is the cone of vectors $\{\sum_i a_i\vec{v}_i\suchthat 0\leq a_i\in\mathbb{R}\}$, where $\vec{v}_i\in M$ are a collection of finitely many \emph{generators} for $\sigma$. Moreover, we require that $\sigma$ be \emph{strongly convex}: that is, if $0\neq\vec{v}\in \sigma$ then $-\vec{v}\notin \sigma$.

A minimal set of generators of a cone is uniquely determined up to rescaling. (For each $i$, $\mathbb{R}_{\geq 0}\cdot\vec{v}_i$ is a ray which forms one edge of the cone $\sigma\subset N_{\mathbb{R}}$.) It is often useful to take the vectors $\vec{v}_i$ to be \emph{primitive generators}: that is, we replace each $\vec{v}_i$ with the shortest vector in $N$ that lies on the same ray. The primitive generators of $\sigma$ are themselves uniquely determined.

A \emph{face} of $\sigma$ is $F = \sigma\cap H$, where $H\subset\mathbb{R}^n$ is a hyperplane that only intersects $\sigma$ on its boundary $\partial \sigma$. (Equivalently, $H = \vec{w}^{\perp}$, where $\vec{w}\cdot \vec{v}\geq 0$ for all $\vec{v}\in \sigma$. Such $H$ is called a \emph{supporting hyperplane}.) A codimension-one face is called a \emph{facet}. As is (hopefully) intuitively clear, one can show that the union of the facets of $\sigma$ is equal to the boundary of the cone, $\partial \sigma$. Every face of $\sigma$ is itself a strongly convex rational polyhedral cone, whose generators are a subset of the generators of $\sigma$.

A strongly convex rational polyhedral cone $\sigma$ in the vector space $V$ has a \emph{dual cone}, $\sigma^{\dual} = \{\vec{u}\in V^*\suchthat \vec{u}\cdot \vec{v}\geq 0 \forall \vec{v}\in \sigma\}$. It's a basic fact of convex geometry that $\sigma^{\dual}$ is also a rational polyhedral cone, and that $\sigma = (\sigma^{\dual})^{\dual}$. 



Now we define affine toric varieties in the language of cones. Every affine toric variety may be presented in the following form:

\begin{defn}
Let $N$ be an $n$-dimensional lattice, $N\subset N_{\mathbb{R}} = N\otimes_{\mathbb{Z}}\mathbb{R}$. Let $M = N^*$, and $M_{\mathbb{R}} = M\otimes\mathbb{R}$. Let $\sigma\subset N_{\mathbb{R}}$ be a strongly convex rational polyhedral cone, and $S = \sigma^{\dual}\cap M$, where $\sigma^{\dual}\subset M_{\mathbb{R}}$ is the dual cone to $\sigma$. Let $R = k[S]$. The affine toric variety corresponding to $\sigma$ is defined to be $X = \Spec R$.
\end{defn}

\begin{rem}
Only normal monomial rings arise as the coordinate rings of toric varieties. Since strongly $F$-regular rings are normal, there will be no loss of generality in restricting our $F$-signature computations to only those monomial rings arising from toric varieties. (If a monomial ring does not arise in this fashion, it is not normal, hence not strongly $F$-regular, so we already know that its $F$-signature is zero.)
\end{rem}

It will be convenient, during our $F$-signature computations, to temporarily assume that the cone $\sigma$ defining our toric variety $X_\sigma$ is full-dimensional. Equivalently, we assume that our toric variety contains no \emph{torus factors}, i.e, is not the product of two lower-dimensional toric varieties, one of which is a torus. The following (easily checked) facts about products of cones will allow us to reduce to the case of a toric variety with no torus factors:

\begin{lem}\label{ConeProduct}(\cite{CLS11}, Proposition 3.3.9)
Let $X = \Spec R$ be the affine toric variety corresponding to the cone $\sigma$, so that $R = k[\sigma^{\dual}\cap M]$. Let $N'_{\mathbb{R}}\subset N_{\mathbb{R}}$ be the vector subspace spanned by $\sigma$. Let $N' = N'_{\mathbb{R}} \cap N$. Let $\sigma'$ be $\sigma$, viewed as a full-dimensional cone in $N'_{\mathbb{R}}$. Let $N''=N/N'$. Then $X \simeq X'\times T_{N''}$, where $X'$ is the affine toric variety (with no torus factors) corresponding to $\sigma'$ and $T_{N''} = \Spec k[M'']$ is an algebraic torus.
\end{lem}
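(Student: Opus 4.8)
The plan is to reduce the statement to a splitting of a lattice exact sequence followed by a direct-sum decomposition of the defining semigroup. First I would check that $N' = N'_{\mathbb{R}}\cap N$ is a \emph{saturated} sublattice of $N$: if $v\in N$ and $cv\in N'$ for some positive integer $c$, then $cv\in N'_{\mathbb{R}}$, hence $v\in N'_{\mathbb{R}}$ (a subspace), hence $v\in N'_{\mathbb{R}}\cap N = N'$. Consequently $N'' = N/N'$ is torsion-free, so it is a lattice of rank $n - \dim\sigma$, and the exact sequence $0\to N'\to N\to N''\to 0$ splits. Fixing a splitting gives $N\cong N'\oplus N''$; dualizing yields $M = N^\ast\cong M'\oplus M''$, where $M' = (N')^\ast$, $M'' = (N'')^\ast$, and under this identification $M''$ is precisely the subgroup $\{u\in M : u\cdot v = 0\ \forall v\in N'_{\mathbb{R}}\}$ of characters that vanish on $N'_{\mathbb{R}}$.

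Next I would compute $\sigma^{\dual}$ in these coordinates. Since $\sigma\subset N'_{\mathbb{R}}$, any character $u = u' + u''$ (with $u'\in M'_{\mathbb{R}}$, $u''\in M''_{\mathbb{R}}$) pairs with $v\in\sigma$ as $u\cdot v = u'\cdot v$, because $u''$ kills $N'_{\mathbb{R}}$. Hence $u\in\sigma^{\dual}$ if and only if $u'\in(\sigma')^{\dual}$, the dual cone computed inside $M'_{\mathbb{R}}$, with no constraint on $u''$; that is, $\sigma^{\dual} = (\sigma')^{\dual}\oplus M''_{\mathbb{R}}$ as a direct sum of cones. Intersecting with $M = M'\oplus M''$ gives $S = \sigma^{\dual}\cap M = S'\oplus M''$, where $S' = (\sigma')^{\dual}\cap M'$ is the semigroup defining $X'$.

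It then follows formally that $R = k[S] = k[S'\oplus M'']\cong k[S']\otimes_k k[M''] = k[X']\otimes_k k[M'']$, so $X = \Spec R\cong X'\times\Spec k[M''] = X'\times T_{N''}$; this isomorphism is equivariant, since the big torus $\Spec k[M]$ of $X$ factors as $T_{N'}\times T_{N''}$ with $T_{N'}$ the torus of $X'$. Finally, $X'$ has no torus factors because $\sigma'$ is full-dimensional in $N'_{\mathbb{R}}$: if $X'$ had a torus factor, applying the same construction to $X'$ would place $\sigma'$ inside a proper subspace of $N'_{\mathbb{R}}$; equivalently, full-dimensionality of $\sigma'$ forces $(\sigma')^{\dual}$ to be strongly convex, so $k[X']$ has no nonconstant units.

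The computation is routine — as the paper says, "easily checked." The one step that genuinely matters is the saturatedness of $N'$ in $N$: this is exactly what makes $N''$ torsion-free, which in turn is what guarantees that the defining sequence splits and that $M$ decomposes as an honest direct sum of lattices. Without it, $N''$ could acquire torsion and the clean semigroup identity $S = S'\oplus M''$ in the second step would break down.
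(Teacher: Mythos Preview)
Your proof is correct and is the standard argument; note that the paper does not actually supply its own proof of this lemma, but simply cites \cite{CLS11}, Proposition 3.3.9 and calls the fact ``easily checked,'' so there is nothing to compare against beyond observing that you have filled in exactly the routine details the paper omits.
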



On a similar note, we see that full-dimensionality is a dual property to strong convexity:

\begin{lem}\label{FullDimensionalToStronglyConvex} (\cite{Ful93}, \S 1.2)
A (rational polyhedral) cone is full-dimensional if and only if its dual cone $\sigma^{\dual}$ is strongly convex. (Or, equivalently, if and only if $\vec{0}$ is the only unit in $\sigma^{\dual}\cap M$.)
\end{lem}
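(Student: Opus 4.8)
The plan is to deduce everything from one elementary identity. Write $\sigma^{\perp}=\{\vec u\in M_{\mathbb R}\suchthat \vec u\cdot\vec v=0\ \text{for all }\vec v\in\sigma\}$ for the annihilator of $\sigma$ in $M_{\mathbb R}=(N_{\mathbb R})^*$. Then
\[
\sigma^{\dual}\cap(-\sigma^{\dual})=\sigma^{\perp},
\]
since $\vec u\in\sigma^{\dual}\cap(-\sigma^{\dual})$ says exactly that $\vec u\cdot\vec v\geq 0$ and $\vec u\cdot\vec v\leq 0$ for every $\vec v\in\sigma$. I would also use the standard fact that for any convex cone $C$, the set $C\cap(-C)$ is the largest linear subspace contained in $C$ (it is visibly closed under negation and, being a cone, under addition and nonnegative scaling, hence a subspace; and any subspace inside $C$ lands in $C\cap(-C)$); in particular $C$ is strongly convex if and only if $C\cap(-C)=\{\vec 0\}$.

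Granting these, the main equivalence is a short chain. Applying the second fact to $C=\sigma^{\dual}$ and then the displayed identity, $\sigma^{\dual}$ is strongly convex $\iff$ $\sigma^{\dual}\cap(-\sigma^{\dual})=\{\vec 0\}$ $\iff$ $\sigma^{\perp}=\{\vec 0\}$. On the other hand, a nonzero linear functional vanishing on $\sigma$ exists precisely when the linear span of $\sigma$ is a proper subspace of $N_{\mathbb R}$, so $\sigma^{\perp}=\{\vec 0\}$ $\iff$ $\operatorname{span}(\sigma)=N_{\mathbb R}$ $\iff$ $\sigma$ is full-dimensional. Concatenating these gives ``$\sigma$ full-dimensional $\iff$ $\sigma^{\dual}$ strongly convex.'' (If one wants the symmetric statement as well, one applies this to $\sigma^{\dual}$ in place of $\sigma$ and uses $(\sigma^{\dual})^{\dual}=\sigma$.)

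For the parenthetical characterization: an element $\vec m\in S=\sigma^{\dual}\cap M$ is a unit (equivalently, $\chi^{\vec m}$ is a unit of $k[S]$) if and only if $-\vec m$ also lies in $S$, i.e. if and only if $\vec m\in\bigl(\sigma^{\dual}\cap(-\sigma^{\dual})\bigr)\cap M=\sigma^{\perp}\cap M$. Thus $\vec 0$ is the only unit iff $\sigma^{\perp}\cap M=\{\vec 0\}$. Here I would invoke rationality: because $\sigma$ is a rational cone, $\sigma^{\perp}$ is cut out by rational linear equations, hence is a rational subspace of $M_{\mathbb R}$, and a rational subspace is trivial iff it contains no nonzero lattice point. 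Therefore $\sigma^{\perp}\cap M=\{\vec 0\}\iff\sigma^{\perp}=\{\vec 0\}$, which by the previous paragraph is equivalent to $\sigma$ being full-dimensional.

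The argument is essentially bookkeeping with the definitions together with the two cited convex-geometry facts; the only step that is not purely formal is the last one, where the hypothesis that $\sigma$ is rational is genuinely used to pass between $\sigma^{\perp}=\{\vec 0\}$ and $\sigma^{\perp}\cap M=\{\vec 0\}$ (for a general real subspace the lattice condition would be strictly weaker). So I would regard that rationality point as the ``main obstacle,'' such as it is.
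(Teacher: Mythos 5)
Your argument is correct: the identity $\sigma^{\dual}\cap(-\sigma^{\dual})=\sigma^{\perp}$, the characterization of strong convexity via $C\cap(-C)=\{\vec 0\}$, and the rationality point needed to pass between $\sigma^{\perp}=\{\vec 0\}$ and $\sigma^{\perp}\cap M=\{\vec 0\}$ are all handled properly. The paper does not prove this lemma itself but merely cites Fulton \S 1.2, and your proof is exactly the standard argument given there, so there is nothing further to compare.
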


The following fact will also be useful later. It says that the group $\Lattice(S)$ generated by the semigroup $S$ is equal to the character lattice $M$.

\begin{lem}\label{ConeLattice}
Let $N$ be a rank-$n$ lattice, $M = N^*$, $\sigma\subset N_{\mathbb{R}}$ a strongly convex rational polyhedral cone, and $S = \sigma^{\dual}\cap M$ (so that $\Spec k[S]$ is the affine toric variety corresponding to $\sigma$). Then $\Lattice(S) = M$. More generally, if $L'\subset M_{\mathbb{R}}$ is any $n$-dimensional lattice, $\sigma^{\dual}\subset M_{\mathbb{R}}$ any $n$-dimensional cone, and $S = \sigma^{\dual}\cap L'$, then $\Lattice(S) = L'$, that is, $L'$ is the group generated by the semigroup $S$.
\end{lem}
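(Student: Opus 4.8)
The plan is to prove the more general statement directly: given an $n$-dimensional lattice $L'\subset M_{\mathbb{R}}$ and an $n$-dimensional (i.e.\ full-dimensional) cone $\tau\subset M_{\mathbb{R}}$, with $S = \tau\cap L'$, we want $\Lattice(S) = L'$. The containment $\Lattice(S)\subseteq L'$ is immediate since $S\subseteq L'$ and $L'$ is a group. For the reverse containment, the key geometric observation is that since $\tau$ is full-dimensional, it contains a nonempty open subset $U$ of $M_{\mathbb{R}}$; in particular $\tau$ contains an $\mathbb{R}$-basis $\vec{b}_1,\dots,\vec{b}_n$ of $M_{\mathbb{R}}$ which we may take to lie in $L'$ (rescale lattice points found inside $U$, or simply note that $L'$-points are dense). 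I would then show that the sublattice $\Lambda := \Lattice(S)$ has finite index in $L'$: indeed $\Lambda$ contains the $n$ linearly independent vectors $\vec{b}_i$, hence is itself a rank-$n$ lattice, and any rank-$n$ sublattice of $L'$ has finite index.

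The heart of the argument is then to upgrade ``finite index'' to ``equal.'' Let $d = [L' : \Lambda] < \infty$. Pick any $\vec{m}\in L'$; I want to show $\vec{m}\in\Lambda$. The trick is translation by a deep interior point: choose a lattice point $\vec{p}\in S$ lying in the \emph{relative interior} of $\tau$ (possible since $\tau$ is full-dimensional, so its interior is nonempty and contains lattice points of $L'$). For a sufficiently large integer $N$, the point $N\vec{p} + \vec{m}$ lies in $\tau$ as well — this is because $N\vec{p}$ is deep in the interior cone and $\vec{m}$ is a fixed perturbation, so $N\vec{p}+\vec{m}\in\tau$ once $N$ is large (formally: write $\vec{p}$ in coordinates adapted to the facet-normals of $\tau$, each of which evaluates strictly positively on $\vec{p}$; then the facet-normals evaluate positively on $N\vec{p}+\vec{m}$ for $N\gg 0$). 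Hence $N\vec{p}+\vec{m}\in S$, so $N\vec{p}+\vec{m}\in\Lambda$; since also $\vec{p}\in S\subseteq\Lambda$, we get $\vec{m} = (N\vec{p}+\vec{m}) - N\vec{p}\in\Lambda$. As $\vec{m}\in L'$ was arbitrary, $L'\subseteq\Lambda$, completing the proof. The first assertion, $\Lattice(\sigma^{\dual}\cap M) = M$, is the special case $L' = M$, $\tau = \sigma^{\dual}$, which is full-dimensional precisely because $\sigma$ is strongly convex (Lemma \ref{FullDimensionalToStronglyConvex}).

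The main obstacle — really the only subtle point — is making precise that ``$N\vec p + \vec m\in\tau$ for $N$ large'': one needs to know $\tau$ has a finite description by facet inequalities $\{\vec x : \langle \vec u_j,\vec x\rangle \ge 0\}$ with $\langle \vec u_j,\vec p\rangle > 0$ for all $j$ (which holds exactly because $\vec p$ is in the interior), and then $\langle \vec u_j, N\vec p + \vec m\rangle = N\langle\vec u_j,\vec p\rangle + \langle\vec u_j,\vec m\rangle \ge 0$ once $N \ge \max_j \big(-\langle\vec u_j,\vec m\rangle / \langle\vec u_j,\vec p\rangle\big)$. Everything else is bookkeeping about finite-index sublattices. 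An alternative, slightly slicker route avoids the index computation entirely: it suffices to exhibit, for each $\vec m\in L'$, two elements of $S$ whose difference is $\vec m$, and the translation trick above does exactly that; so one can skip the finite-index discussion and present only the interior-point translation argument.
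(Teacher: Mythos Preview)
The paper states Lemma~\ref{ConeLattice} without proof (it is a standard fact about affine semigroups, implicit in \cite{Ful93}), so there is no ``paper's own proof'' to compare against. Your argument is correct and is the standard one: pick an interior lattice point $\vec{p}\in S$, and for arbitrary $\vec{m}\in L'$ write $\vec{m} = (N\vec{p}+\vec{m}) - N\vec{p}$ with both terms in $S$ for $N\gg 0$. Two small comments. First, your parenthetical ``$L'$-points are dense'' is false as stated---lattice points are discrete, not dense---but what you actually need (and what holds) is that the interior of a full-dimensional cone, being open and closed under positive scaling, contains arbitrarily large balls and hence lattice points. Second, as you yourself note at the end, the finite-index paragraph is entirely superfluous: the translation trick already gives $L'\subseteq\Lattice(S)$ directly, so you should simply delete that paragraph rather than present it and then retract it.
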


Next, we define a polytope:

\begin{defn}
A \emph{polytope} in $\mathbb{R}^n$ is the convex hull of a finite set of points, which we will call extremal points. Equivalently, a polytope is a bounded set given as the intersection of finitely many closed half-spaces $H = \{\vec{v}\suchthat \vec{v}\cdot\vec{u}\geq 0\}$ (reference), or a bounded set defined by finitely many linear inequalities.
\end{defn}

\begin{rem}
We will abuse notation by allowing polytopes to be intersections of half-spaces which are either open ($H = \{\vec{v}\suchthat \vec{v}\cdot\vec{u}> 0\}$) or closed. (We will compute $F$-signatures to be the volumes of various polytopes. Since the volume of an intersection of half-spaces is the same whether the half-spaces are open or closed, this technicality will not affect our arguments.)
\end{rem}

\section{Toric $F$-Signature Computation}\label{MainResultSection}

\begin{rem}(Conventions.)\label{ToricAssumptions}
For the remainder of this paper, $N$ is a lattice; $M = N^*$ is the dual lattice; $\sigma\subset N$ is a strongly convex rational polyhedral cone; $S = M\cap \sigma^{\dual}$, so that $k[S]$ is the coordinate ring of an affine toric variety in the notation of \cite{Ful93}, and $\vec{v}_1,\ldots, \vec{v}_r$ are primitive generators for $\sigma$. Unless stated otherwise, $k$ is perfect, and $\sigma$ is full-dimensional (i.e., the associated toric variety has no torus factors).
\end{rem}

\subsection{Statement of the main result, and an example}

\begin{defn}
Let $\sigma$ be a cone as in Remark \ref{ToricAssumptions}, with primitive generators $\vec{v}_1,\ldots, \vec{v}_r$. We define $P_{\sigma}\subset \sigma^{\dual}$ to be the polytope $\{\vec{w}\in M_{\mathbb{R}}\suchthat \forall i, 0\leq \vec{w}\cdot \vec{v}_i< 1\}$.
\end{defn}

\begin{thm}\label{AffineToricFSignature}
Let $R$ be the coordinate ring of an affine toric variety $X$ with no torus factors, with the conventions of Remark \ref{ToricAssumptions}. Then $s(R)$ is the volume of $P_{\sigma}$. More generally, suppose $X = X' \times T$, where $X'$ is a toric variety without torus factors and $T$ is an algebraic torus. Let $N'_{\mathbb{R}}\subset N_{\mathbb{R}}$ be the vector subspace spanned by $\sigma$, and let $\sigma'$ be $\sigma$ viewed as a cone in $N'_{\mathbb{R}}$. Then $s(k[X]) = s(k[X'])=\Vol(P_{\sigma'})$.
\end{thm}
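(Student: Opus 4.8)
The plan is to compute the free rank $a_e$ of $R^{1/p^e}$ directly using the monomial structure, then take the limit. First I would reduce to the full-dimensional case: by Lemma~\ref{ConeProduct}, $k[X] \cong k[X'] \otimes_k k[M'']$ where $k[M'']$ is a Laurent polynomial ring. Since a Laurent polynomial ring over $k$ is a localization of a polynomial ring, and more to the point $F^e_*(k[M''])$ is free over $k[M'']$ of rank $p^{e\dim M''}$ (each residue class of exponents mod $p^e$ gives one free generator, and there are no non-free summands because the ring is regular), the free rank of $F^e_*(k[X])$ over $k[X]$ equals $p^{e\dim M''}$ times the free rank of $F^e_*(k[X'])$ over $k[X']$. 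Dividing by $p^{ed}$ with $d = \dim X = \dim X' + \dim M''$, the factor $p^{e\dim M''}$ cancels and we get $s(k[X]) = s(k[X'])$. So it suffices to prove $s(R) = \Vol(P_\sigma)$ when $\sigma$ is full-dimensional.

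For the full-dimensional case, the key observation is that $R^{1/p^e} = k[\tfrac{1}{p^e} S]$ is a module over $R = k[S]$ graded by the lattice $\tfrac{1}{p^e}M$ (or equivalently $M$, after rescaling), and it decomposes as a direct sum of rank-one $R$-submodules indexed by the cosets of $M$ in $\tfrac{1}{p^e}M$ — concretely, by the points $\vec{w} \in \tfrac{1}{p^e}M \cap \sigma^\vee$ modulo translation by $M$. The module $R^{1/p^e}$ is free over $R$ of rank $a_e$, where $a_e$ counts those cosets whose corresponding graded piece $\bigoplus_{\vec{u} \in (\vec{w} + M)\cap \sigma^\vee} k\cdot \chi^{\vec u}$ is isomorphic to $R$ itself as an $R$-module. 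The crucial claim — this is the heart of the argument and what I expect to be the main obstacle — is a clean combinatorial criterion: the coset of $\vec{w}$ contributes a free summand if and only if $\vec{w}$ lies (after reduction mod $M$ into a fundamental domain) in the open polytope $P_\sigma$, i.e. $0 \le \vec{w}\cdot\vec{v}_i < 1$ for all $i$, rescaled by $p^e$. The "only if" direction should follow because if some $\vec w \cdot \vec v_i \ge 1$, the graded piece starting at $\vec w$ is "too small near the facet dual to $\vec v_i$" and cannot be cyclic; the "if" direction requires showing that when all the inequalities are strict, the submodule is genuinely generated by its bottom element $\chi^{\vec w}$, which amounts to checking that $(\vec{w} + S)$ recovers all of $(\vec w + M)\cap \sigma^\vee$ — this uses that $S$ generates $M$ as a group (Lemma~\ref{ConeLattice}) and a careful argument at the facets.

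Granting the claim, we then have $a_e = \#\{\vec{w} \in \tfrac{1}{p^e}M : \vec{w} \in P_\sigma\} = \#(M \cap p^e P_\sigma)$ (after clearing denominators), and the standard lattice-point counting estimate gives
$$
\lim_{e\to\infty} \frac{\#(M \cap p^e P_\sigma)}{p^{ed}} = \Vol(P_\sigma),
$$
since $P_\sigma$ is a bounded region with negligible boundary and $p^e \to \infty$. Combining with the existence of the limit (guaranteed by Tucker's theorem, cited after Definition~\ref{FSignatureDefinition}) yields $s(R) = \Vol(P_\sigma)$, and with the reduction above, $s(k[X]) = s(k[X']) = \Vol(P_{\sigma'})$ in general. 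The one technical wrinkle to handle carefully is matching the grading conventions so that "free summand of $R^{1/p^e}$" corresponds exactly to membership in $P_\sigma$ and not some translate; I would set this up via the ideals $I_e$ of Remark~\ref{IeDef}, identifying $R/I_e$ with the span of monomials $\chi^{\vec u}$, $\vec u \in S$, that are "not $p^e$-th powers times a unit up to the $P_\sigma$ condition," so that $\ell(R/I_e) = \#(M\cap p^e P_\sigma)$ directly.
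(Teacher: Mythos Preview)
Your proposal is correct and follows essentially the same route as the paper: your coset decomposition of $R^{1/p^e}$ is exactly the paper's decomposition into submodules generated by ``related'' monomials (Lemmas \ref{MonomialDecomposition} and \ref{FreeRankFormula}), your ``crucial claim'' characterizing which cosets give free summands is precisely Lemma \ref{PCCharacterization}, and the lattice-point volume limit is Lemma \ref{PolytopeVolume}. Your reduction to the full-dimensional case via Lemma \ref{ConeProduct} and multiplicativity of free rank also matches the paper's argument, which packages that step as the general product formula Theorem \ref{GradedProductFSignature} (together with Lemma \ref{GradedTensorSummand} to ensure no spurious free summands appear after tensoring).
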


We will prove this theorem in Section \ref{MainResultProofSection}. For now, we provide an example computation:

\begin{figure}[ht]\label{fig0}
\subfigure[The cone $\sigma\subset N_{\mathbb{R}}$.]{\label{QuadricConeSigma}
\includegraphics[scale=0.6]{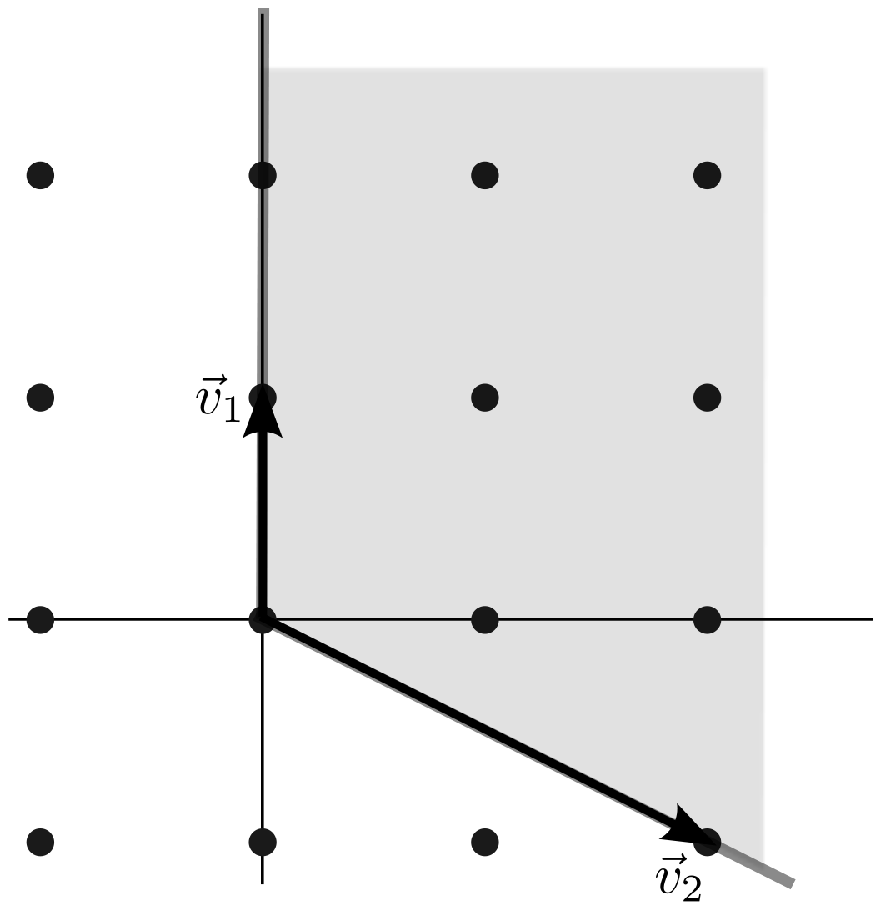}}
\hspace{10pt}
\subfigure[The dual cone $\sigma^{\dual}\subset M_{\mathbb{R}}$.]{\label{QuadricConePC}
\includegraphics[scale=0.6]{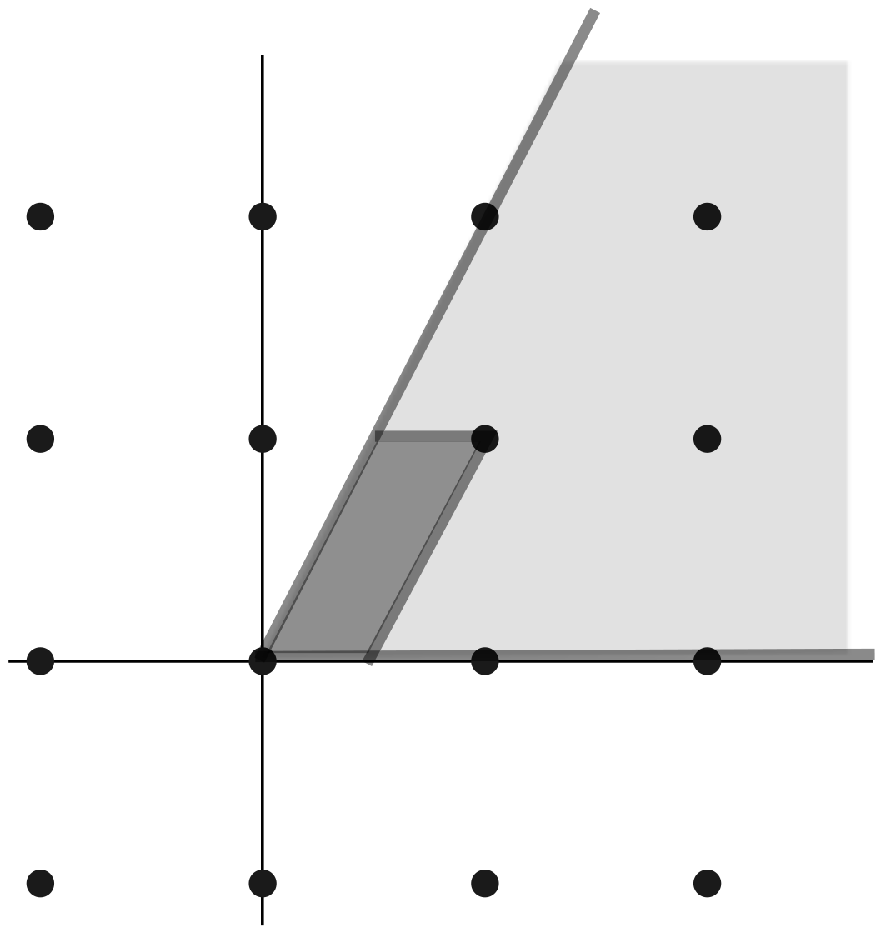}}
\caption{Computing the $F$-signature of the coordinate ring $k[x, xy, xy^2]$ of a quadric cone.}
\end{figure}

\begin{ex}
Figure \ref{QuadricConeSigma} shows the cone $\sigma$ corresponding to a plane quadric $\mathbb{V}(xy-z^2)$, with primitive generators $\vec{v}_1, \vec{v}_2$. Figure \ref{QuadricConePC} shows the dual cone $\sigma^{\dual}$. The coordinate ring $k[\sigma^{\dual}\cap M]$ is $k[x, xy, xy^2]$. In this case, $P_{\sigma}$, shaded in the figure, is the parallelogram $\{\langle x, y\rangle\suchthat 0\leq y<1, 0\leq 2x - y < 1\}$. The $F$-signature is $s(R) = \Vol(P_{\sigma}) = \frac{1}{2}$.
\end{ex}

\subsection{$R$-module Decomposition of $R^{1/q}$}

The main supporting result proved in this section is Lemma \ref{FreeRankFormula}, which gives a formula for the free rank of $R^{1/q}$ as an $R$-module in terms of the number of monomials in $R^{1/q}$ having a certain property. That lemma will be integral to our proof of the main theorem. Lemma \ref{FreeRankFormula} follows immediately from Lemma \ref{MonomialDecomposition}, which describes how $R^{1/q}$ decomposes as a direct sum of indecomposable $R$-module.

We will be able to compute the $F$-signature of a monomial ring $R$ because the $R$-module $R^{1/q}$ has an especially nice graded structure. In particular:

\begin{lem}
Let $R$ be a monomial ring, with $q = p^e$, and with character lattice $M\simeq\mathbb{Z}^n$. Then:
\begin{enumerate}
\item $R^{1/q}$ is finitely generated, as an $R$-module, by $q^{th}$ roots of monomials in $R$ of bounded degree.
\item $R^{1/q}$ admits a natural $\frac{1}{q}M$-grading which respects the $M$-grading on $R\subset R^{1/q}$. Each graded piece of $R^{1/q}$ a one-dimensional $k$-vector space.
\end{enumerate}
\end{lem}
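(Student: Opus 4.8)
The plan is to establish both assertions by making the $\frac{1}{q}M$-grading completely explicit. First I would fix a $\mathbb{Z}$-basis of $M$, identifying $M\simeq\mathbb{Z}^n$ and $\frac{1}{q}M\simeq(\frac{1}{q}\mathbb{Z})^n$; then $R\subset k[M]\subset k[\frac{1}{q}M]$, and the key observation is that $k[\frac{1}{q}M]$ is literally equal to $k[M]^{1/q}$, since taking a $q$th root of a Laurent monomial $\chi^m$ just means passing to $\chi^{m/q}$ (here is where we use that $k$ is perfect, so $k^{1/q}=k$ and no coefficients are introduced). Consequently $R^{1/q}=k[S^{1/q}]$ where $S^{1/q}=\{m/q \mid m\in S\}=\frac{1}{q}S\subset\frac{1}{q}M$, and this ring inherits the semigroup grading by $\frac{1}{q}M$ in which the graded piece indexed by $u\in\frac{1}{q}M$ is $k\cdot\chi^u$ if $u\in\frac{1}{q}S$ and $0$ otherwise. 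This simultaneously gives that each nonzero graded piece is a one-dimensional $k$-vector space and that the $M$-grading on $R=k[S]$ is the restriction (since $S=\frac{1}{q}S\cap M$, using that $S=\sigma^\vee\cap M$ and $\frac{1}{q}S=\sigma^\vee\cap\frac{1}{q}M$).

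Next I would address finite generation and the bounded-degree claim in part (1). As an $R$-module, $R^{1/q}$ is spanned by the monomials $\chi^{u}$, $u\in\frac{1}{q}S$; and $\chi^{u}=\chi^{m}\cdot\chi^{u-m}$ whenever $m\in S$ and $u-m\in\frac{1}{q}S$. So the module generators can be taken among those $u\in\frac{1}{q}S$ for which there is no way to split off a nonzero element of $S$, i.e. those $u$ lying in a fundamental domain for the action of $S$ on $\frac{1}{q}S$. Concretely, since $S$ generates $M$ as a group (Lemma \ref{ConeLattice}) and $\frac{1}{q}M/M$ is finite of order $q^{n}$, the quotient $\frac{1}{q}S/{\sim}$, where $u\sim u'$ iff $u-u'\in S$, is finite: one can choose for each of the $q^n$ cosets of $M$ in $\frac{1}{q}M$ a representative $u$ in $\frac{1}{q}S$ of bounded degree (e.g. lying in $\frac{1}{q}(P_\sigma$-type box $+$ a fixed bounded piece of $S)$), and these representatives, translated by $S$, cover $\frac{1}{q}S$. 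Hence $R^{1/q}$ is generated over $R$ by finitely many $q$th roots of monomials of $R$ of bounded degree — in fact by at most $q^n$ of them, which matches $\mathrm{rank}_R R^{1/q}=[\Frac(R^{1/q}):\Frac(R)]=[\frac{1}{q}M:M]=q^n$.

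The step I expect to be the only real subtlety is the clean identification $R^{1/q}=k[\frac{1}{q}S]$ together with the verification that its semigroup grading refines the one on $R$ — one must check that no monomial of $k[\frac1qM]$ is a $k$-linear combination forced outside the monomial span, which is exactly where perfectness of $k$ enters, and one must check $\frac{1}{q}S\cap M=S$, which follows from $\frac1q S = \sigma^\vee\cap\frac1q M$ by intersecting with $M$. Everything else (finiteness of the index, bounded degree of a transversal, one-dimensionality of graded pieces) is then formal. I would close by remarking that the imperfect-residue-field case is deferred, consistent with the running assumption in Remark \ref{ToricAssumptions}.
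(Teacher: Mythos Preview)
Your treatment of part (2) is correct and essentially identical to the paper's: identify $R^{1/q}$ with $k[\tfrac{1}{q}S]\subset k[\tfrac{1}{q}M]$ and read off the $\tfrac{1}{q}M$-grading, each nonzero piece being $k\cdot\chi^{u}$.

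Your argument for part (1), however, has a real error. You claim that one representative in $\tfrac{1}{q}S$ for each of the $q^n$ cosets of $M$ in $\tfrac{1}{q}M$, translated by $S$, covers $\tfrac{1}{q}S$, and hence that at most $q^n$ generators suffice. This is false: the minimal number of $R$-module generators of $R^{1/q}$ can strictly exceed its rank $q^n$. For the quadric cone $S=\sigma^{\vee}\cap\mathbb{Z}^2$ with $\sigma^{\vee}=\{(x,y):y\ge 0,\ 2x-y\ge 0\}$, the coset $(0,\tfrac{1}{q})+\mathbb{Z}^2$ meets $\tfrac{1}{q}S$ in a set whose $S$-minimal elements include both $(1,\tfrac{1}{q})$ and $(1,1+\tfrac{1}{q})$; their difference $(0,1)$ lies in $M$ but not in $S$, so neither is an $S$-translate of the other. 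A single coset representative cannot generate that piece. (This is exactly the phenomenon that makes the $F$-signature less than $1$: the coset pieces are divisorial ideals, not all principal.) Your relation ``$u\sim u'$ iff $u-u'\in S$'' is also not an equivalence relation, since $S$ is only a semigroup.

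The paper's proof of (1) sidesteps all of this with a one-line division-algorithm argument: if $s_1,\dots,s_m\in S$ are semigroup generators (so the $\chi^{s_i}$ generate $R$ as a $k$-algebra), then any $u=\tfrac{1}{q}\sum a_i s_i\in\tfrac{1}{q}S$ can be written as $\sum\lfloor a_i/q\rfloor s_i+\sum\tfrac{r_i}{q}s_i$ with $0\le r_i<q$; the first sum is in $S$ and the second lies in the finite set $\{\sum\tfrac{r_i}{q}s_i:0\le r_i<q\}$. That set (of size at most $q^m$, not $q^n$) furnishes the bounded-degree monomial generators.
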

\begin{proof}
\begin{enumerate}
\item If $R$ is generated by a finite set of monomials $\tau_i$, we can pick a minimal set of $t$ generators from among $\{\prod_i \tau_i^{a_i/q}\suchthat 0\leq a_i<q\}$.
\item We consider $R$ as a graded subring of the $M$-graded ring $k[M]$. The grading on $R^{1/q}$ is inherited in the obvious way: $\deg (\chi^m)^{1/q} = \frac{1}{q}\deg \chi^m = \frac{1}{q}m$. We conclude that $R^{1/q}$ has a natural $\frac{1}{q}M$-grading. Each graded piece consists of the set of $k$-multiples of a single monomial $\chi^{m/q}$.
\end{enumerate}
\end{proof}

It's well-known that relations on graded modules over monomial rings are generated by so-called ``binomial" relations. We supply a proof here, for lack of a better reference:

\begin{lem}
\label{MonomialModules} Let $W$ be a $G$-graded $R$-module, $G$ an abelian group, with each nonzero graded piece a one-dimensional $k$-vector space. (For example, when $R$ is a monomial subring of $k[x_1,\ldots, x_n]$, $W = R^{1/q}$ is $(\mathbb{Z}/q)^n$-graded.)
\begin{enumerate}
\item We can write $W$ as a quotient of a free module so that the relations are generated by ``binomial" relations, of the form $r\cdot \rho = s\cdot \mu$, for $r, s$ homogeneous elements of $R$ and $\rho, \mu$ homogeneous elements of $W$ such that $\deg r + \deg \rho = \deg s + \deg \mu$. 
\item We will say that two monomials $\rho, \mu\in W$ are \emph{related} if they satisfy a binomial relation. Then being related is an equivalence relation.
\end{enumerate}
\end{lem}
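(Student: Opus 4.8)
The plan is to prove the two parts in sequence, using the fact that every nonzero graded piece of $W$ is a one-dimensional $k$-vector space, so that $W$ is determined up to isomorphism by which scalar multiples of a given monomial are hit by the $R$-action.

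For part (1), I would start with the obvious presentation. Pick a (possibly infinite, but for our applications finite) homogeneous generating set $\{\rho_j\}$ of $W$, and let $F = \bigoplus_j R\cdot e_j$ be the free module with $\deg e_j = \deg \rho_j$, with the surjection $\pi\colon F\onto W$, $e_j\mapsto \rho_j$. I claim the kernel $K$ is generated by elements of the form $r\cdot e_j - s\cdot e_k$ with $r,s$ homogeneous in $R$ and $\deg r + \deg \rho_j = \deg s + \deg \rho_k$ (together with elements $r\cdot e_j$ when $r\cdot \rho_j = 0$, which is the degenerate case $s=0$ of a binomial relation, allowed since the graded piece in degree $\deg r + \deg \rho_j$ is zero). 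Since $K$ is a graded submodule of $F$, it suffices to show each homogeneous element of $K$ is an $R$-combination of such binomials. So take $x = \sum_j r_j e_j \in K$, homogeneous of degree $g$, with each $r_j$ homogeneous of degree $g - \deg\rho_j$. Then $\sum_j \pi(r_j e_j) = \sum_j r_j\rho_j = 0$ in $W_g$. But $W_g$ is one-dimensional over $k$: fixing any index $j_0$ with $r_{j_0}\rho_{j_0}\ne 0$ (if there is none, every $r_j\rho_j=0$ and $x = \sum_j r_j e_j$ is already a sum of degenerate binomials), every $r_j\rho_j$ is a $k$-scalar $c_j$ times $r_{j_0}\rho_{j_0}$, with $\sum_j c_j = 0$. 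Writing $r_j\rho_j = c_j\, r_{j_0}\rho_{j_0}$ exhibits $r_j e_j - c_j r_{j_0} e_{j_0}$ as lying in $K$ and of binomial type (after absorbing the scalar $c_j$ into $r_{j_0}$, which is still homogeneous of the right degree), and then $x = \sum_j (r_j e_j - c_j r_{j_0}e_{j_0})$ since $\sum_j c_j = 0$. This reduces a general homogeneous syzygy to binomials; a routine induction/bookkeeping finishes it.

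For part (2), I need reflexivity, symmetry, and transitivity of ``$\rho\sim\mu$ iff there exist homogeneous $r,s\in R$ with $r\rho = s\mu \ne 0$.'' Reflexivity (take $r=s=1$) and symmetry (swap $r$ and $s$) are immediate. For transitivity, suppose $r\rho = s\mu$ and $r'\mu = s'\tau$, all nonzero, with the degree conditions holding. Then $r'r\rho = r's\mu = s(r'\mu) = s s'\tau$, so $(r'r)\rho = (ss')\tau$; I must check this common element is nonzero and that the degrees match. The degree condition is just additivity of $\deg$; nonvanishing follows because $r\rho\ne 0$ and $r'\mu\ne 0$ live in one-dimensional graded pieces, and multiplying a nonzero element of a one-dimensional piece $W_a = k\cdot w$ by a nonzero homogeneous $r'\in R$ gives $r'w$, which is nonzero exactly when the graded piece $W_{a + \deg r'}$ is nonzero — and it is, since it contains $r'\rho'$ for the relevant monomial; more carefully, $r'r\rho = s s'\tau$ and the right side is a unit scalar times $s'\tau$, while $s'\tau = s'\cdot(\text{the }\tau\text{ side of the second relation})$ is nonzero by hypothesis that $s'\tau = r'\mu \ne 0$. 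So the composite relation is genuinely binomial and nonzero, giving $\rho\sim\tau$.

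I expect the main obstacle to be purely organizational rather than conceptual: in part (1), carefully handling the degenerate case (a monomial $\rho_j$ annihilated by some homogeneous $r$, i.e. $r\rho_j$ landing in a zero graded piece) within the uniform ``binomial'' framework, and making the reduction of an arbitrary homogeneous syzygy to the listed generators into a clean finite induction. In part (2), the only subtlety is keeping track of the nonvanishing hypotheses through the composition in the transitivity argument, which is why the one-dimensionality of the graded pieces is essential — it forces products of homogeneous elements to be scalar multiples of each other and lets one detect vanishing purely from which graded pieces are zero.
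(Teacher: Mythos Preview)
Your proposal is correct and follows essentially the same approach as the paper: for part (1) you reduce to homogeneous syzygies and then use one-dimensionality of graded pieces to decompose into binomials, and for part (2) you prove transitivity by multiplying the two relations together, exactly as the paper does (the paper's entire proof of (2) is the single line ``If $r\rho = s\mu$, and $s'\mu = t\tau$, then $rs'\rho = st\tau$'').

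The only difference is that you are more explicit about edge cases the paper simply ignores: the degenerate binomials $r\rho_j = 0$ in part (1), and the nonvanishing check in part (2). Your nonvanishing discussion is a bit tangled (the claim that $r'w \ne 0$ iff the target graded piece is nonzero is not true in general, and the ``unit scalar times $s'\tau$'' line does not parse), but none of this matters: in the paper's setting $R$ is a monomial domain and $W \subset \Frac(R)$ is torsion-free, so products of nonzero homogeneous elements are automatically nonzero. The paper tacitly relies on this and you may as well too.
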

\begin{proof}
\begin{enumerate}
\item Let $\mu_i$ be graded generators for $W$. Let $\sum_i r_i \mu_i = 0$ be a relation. Since $W$ is graded, $\sum_i r_i \mu_i$ may be written as a sum of graded pieces, each of which is itself equal to 0. In other words, the relations on $W$ are generated by relations with the property that $r_i\mu_i$ has the same degree for each $i$. In that case, since each graded piece of $W$ is a one-dimensional $k$-vector space, we have that for each $i$, and each $j$ for which $r_i\mu_i\neq 0$, $r_i\mu_i = c_{ij} r_j\mu_j$ for some $c_{ij}\in k$. This is a binomial relation on $\mu_i$ and $\mu_j$, and binomial relations of this form generate the original relation $\sum_i r_i \mu_i = 0$.
\item If $r\rho = s\mu$, and $s'\mu = t\tau$, then $rs'\rho = st\tau$, so $\rho \sim \tau$.
\end{enumerate}
\end{proof}

The following lemma essentially indicates how to decompose $R^{1/q}$ as a direct sum of $R$-submodules generated by monomials. It also gives a condition describing which monomials generate free summands of $R^{1/q}$. 

\begin{lem}
\label{MonomialDecomposition}
Let $W$ be a finitely generated $G$-graded $R$-module, $G$ an abelian group, with each nonzero graded piece a one-dimensional $k$-vector space. (For example, $W = R^{1/q}$, $G = \mathbb{Z}^n$.) Let $H$ be a set of homogeneous generators for $W$. Let $A_1,\ldots, A_k\subset H$ be the distinct equivalence classes of elements of $H$ which are related (in the sense of Lemma \ref{MonomialModules}). Then:
\begin{enumerate}
\item $W \simeq \bigoplus_i R\cdot A_i$ is a direct sum of submodules generated by the sets $A_i$. 
\item Each of the submodules $R\cdot A_i$ is rank one (hence indecomposable even as an ungraded $R$-module). 
\item Finally, a homogeneous element $\mu\in W$ generates a free summand of $W$ if and only if the only homogeneous elements of $W$ that are related to $\mu$ are $R$-multiples of $\mu$.
\end{enumerate}
\end{lem}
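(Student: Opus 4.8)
The plan is to analyze the binomial-relation equivalence relation from Lemma \ref{MonomialModules} on the set of monomials of $W$, and to use the fact that each graded piece of $W$ is one-dimensional over $k$. First I would note that since $W$ is $G$-graded with one-dimensional graded pieces, every homogeneous element is a $k$-multiple of a monomial, so without loss of generality we may take $H$ to be a set of monomials (one in each relevant degree among the generators), and the submodule $R\cdot A_i$ is the $R$-span of a union of monomials in $W$. For part (1), the key observation is that if $\mu \in A_i$ and $\nu \in A_j$ with $i \neq j$, then no $R$-multiple of $\mu$ can equal a nonzero $R$-multiple of $\nu$: such an equality $r\mu = s\nu$ (with $r\mu \neq 0$, which forces $s\nu \neq 0$ since graded pieces are one-dimensional) is by definition a binomial relation, hence $\mu \sim \nu$, contradicting that the $A_i$ are distinct equivalence classes. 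From this one argues that $\sum_i R\cdot A_i$ is a direct sum: any homogeneous element in $R\cdot A_i \cap \sum_{j\neq i} R\cdot A_j$ lies in a single degree, where the one-dimensionality of the graded piece forces it to be zero (a nonzero element in that degree would be simultaneously a nonzero multiple of a monomial in $A_i$ and a nonzero multiple of monomials in $\bigcup_{j \neq i} A_j$, again producing a cross-relation). Since $H = \bigsqcup_i A_i$ generates $W$, we get $W = \bigoplus_i R\cdot A_i$.

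For part (2), I would show $R \cdot A_i$ has rank one, i.e.\ $R\cdot A_i \otimes_R \Frac(R)$ is one-dimensional. Fix any monomial $\mu_0 \in A_i$. For any other monomial $\mu \in A_i$, the chain of binomial relations connecting $\mu$ to $\mu_0$ gives, after clearing denominators, an equation of the form $a\mu = b\mu_0$ with $a, b$ nonzero monomials in $R$ (this uses that being related is transitive, Lemma \ref{MonomialModules}(2), and that each individual binomial relation $r\rho = s\tau$ has $r, s$ nonzero when $\rho, \tau$ are monomials in the same component structure — one has to be slightly careful and chase the relation, but $R$ is a domain so nonzero monomials are units in $\Frac(R)$). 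Hence in $R\cdot A_i \otimes_R \Frac(R)$ every generator $\mu$ equals $\frac{b}{a}\mu_0$, so the localization is spanned by $\mu_0$; it is nonzero since $\mu_0$ itself is a nonzero element of $W \subset k[M]$ after tensoring, so the rank is exactly one. A rank-one module over a domain that is a direct summand is indecomposable even as an ungraded module, since any nontrivial direct sum decomposition would have to split the rank, forcing a rank-zero (hence torsion) summand, impossible inside the torsion-free $k[M]$.

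For part (3): if $\mu$ generates a free summand $R\mu \cong R$ of $W$, and $r\mu = s\nu$ is a binomial relation with $\nu$ a monomial of $W$, then by part (1) $\nu$ must lie in the same equivalence class $A_i$ as $\mu$ (else the relation would be a cross-relation, impossible); and since $R \cdot A_i$ is a free rank-one summand generated by $\mu$, every monomial of $R\cdot A_i$, in particular $\nu$, must be an $R$-multiple of $\mu$ — otherwise $R\cdot A_i$ would need at least two generators and could not be free of rank one. Conversely, if the only monomials of $W$ related to $\mu$ are $R$-multiples of $\mu$, then $A_i = \{$monomials $\sim \mu\}$ consists of $R$-multiples of $\mu$, so $R\cdot A_i = R\mu$ is cyclic, and it is free because it is rank one (part 2) and torsion-free (sitting inside $k[M]$), so $R\mu \cong R$ is a free summand by part (1). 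I expect the main obstacle to be the bookkeeping in part (2): making precise that a finite chain of binomial relations through intermediate monomials can always be collapsed to a single relation $a\mu = b\mu_0$ with $a,b$ \emph{nonzero}, which requires knowing that none of the monomial coefficients appearing along the way vanish — this is where one genuinely uses that $W$ embeds in (a localization of) $k[M]$, equivalently that $R$ is a domain, rather than working with an abstract graded module with one-dimensional pieces.
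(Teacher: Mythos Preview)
Your proposal is correct and follows essentially the same approach as the paper's proof. One minor simplification: in part~(2) you worry about collapsing a chain of binomial relations into a single one, but Lemma~\ref{MonomialModules}(2) already shows that relatedness is transitive via a \emph{single} binomial relation (if $r\rho = s\mu$ and $s'\mu = t\tau$ then $rs'\rho = st\tau$), so any two elements of $A_i$ satisfy a direct relation $a\mu = b\mu_0$ and no chain-chasing is needed; your caution about torsion-freeness (to ensure $a,b\neq 0$ and to conclude cyclic rank-one implies free in part~(3)) is well-placed and is used implicitly in the paper as well.
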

\begin{proof}
\begin{enumerate}
\item Suppose that $A, B\subset H$, and $C = A \coprod B$. Since the corresponding submodules $R\cdot A$ and $R\cdot B$ are graded, their intersection must also be graded. In particular, in order for these modules to have nonempty intersection (equivalently, for the sum $R\cdot A+R\cdot B=R\cdot C$ to fail to be direct), we should have a binomial relation $r\mu = s\tau$ for some $\mu\in A, \tau\in B$, and $r, s\in R$, by Lemma \ref{MonomialModules}. However, we constructed the sets $A_i$ so that no binomial relations exist between them. We conclude that the sum is direct, and $W = R\cdot H \simeq \sum_i R\cdot A_i$.

\item Suppose that we have a subset $A\subset H$ of homogeneous elements which are are all related to one another (for example, $A = A_i$ for some $i$). Then pick a homogeneous $\mu\in A$. For any $\tau\in A$, we have that $r\mu = s\tau$ for $r, s\in R$; equivalently, $\tau = \frac{r}{s}\mu\in \Frac(R)\cdot\mu$. We conclude that $R\cdot A$ has rank 1, that is, $R\cdot A\otimes_R \Frac(R) \simeq \Frac(R)$.

\item Fix $\mu\in W$. Let $A_1,\ldots, A_k$ be the equivalence classes of related monomials in $W$. Without loss of generality, we may assume that $\mu \in A_1$. Then $R\cdot A_1$ is free of rank one if and only if it is generated by a single monomial. Thus, $R\cdot\mu$ is a free summand of $W$ if and only if $\mu$ generates $R\cdot A_1$, that is, if and only if $\mu$ divides every homogeneous element of $W$ that is related to $\mu$.
\end{enumerate}
\end{proof}


The following lemma will be essential in the next section when we compute the free rank $a_e$ of $R^{1/q}$ as an $R$-module. (We will also use this lemma in Section \ref{PairsAndTriplesSection} when computing the $F$-signature of pairs and triples.)

\begin{lem}\label{FreeRankFormula}
Let $R=k[S]$ be a monomial ring, $S$ a semigroup, and let $L = \Lattice(S)$ be the group generated by $S$. Fix $q=p^e$. Let $H\subset \frac{1}{q}L$ be a finitely generated $S$-module, so that $k[H]\subset k[\frac{1}{q}L]$ is an $R$-module finitely generated by monomials. Let $a_e$ be the free rank of $k[H]$ as an $R$-module. Then the set of monomials in $H$ which generate a free summand of $k[H]$ is $\{\chi^{\vec{v}}\suchthat \vec{v}\in H$, and $\forall \vec{k}\in L\backslash S, \vec{v}+\vec{k}\notin H\}$. Moreover, if 0 is the only unit in $H$, then $a_e$ is the size of this generating set.
\end{lem}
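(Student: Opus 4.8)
The plan is to apply Lemma \ref{MonomialDecomposition} directly, taking $W = k[H]$ with its natural $\frac{1}{q}L/L$-grading (or, if one prefers, a $(\mathbb{Z}/q)^n$-grading coming from a choice of basis of $M$); each nonzero graded piece is indeed a one-dimensional $k$-vector space, so the hypotheses are met. By part (3) of that lemma, a monomial $\chi^{\vec v}\in k[H]$ generates a free summand precisely when every homogeneous element of $k[H]$ related to it is an $R$-multiple of it. So the first step is to translate the relation ``$\chi^{\vec v}$ is related to $\chi^{\vec w}$'' into lattice language: two monomials $\chi^{\vec v},\chi^{\vec w}\in k[H]$ satisfy a binomial relation $r\chi^{\vec v}=s\chi^{\vec w}$ with $r,s$ monomials of $R$ iff there exist $\vec a,\vec b\in S$ with $\vec v+\vec a=\vec w+\vec b$; since being related is transitive (Lemma \ref{MonomialModules}), and $\vec v,\vec w\in\frac1qL$ forces $\vec v-\vec w\in L$, the relation ``$\chi^{\vec v}\sim\chi^{\vec w}$'' is equivalent to: there is a chain in $\frac1qL\cap$ (support of $k[H]$) connecting them via steps in $\pm S$.

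The second step is to show that this is the same as: $\chi^{\vec v}\sim\chi^{\vec w}$ iff both $\vec w - \vec v$ and $\vec v - \vec w$ can be realized by adding elements of $S$ after possibly passing through $H$; concretely, I will argue that $\chi^{\vec v}$ \emph{fails} to generate a free summand exactly when there is some $\vec w\in H$ related to $\vec v$ with $\vec w \ne \vec v + \vec s$ for any $\vec s\in S$, and that by chasing the chain of binomial relations this happens iff there is a single step $\vec k\in L\setminus S$ with $\vec v + \vec k \in H$. The ``$\Leftarrow$'' direction is immediate: if $\vec v+\vec k\in H$ with $\vec k\in L\setminus S$, then $\chi^{\vec v}$ and $\chi^{\vec v+\vec k}$ are related (take $\vec a = \vec b = \vec 0$ in the displayed equation after clearing — more carefully, $\vec k = \vec a - \vec b$ with $\vec a,\vec b\in S$ since $\vec k\in L$, giving $\vec v + \vec a = (\vec v+\vec k)+\vec b$), but $\chi^{\vec v+\vec k}$ is not an $R$-multiple of $\chi^{\vec v}$ because $\vec k\notin S$. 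For ``$\Rightarrow$'': if $\chi^{\vec v}$ does not generate a free summand, there is $\vec w\in H$ related to it that is not an $R$-multiple, i.e.\ $\vec w - \vec v\in L\setminus S$; set $\vec k = \vec w-\vec v$ and note $\vec v+\vec k=\vec w\in H$. This already gives the characterization of the generating set, with no transitivity argument actually needed for the final equivalence — the one-step witness suffices in each direction.

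The last step handles the counting claim. By Lemma \ref{MonomialDecomposition}(1)--(2), $k[H]=\bigoplus_i R\cdot A_i$ with each $R\cdot A_i$ rank one, and $R\cdot A_i$ is free iff $A_i$ is a singleton $\{\chi^{\vec v}\}$ with $\vec v$ in the set just described. So $a_e$ counts those equivalence classes that are singletons of the good form. The subtlety is that a priori two distinct good monomials $\chi^{\vec v},\chi^{\vec v'}$ might lie in the same class, which would make $a_e$ smaller than the size of the generating set; but if $\chi^{\vec v}\sim\chi^{\vec v'}$ and both generate free summands then each is an $R$-multiple of the other, forcing $\vec v - \vec v'$ and $\vec v' - \vec v$ to both lie in $S$, hence $\vec v - \vec v'$ is a unit of $H$; under the hypothesis that $0$ is the only unit in $H$ this gives $\vec v = \vec v'$. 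Therefore distinct good monomials give distinct singleton classes, and $a_e$ equals the cardinality of the generating set. I expect the main obstacle to be purely bookkeeping: being careful that ``related'' is formulated correctly over $\frac1qL$ (so that differences land in $L$, not just in $M$), and that the open/closed and unit conventions are invoked exactly where needed; there is no deep step, only the need to keep the lattice arithmetic and the appeals to Lemma \ref{MonomialDecomposition} straight.
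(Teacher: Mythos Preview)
Your approach is correct and matches the paper's: apply Lemma \ref{MonomialDecomposition}(3), translate ``related but not an $R$-multiple'' into the existence of $\vec k\in L\setminus S$ with $\vec v+\vec k\in H$, and then count. One correction: grade $k[H]$ by $\frac{1}{q}L$ itself, not by the quotient $\frac{1}{q}L/L$ --- under the quotient grading the graded pieces are far from one-dimensional (all of $R\subset k[H]$ lands in degree zero), so the hypothesis of Lemma \ref{MonomialDecomposition} would fail. (The paper's own parenthetical ``$(\mathbb{Z}/q)^n$-graded'' in Lemma \ref{MonomialModules} is misleading in the same way.) With the $\frac{1}{q}L$-grading your argument goes through; the transitivity digression is indeed unnecessary, and your explicit unit argument for the counting step is a clean unpacking of what the paper asserts in a single sentence.
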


\begin{proof}
By Lemma \ref{MonomialDecomposition}, a monomial $\mu\in k[H]$ generates a free summand of $k[H]$ if and only if it is unrelated to all monomials in $k[H]$ that are not $R$-multiples of itself. We may characterize $\tau$ being related to $\mu$ (but not a multiple of it) by $\tau = \frac{r}{s}\mu$, with $\frac{r}{s}\in(\Frac{R})\backslash R$. Thus, the set of monomial generators for $k[H]$ which generate a free summand of $k[H]$ is $\{\mu\in \chi^H\suchthat$ for all monomials $\frac{r}{s}\in(\Frac{R})\backslash R, \frac{r}{s}\mu\notin k[H]\}$. We may rewrite this set as $\{\chi^{\vec{v}}\suchthat \vec{v}\in H$, and $\forall \vec{k}\in L\backslash S, \vec{v}+\vec{k}\notin H\}$, which is precisely the set described in the statement of the lemma. If 0 is the only unit in $H$, then there is a one-to-one-correspondence between monomials in the generating set and free summands of $W$. In that case, $a_e$ is the size of the generating set.
\end{proof}

\begin{rem}
As we'll see shortly, when we apply Lemma \ref{FreeRankFormula} to the case of the $R$-module $R^{1/q}$, the technical requirement that $0\in H$ be the only unit corresponds to the cone $\sigma$ being full-dimensional.
\end{rem}

\subsection{Proof of the Main Result}\label{MainResultProofSection}

\begin{rem}(An aside on computing volumes.)
Consider $M\subset M_{\mathbb{R}}$, a lattice abstractly isomorphic to $\mathbb{Z}^n$ contained in a vector space abstractly isomorphic to $\mathbb{R}^n$. Choosing a basis for $M$ gives us an identification of $M_{\mathbb{R}}$ with $\mathbb{R}^n$, hence a way to measure volume on $M_{\mathbb{R}}$. It's easily checked that this volume measure depends only on $M$ and not on our choice of basis for $M$. (Such a measure is uniquely determined by the fact that with respect to it, the measure of a fundamental parallelepiped for $M$, also called the \emph{covolume} of $M$, is 1.) Thus, it makes sense to talk about measuring volume ``relative to the lattice $M$," denoted $\Vol_M$ (or simply $\Vol$ when there is no risk of ambiguity).
\end{rem}

Now we are ready to prove our main result.

\begin{proof}[Proof of Theorem \ref{AffineToricFSignature}]

Suppose first that $X$ has no torus factors. We apply Lemma \ref{FreeRankFormula}, with $H = \sigma^{\dual}\cap\frac{1}{p^e}M$, $k[H] = R^{1/p^e}$. Since $\sigma$ is full-dimensional, $\sigma^{\dual}$ is strongly convex (by Lemma \ref{FullDimensionalToStronglyConvex}), so $H$ contains no nontrivial units. Then $$a_e = \#\{\vec{v}\in (\sigma^{\dual}\cap\frac{1}{p^e}M)\suchthat \forall \vec{k}\in M\backslash \sigma^{\dual}, \vec{v}+\vec{k}\notin \sigma^{\dual}\}.$$

Let $P^{'}_{\sigma}$ be the set $\{\vec{v}\in \sigma^{\dual}\suchthat \forall \vec{k}\in M\backslash \sigma^{\dual}, \vec{v}+\vec{k}\notin \sigma^{\dual}\}$.  Then $a_e = \#\{\vec{v}\in P^{'}_{\sigma}\cap \frac{1}{p^e}M\}$. By Lemma \ref{PCCharacterization}, $P^{'}_{\sigma} = P_{\sigma}$. Set $q = p^e$. Then $s(R)$, defined to be $\lim_{e\to\infty}\frac{a_e}{p^{en}}$, is equal to $\lim_{q\to\infty} \frac{\#(P_{\sigma}\cap \frac{1}{q}M)}{q^n}$. We apply Lemma \ref{PolytopeVolume} to conclude that $s(R) = \Vol(P_{\sigma})$. 

Suppose now that $X$ has torus factors. By Lemma \ref{ConeProduct}, $X \simeq X'\times T_{N''}$, where $X' = \Spec k[\sigma'\cap M']$ and $T_{N''}$ is the algebraic torus $\Spec k[M'']$. In particular, $R\simeq k[X']\otimes_k k[M'']$. We apply Theorem \ref{GradedProductFSignature} on the $F$-signature of products to see that $s(R) = s(k[X'])\cdot 1 = s(k[X']) =  \Vol(P_{\sigma'})$. (It is easy to check directly that $s(k[M''])=1$: writing $M''\simeq \mathbb{Z}^{d''}$, we see that $k[\mathbb{Z}^{d''}]^{1/q}$ is a free $k[\mathbb{Z}^{d''}]$-module of rank $q^{d''}$.)
\end{proof}

It remains to prove the two lemmas referenced in the proof of Theorem \ref{AffineToricFSignature}.

\begin{lem}\label{PCCharacterization}
Suppose that we are in the situation of Remark \ref{ToricAssumptions}. Then $$P^{'}_{\sigma} := \{\vec{v}\in \sigma^{\dual}\suchthat \forall \vec{k} \in M\backslash \sigma^{\dual}, \vec{v}+\vec{k}\notin \sigma^{\dual}\} = \{\vec{v}\in M_{\mathbb{R}}\suchthat \forall i, 0\leq \vec{v}\cdot \vec{v}_i<1\} =: P_{\sigma}.$$ 
\end{lem}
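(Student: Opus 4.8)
The plan is to prove the two sets coincide by a double inclusion, exploiting the fact that $\sigma^{\dual}$ is cut out by the inequalities $\vec{w}\cdot\vec{v}_i\geq 0$ for the primitive generators $\vec{v}_1,\ldots,\vec{v}_r$ of $\sigma$. Throughout I will use that a vector $\vec{w}\in M_{\mathbb{R}}$ lies in $\sigma^{\dual}$ if and only if $\vec{w}\cdot\vec{v}_i\geq 0$ for all $i$, and — by Lemma \ref{ConeLattice} together with full-dimensionality of $\sigma$ — that $M$ is generated by $S=\sigma^{\dual}\cap M$.

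\medskip

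\noindent\textbf{Step 1: $P_{\sigma}\subseteq P^{'}_{\sigma}$.} Take $\vec{v}\in M_{\mathbb{R}}$ with $0\leq \vec{v}\cdot\vec{v}_i<1$ for all $i$; in particular $\vec{v}\in\sigma^{\dual}$. Suppose $\vec{k}\in M$ with $\vec{v}+\vec{k}\in\sigma^{\dual}$; I want to show $\vec{k}\in\sigma^{\dual}$. For each $i$ we have $(\vec{v}+\vec{k})\cdot\vec{v}_i\geq 0$, so $\vec{k}\cdot\vec{v}_i\geq -\vec{v}\cdot\vec{v}_i>-1$. But $\vec{v}_i\in N$ and $\vec{k}\in M$, so $\vec{k}\cdot\vec{v}_i\in\mathbb{Z}$; an integer strictly greater than $-1$ is $\geq 0$. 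Hence $\vec{k}\cdot\vec{v}_i\geq 0$ for all $i$, i.e. $\vec{k}\in\sigma^{\dual}$. This shows the contrapositive of the defining condition for $P^{'}_{\sigma}$: if $\vec{k}\in M\backslash\sigma^{\dual}$ then $\vec{v}+\vec{k}\notin\sigma^{\dual}$. So $\vec{v}\in P^{'}_{\sigma}$.

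\medskip

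\noindent\textbf{Step 2: $P^{'}_{\sigma}\subseteq P_{\sigma}$.} Let $\vec{v}\in P^{'}_{\sigma}$; then $\vec{v}\in\sigma^{\dual}$, giving $\vec{v}\cdot\vec{v}_i\geq 0$ for all $i$. It remains to show $\vec{v}\cdot\vec{v}_i<1$ for each fixed $i$. Suppose for contradiction that $\vec{v}\cdot\vec{v}_i\geq 1$ for some $i$. The idea is to produce a lattice vector $\vec{k}\in M\backslash\sigma^{\dual}$ with $\vec{v}+\vec{k}\in\sigma^{\dual}$, contradicting membership in $P^{'}_{\sigma}$. Consider the facet $F_i$ of $\sigma^{\dual}$ supported on the hyperplane $\vec{v}_i^{\perp}$ (equivalently, $F_i$ corresponds to the ray $\mathbb{R}_{\geq 0}\vec{v}_i$ of $\sigma$); since $\sigma^{\dual}$ is full-dimensional and $\vec{v}_i$ is primitive, one can choose a lattice point $\vec{u}\in S\cap F_i$, i.e. $\vec{u}\cdot\vec{v}_i=0$ and $\vec{u}\cdot\vec{v}_j\geq 0$ for all $j$, together with a lattice vector $\vec{k}$ with $\vec{k}\cdot\vec{v}_i=-1$; the cleanest route is to use that $M$ surjects onto $\mathbb{Z}$ via $\vec{w}\mapsto\vec{w}\cdot\vec{v}_i$ (because $\vec{v}_i$ is primitive) and adjust by a large multiple of a lattice point interior to the facet $F_i$ so that $\vec{k}\cdot\vec{v}_j\geq 0$ for all $j\neq i$ while keeping $\vec{k}\cdot\vec{v}_i=-1$. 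Then $\vec{k}\notin\sigma^{\dual}$ (since $\vec{k}\cdot\vec{v}_i=-1<0$), yet $(\vec{v}+\vec{k})\cdot\vec{v}_i=\vec{v}\cdot\vec{v}_i-1\geq 0$ and $(\vec{v}+\vec{k})\cdot\vec{v}_j=\vec{v}\cdot\vec{v}_j+\vec{k}\cdot\vec{v}_j\geq 0$ for $j\neq i$, so $\vec{v}+\vec{k}\in\sigma^{\dual}$. This contradicts $\vec{v}\in P^{'}_{\sigma}$, so $\vec{v}\cdot\vec{v}_i<1$.

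\medskip

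\noindent I expect the main obstacle to be Step 2, specifically the construction of the lattice vector $\vec{k}$ with $\vec{k}\cdot\vec{v}_i=-1$ and $\vec{k}\cdot\vec{v}_j\geq 0$ for all $j\neq i$. Producing \emph{some} $\vec{k}\in M$ with $\vec{k}\cdot\vec{v}_i=-1$ is immediate from primitivity of $\vec{v}_i$, but controlling the signs of the other pairings requires knowing that the facet $F_i=\sigma^{\dual}\cap\vec{v}_i^{\perp}$ contains lattice points in its relative interior — which holds because $F_i$ is a full-dimensional rational cone inside the hyperplane $\vec{v}_i^{\perp}$ — so that adding a large positive multiple of such an interior point to our initial $\vec{k}$ makes all pairings with $j\neq i$ strictly positive without changing the pairing with $\vec{v}_i$. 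An alternative that avoids this fuss: argue directly that if $\vec{v}\cdot\vec{v}_i\geq 1$ then $\vec{v}$ is ``too big'' near the facet $F_i$, by translating in the $-\vec{v}_i$ direction; but since $\vec{v}_i\in N$ rather than $M$ this needs the same dualization. Either way the combinatorial core is the standard fact (a consequence of Lemma \ref{ConeLattice} applied to the facet) that facets of a full-dimensional rational cone carry interior lattice points, which I would invoke rather than reprove.
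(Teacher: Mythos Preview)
Your proof is correct and follows essentially the same approach as the paper's: a double inclusion, with Step~1 using the integrality of $\vec{k}\cdot\vec{v}_i$ and Step~2 constructing the witness $\vec{k}$ by choosing any $\vec{k}_0\in M$ with $\vec{k}_0\cdot\vec{v}_i=-1$ and then adding a large multiple of a lattice point in the relative interior of the facet $F_i=\sigma^{\dual}\cap\vec{v}_i^{\perp}$. The paper invokes Lemma~\ref{ConeLattice} where you invoke primitivity of $\vec{v}_i$ for the existence of $\vec{k}_0$, but these are equivalent here; your discussion of the ``main obstacle'' in Step~2 is exactly the point the paper handles in the same way.
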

\begin{proof}
Recall that $\sigma^{\dual}=\{\vec{u}\suchthat\vec{u}\cdot \vec{v}_i\geq 0$ for all $i\}$.

Suppose $\vec{v}\in P_{\sigma}$, so that for each $i$, $0\leq \vec{v}\cdot\vec{v}_i<1$. Fix $\vec{k}\in M\backslash \sigma^{\dual}$. Since $\vec{k}\notin \sigma^{\dual}$, we know that $\vec{k}\cdot \vec{v}_j<0$ for some $j$. For such $j$, since $\vec{k}\cdot\vec{v}_j\in \mathbb{Z}$, we know that $\vec{k}\cdot\vec{v}_j\leq -1$. It follows that $(\vec{v}+\vec{k})\cdot\vec{v}_j<0$. Thus, $\vec{v}+\vec{k}\notin \sigma^{\dual}$. We conclude that $\vec{v}\in P^{'}_{\sigma}$. Hence, $P_{\sigma}\subset P^{'}_{\sigma}$.

Conversely, suppose that $\vec{v}\notin P_{\sigma}$, so that for some $j$, $\vec{v}\cdot \vec{v}_j\geq 1$. Set $\vec{k}_0$ to be any vector in $M$ such that $\vec{k}\cdot \vec{v}_j = -1$. (Such $\vec{k}_0$ exists since by Lemma \ref{ConeLattice}, $\Lattice(S) = M$.) Choose $\vec{k}_1$ to be any vector in $M$ that also lies in the interior of the facet $F_j = \vec{v}_j^{\perp}\cap \sigma^{\dual}$ of $\sigma^{\dual}$. Then $\vec{k}_1\cdot \vec{v}_i>0$ for each $i\neq j$. Thus, for sufficiently large $m$, $(\vec{k}_0+m\vec{k}_1)\cdot \vec{v}_i \geq 0$ for $i\neq j$, while $(\vec{k}_0+m\vec{k}_1)\cdot \vec{v}_j = 0$. Set $\vec{k} = \vec{k}_0+m\vec{k}_1$. Then $\vec{k}\in M$, but $\vec{v}\notin \backslash \sigma^{\dual}$, since $\vec{k}\cdot \vec{v}_j=-1<0$. On the other hand, $\vec{v}+\vec{k}\in \sigma^{\dual}$, since $(\vec{v}+\vec{k})\cdot \vec{v}_i\geq 0$ for each $i$. We conclude that $\vec{v}\notin P^{'}_{\sigma}$.

Hence, $P^{'}_{\sigma}\subset P_{\sigma}$. We conclude that $P_{\sigma} = P^{'}_{\sigma}$, as we desired to show.
\end{proof}


\begin{lem}\label{PolytopeVolume}
Let $M$ be a lattice and $P\subset M\otimes_{\mathbb{Z}}\mathbb{R}$ a polytope (or, more generally, any set whose boundary has measure zero). Then $\lim_{q\to\infty} \frac{\#\{P\cap \frac{1}{q}M\}}{q^n} = \Vol(P)$.
\end{lem}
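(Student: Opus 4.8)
Lemma \ref{PolytopeVolume} is a standard Riemann-sum statement: the number of scaled lattice points $\frac{1}{q}M$ inside $P$, normalized by $q^n$, converges to the volume of $P$ (measured relative to $M$, so that $\Covol(M) = 1$). The plan is to interpret $\#\{P\cap \frac{1}{q}M\}/q^n$ as the total mass assigned by a discrete measure that approximates Lebesgue measure, and then control the error near $\partial P$.

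First I would fix a basis for $M$, identifying $M_{\mathbb{R}}$ with $\mathbb{R}^n$ and $M$ with $\mathbb{Z}^n$; since $\Covol(M) = 1$ this identification preserves the volume measure $\Vol = \Vol_M$, so no normalization constant appears. For each $q = p^e$, associate to every point $\vec{w} \in \frac{1}{q}\mathbb{Z}^n$ the half-open cube $C_{q,\vec{w}} = \vec{w} + [0, \tfrac{1}{q})^n$, which has volume $q^{-n}$, and observe that these cubes tile $\mathbb{R}^n$ as $\vec{w}$ ranges over $\frac{1}{q}\mathbb{Z}^n$. Then $q^{-n}\cdot \#\{P \cap \frac{1}{q}\mathbb{Z}^n\} = \sum_{\vec{w} \in P \cap \frac{1}{q}\mathbb{Z}^n} \Vol(C_{q,\vec{w}}) = \Vol\!\left(\bigcup_{\vec{w}\in P\cap \frac{1}{q}\mathbb{Z}^n} C_{q,\vec{w}}\right)$, the volume of the union $U_q$ of the cubes whose base-corner lies in $P$.

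Next I would sandwich $U_q$ between two sets close to $P$. Let $P_{-\delta}$ denote the set of points at distance $\geq \delta$ from the complement of $P$, and $P_{+\delta}$ the $\delta$-neighborhood of $P$; taking $\delta = \sqrt{n}/q$ (the diameter of a cube), one checks that $P_{-\delta} \subseteq U_q \subseteq P_{+\delta}$ up to the boundary: if $\vec{w}\in P$ then $C_{q,\vec{w}}\subseteq P_{+\delta}$, and if a point $\vec{x}$ lies at distance $\geq \delta$ from $P^c$ then the base-corner of the cube containing $\vec{x}$ lies in $P$, so $\vec{x}\in U_q$. Hence $\Vol(P_{-\delta}) \leq \Vol(U_q) \leq \Vol(P_{+\delta})$. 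As $q\to\infty$ we have $\delta\to 0$, and since $\partial P$ has Lebesgue measure zero, both $\Vol(P_{\pm\delta})$ converge to $\Vol(P)$ (the symmetric difference $P_{+\delta}\setminus P_{-\delta}$ is contained in the $\delta$-neighborhood of $\partial P$, whose volume shrinks to $0$ by continuity of measure and the null-boundary hypothesis). Squeezing gives $\lim_{q\to\infty} \Vol(U_q) = \Vol(P)$, which is the claim.

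The only genuinely delicate point is the assertion that $\Vol$ of the $\delta$-neighborhood of $\partial P$ tends to $0$ as $\delta\to 0$; this is where the ``boundary has measure zero'' hypothesis is used. For a polytope this is elementary — $\partial P$ is a finite union of bounded pieces of hyperplanes, and the $\delta$-neighborhood of a bounded subset of a hyperplane has volume $O(\delta)$ — so in the case we actually apply the lemma to ($P = P_\sigma$, an honest polytope) there is no obstacle at all. For the general ``null boundary'' version one invokes: $\partial P$ compact (or at least the relevant part is, after intersecting with a large ball containing $P$) and measure zero implies, by outer regularity of Lebesgue measure, that $\partial P$ can be covered by finitely many open cubes of total volume $<\varepsilon$, and for $\delta$ small the $\delta$-neighborhood of $\partial P$ is contained in a slight enlargement of that cover; letting $\varepsilon\to 0$ finishes it. I would present the polytope case cleanly and remark that the measure-zero case follows the same way with this regularity argument.
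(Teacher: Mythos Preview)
Your argument is correct and is precisely the Riemann-sum approach the paper itself invokes; the paper's own proof is simply a pair of citations --- it notes that for an honest polytope the count is the Ehrhart polynomial with leading coefficient $\Vol(P)$, and for the general null-boundary statement it refers to Folland for exactly the Riemann-sum argument you have written out in detail.
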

\begin{proof}
In fact, when $P$ is a polytope, it can be shown that the quantity $\#\{P\cap \frac{1}{q}M\}$ is polynomial in $q$ of degree $n$, known as the \emph{Ehrhart polynomial} of $P$, and that its leading coefficient $\lim_{q\to\infty} \frac{\#\{P\cap \frac{1}{q}M\}}{q^n}$ is $\Vol(P)$  (\cite{MS05}, Thm 12.2). Even without this fact, however, it is easy to sketch a proof of the special case that we require: the quantity $\lim_{q\to\infty} \frac{\#\{P\cap \frac{1}{q}M\}}{q^n}$ is a limit of Riemann sums measuring the volume of $P$ with respect to the lattice $M$. (See, for example, \cite{Fol99}, Theorem 2.28.)
\end{proof}

\section{$F$-Signature of Pairs and Triples}\label{PairsAndTriplesSection}

\subsection{Definitions}

The notion of singularities of pairs is an important one in birational geometry. Instead of studying the singularities of a variety $X$, or the singularities of a divisor $D$ on $X$, one studies the pair $(X, D)$, for example by considering how $D$ changes under various resolutions of $X$. (See \cite{Kol97} for an introduction to pairs in this setting.) More generally, it is often useful to study triples $(X, D, \mathfrak{a})$, where $D$ is a divisor and $\mathfrak{a}$ an ideal sheaf on $X$. 

The $F$-signature of pairs and triples was recently defined in \cite{BST11}. First, we recall:

\begin{defn}
Let $R$ be a normal domain. Let $D$ be an effective Weil divisor on $X=\Spec R$. We define $R(D)$ to be the module of global sections of $O_X(D)$. That is, $R(D) = \{f\in \Frac(R)\suchthat \divf f + D\geq 0\}$. 
\end{defn}

\begin{rem}
Note that $R\subset R(D)$, and that if $D$ is the principal divisor $\divf g$ for some $g\in R$, then $R(D) = R\cdot \frac{1}{g}$ is the cyclic $R$-module generated by $\frac{1}{g}$.
\end{rem}

Now we can define the $F$-signature of a pair or triple:

\begin{defn}\label{TriplesDef}
Let $(R, m)$ be a normal local (or $\mathbb{N}$-graded) domain over $k$, of dimension $d$. Let $D=\sum_i a_i D_i$ be an effective $\mathbb{Q}$-divisor on $X=\Spec R$, $\mathfrak{a}$ an ideal of $R$, and $0\leq t \in \mathbb{R}$. We define the $F$-signature of the triple $(R, D, \mathfrak{a}^t)$ as follows. For each $e$, let $\mathfrak{C}_e = \Hom_R(R^{1/p^e}, R)$. Let $\mathfrak{D}_e$ be the $R^{1/p^e}$-submodule of $\mathfrak{C}_e$ defined as $\Hom_R(R(\lceil (p^e-1)D\rceil)^{1/p^e}, R)$. (The module structure is given by premultiplication, $r^{1/p^e}\cdot\phi(x) = \phi(r^{1/p^e} x)$.) Define $I_e^D\subset R$ to be the ideal $\{r\in R\suchthat\forall \phi\in\mathfrak{D}_e, \phi(r^{1/p^e})\in m\}$. Then the $F$-signature $s(R, D)$ is $\lim_{e\to\infty} \frac{l(R/I_e^D)}{p^{ed}}$.

Let $\mathfrak{D}'_e =\mathfrak{D}_e\cdot(F_*^e(\mathfrak{a}^{\lceil(p^e-1)t\rceil}))$, the $R^{1/p^e}$-submodule of $\mathfrak{D}_e$ generated by $\{[x\mapsto\phi(a^{1/p^e}\cdot x)]\suchthat a\in \mathfrak{a}^{\lceil(p^e-1)t\rceil}, \phi\in \mathfrak{D}_e\}$. 

Define $I_e^{\mathfrak{a}}\subset R$ to be the ideal $\{r\in R\suchthat \forall \phi\in\mathfrak{D}'_e, \phi(r^{1/p^e})\in m\}$. The $F$-signature $s(R, D, \mathfrak{a}^t)$ is defined to be $\lim_{e\to\infty} \frac{l(R/I_e^{\mathfrak{a}})}{p^{ed}}$.
\end{defn}

\begin{rem}
The limits given in Definition \ref{TriplesDef} have been shown to exist in \cite{BST11}, in the case of a local ring.
\end{rem}

\begin{rem}
It's easily checked that the $F$-signature of the triple $(R, D, (1))$ (with $\mathfrak{a}$ as the unit ideal) is the $F$-signature of the pair $(R, D)$. Likewise, the $F$-signature of the pair $(R, 0)$ (with $D$ as the zero divisor) is the $F$-signature of $R$.
\end{rem}

Just like the ``usual" $F$-signature, the $F$-signature of pairs may be viewed as a measure of the number of splittings of the Frobenius map, or as a measure of the number of free summands splitting off from $R^{1/p^e}$, though the $F$-signature of pairs only counts certain summands:

\begin{lem} \label{PairsInterp}(See \cite{BST11}, Proposition 3.5.)
Suppose that we are in the setting of Definition \ref{TriplesDef}. Set $a_e^D = l(F_*(R/I_e^D))$, so that $s(R, D) = \lim_{e\to\infty} \frac{a_e^D}{p^{e(d+\alpha)}}$. Then $a_e^D$ is the maximum rank of a free summand of $R(\lceil (p^e-1)D\rceil)^{1/p^e}$ that is simultaneously a free summand of the submodule $R^{1/p^e}$. Moreover, any $k$-vector space basis for $R^{1/p^e}/(I_e^D)^{1/p^e}$ lifts to a set of generators in $R^{1/p^e}$ for such a free summand of maximum rank, and $(I_e^D)^{1/p^e}$ is the submodule of elements of $R^{1/p^e}$ which do not generate such a free summand.
\end{lem}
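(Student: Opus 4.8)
The plan, following \cite{BST11}, is to first describe $(I_e^D)^{1/p^e}$ intrinsically as the set of elements of $R^{1/p^e}$ that fail to generate a certain kind of free summand, and then to upgrade this one-element statement to one about maximal-rank free summands by means of a bilinear pairing over the residue field. Fix $q = p^e$ and set $V := R(\lceil (q-1)D\rceil)^{1/q}$, so that $R^{1/q}\subseteq V$ is an inclusion of finitely generated $R$-modules with torsion cokernel (they have equal rank in $\Frac(R^{1/q})$), and $\mathfrak{D}_e = \Hom_R(V,R)$ sits inside $\mathfrak{C}_e = \Hom_R(R^{1/q},R)$ by restriction. The first step is formal: since every element of $R^{1/q}$ is $r^{1/q}$ for a unique $r\in R$ and $(r^{1/q})^q = r$, one reads off from Definition \ref{TriplesDef} that
$$(I_e^D)^{1/q}\;=\;J\;:=\;\{x\in R^{1/q}\suchthat \phi(x)\in m \text{ for all } \phi\in\mathfrak{D}_e\}.$$
Because $\phi(ax) = a\phi(x)\in m$ for $a\in m$, we get $m\cdot R^{1/q}\subseteq J$, so $R^{1/q}/J$ is a $k$-vector space; and since $R^{1/q}\cong F^e_*R$ and $(I_e^D)^{1/q}\cong F^e_*(I_e^D)$ as $R$-modules (and $I_e^D\supseteq m^{[q]}$ is $m$-primary, so these lengths are finite), $\dim_k(R^{1/q}/J) = l_R\big(F^e_*(R/I_e^D)\big) = a_e^D$.

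The one-element dictionary then yields the last sentence of the lemma. For a local (or graded) ring and a finitely generated module $V$, an element $x\in V$ generates a free direct summand isomorphic to $R$ exactly when some $R$-homomorphism $V\to R$ carries $x$ out of $m$ --- rescaling by the resulting unit sends $x$ to $1$, and the kernel of that map is a complement. Applying this with homomorphisms drawn from $\mathfrak{D}_e$: an element $x\in R^{1/q}$ generates a rank-one free summand of $V$ if and only if $x\notin J$. Moreover, if $Rx$ is a free summand of $V$ with $Rx\subseteq R^{1/q}$, then splitting $V = Rx\oplus G$ gives $R^{1/q} = Rx\oplus(R^{1/q}\cap G)$, so $Rx$ is automatically a free summand of $R^{1/q}$ as well. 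Hence $(I_e^D)^{1/q}=J$ is exactly the set of elements of $R^{1/q}$ failing to generate such a free summand, which is the last assertion.

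For summands of maximal rank I would use the pairing $\beta\colon (\mathfrak{D}_e/m\mathfrak{D}_e)\times (R^{1/q}/mR^{1/q})\to k$, $\beta(\overline{\phi},\overline{x}) = \overline{\phi(x)}$, well-defined since $\phi(mR^{1/q})\subseteq m$ and $(a\phi)(x) = a\phi(x)\in m$ for $a\in m$. By construction the right kernel of $\beta$ is $J/mR^{1/q}$, so $R^{1/q}/J$ is the nondegenerate quotient of $R^{1/q}/mR^{1/q}$ and $\operatorname{rank}\beta = \dim_k(R^{1/q}/J) = a_e^D$. Choosing $x_1,\dots,x_{a_e^D}\in R^{1/q}$ whose images form a $k$-basis of $R^{1/q}/J$, their classes in $R^{1/q}/mR^{1/q}$ are linearly independent modulo the right kernel of $\beta$, so there are $\phi_1,\dots,\phi_{a_e^D}\in\mathfrak{D}_e$ with $\beta(\overline{\phi_i},\overline{x_j}) = \delta_{ij}$; then the matrix $(\phi_i(x_j))$ is congruent to the identity mod $m$, hence invertible over $R$. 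Consequently the maps $\iota\colon R^{a_e^D}\to V$, $e_j\mapsto x_j$, and $(\phi_1,\dots,\phi_{a_e^D})\colon V\to R^{a_e^D}$ have invertible composite, which exhibits $\sum_i Rx_i = \iota(R^{a_e^D})$ as a free rank-$a_e^D$ summand of $V$ contained in $R^{1/q}$; this proves the existence of a maximal summand of the claimed rank together with the stated lifting property. For the reverse inequality, if $F\subseteq R^{1/q}$ is any free summand of $V$ with basis $y_1,\dots,y_s$ and dual projections $\psi_1,\dots,\psi_s\in\mathfrak{D}_e$, then a relation $\sum_i \tilde{c}_i y_i\in J$ with $\tilde{c}_i\in R$ forces $\tilde{c}_j = \psi_j\big(\sum_i\tilde{c}_i y_i\big)\in m$, so the $\overline{y_i}$ are $k$-independent in $R^{1/q}/J$ and $s\le a_e^D$.

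The only step carrying real content is the patching --- promoting ``each $x_i$ splits off individually'' to ``the $x_i$ split off together as one free summand'' --- and the device that makes it work is the identification of $R^{1/q}/J$ with the nondegenerate quotient under $\beta$, after which one is left with finite-dimensional linear algebra and a Nakayama-style lift of the $\phi_i$. The remaining ingredients --- the equality $(I_e^D)^{1/q} = J$, the length bookkeeping giving $\dim_k(R^{1/q}/J) = a_e^D$ (with the $F^e_*$ and $[k^p:k]$ factors), and the graded analogues of the local statements, where $m$ and ``local'' become the homogeneous maximal ideal and ``homogeneous'' --- are routine.
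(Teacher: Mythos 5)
Your argument is correct. Note that the paper does not actually prove Lemma \ref{PairsInterp}; it is quoted from \cite{BST11} (Proposition 3.5), so there is no internal proof to compare against. What you have written is a sound, self-contained version of the standard argument: the identification $(I_e^D)^{1/p^e} = \{x\in R^{1/p^e}\suchthat \phi(x)\in m \text{ for all }\phi\in\mathfrak{D}_e\}$, the observation that a free summand of $V=R(\lceil(p^e-1)D\rceil)^{1/p^e}$ contained in $R^{1/p^e}$ is automatically a summand of $R^{1/p^e}$, the length bookkeeping $\dim_k(R^{1/p^e}/(I_e^D)^{1/p^e}) = a_e^D$, and --- the one step with content --- the Nakayama-style patching via the pairing $\mathfrak{D}_e/m\mathfrak{D}_e \times R^{1/p^e}/mR^{1/p^e}\to k$, which produces $\phi_i$ with $(\phi_i(x_j))$ invertible and hence a single free summand of rank $a_e^D$, matched by the reverse bound coming from the dual projections of any free summand. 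This is essentially the proof given in \cite{BST11}, and all the steps check out.
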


Minor modifications can be made to $\mathfrak{D}_e$ or to $\mathfrak{D}'_e$ in the definition, without changing the $F$-signature. In particular:

\begin{lem}(\cite{BST11}, Lemma 4.17.)
Let $R, D, \mathfrak{a}, t$ be as in Definition \ref{TriplesDef}. Suppose that for each $e$, $\mathfrak{D}_e$ is replaced by some $\mathfrak{D}'_e\subset\mathfrak{C}_e$ such that for some $0\neq c\in R$, $c^{1/p^e}\mathfrak{D}_e\subset \mathfrak{D}'_e$ and $c^{1/p^e}\mathfrak{D}'_e\subset \mathfrak{D}_e$. Then the limits given in Definition \ref{TriplesDef} are unchanged by this replacement.
\end{lem}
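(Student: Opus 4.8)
The plan is to reduce the claimed stability of the $F$-signature under replacing $\mathfrak{D}_e$ by $\mathfrak{D}'_e$ to a comparison of the ideals $I_e$ (equivalently, of the ``non-splitting'' submodules of $R^{1/p^e}$) computed from the two families. By Remark~\ref{IeDef} and Lemma~\ref{PairsInterp}, the $F$-signature is $\lim_{e\to\infty} l(R/I_e)/p^{ed}$, where $I_e = \{r \in R \suchthat \forall \phi \in \mathfrak{D}_e, \phi(r^{1/p^e})\in m\}$ (and similarly $I'_e$ using $\mathfrak{D}'_e$). So it suffices to show that $l(R/I_e)$ and $l(R/I'_e)$ differ by a quantity that is $o(p^{ed})$; in fact I expect to show the two limits agree exactly by sandwiching $I_e$ and $I'_e$ between each other up to a Frobenius power of the fixed element $c$.

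The key steps, in order: (1) Fix $0 \neq c \in R$ with $c^{1/p^e}\mathfrak{D}_e \subset \mathfrak{D}'_e$ and $c^{1/p^e}\mathfrak{D}'_e \subset \mathfrak{D}_e$ for all $e$. (2) Unwind what it means for $r \notin I_e$: there is $\phi \in \mathfrak{D}_e$ with $\phi(r^{1/p^e}) \notin m$, i.e., $\phi(r^{1/p^e})$ is a unit. Multiplying $\phi$ by $c^{1/p^e}$ (premultiplication in the $R^{1/p^e}$-module structure) gives $\psi = c^{1/p^e}\cdot\phi \in \mathfrak{D}'_e$ with $\psi(r^{1/p^e}) = \phi(c^{1/p^e} r^{1/p^e}) = \phi((cr)^{1/p^e})$. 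Wait—that is not quite $\phi(r^{1/p^e})$; instead I get information about $cr$. So the correct comparison is: $r \notin I_e \Rightarrow cr \notin I'_e$, hence $c \cdot (R \setminus I_e) \subset R \setminus I'_e$, equivalently $\{r : cr \in I'_e\} \subset I_e$, i.e. $(I'_e : c) \subset I_e$. Symmetrically $(I_e : c) \subset I'_e$. (3) From $(I'_e : c) \subset I_e$ we get a surjection $R/(I'_e : c) \onto R/I_e$, and $R/(I'_e : c) \hookrightarrow R/I'_e$ via multiplication by $c$ has cokernel $R/(I'_e + cR)$, whose length is bounded independently of $e$ (since $c \neq 0$ and $R/cR$ has dimension $< d$, so $l(R/(I'_e+cR))$ grows at most like $p^{e(d-1)}$—actually for this argument I only need it to be $o(p^{ed})$, which holds because $I'_e + cR \supset cR$ and $\dim R/cR < d$, while $l((R/cR)/I'_e(R/cR)) \le $ Hilbert–Kunz-type growth $O(p^{e(d-1)})$). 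Thus $l(R/I_e) \le l(R/(I'_e:c)) \le l(R/I'_e)$, and by symmetry $l(R/I'_e) \le l(R/I_e) + O(p^{e(d-1)})$. (4) Divide by $p^{ed}$ and take $e \to \infty$: the two limits coincide. The same argument applies verbatim to the triple case with $\mathfrak{D}'_e$ (in the sense of the paper's $\mathfrak{a}$-modification) in place of $\mathfrak{D}_e$, since all that was used is the $R^{1/p^e}$-module structure and the two-sided containment by $c^{1/p^e}$.

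The main obstacle I anticipate is step~(3): making the bound on $l(R/(I'_e + cR))$ precise and genuinely $o(p^{ed})$ uniformly in $e$. One needs that $R/(I'_e + cR)$ is a quotient of $R/cR$, whose dimension is $d-1$, and that its length is controlled by a Hilbert–Kunz-type estimate $O(p^{e(d-1)})$; this requires knowing $I'_e \supseteq \mathfrak{m}^{[p^e]}$ or at least contains a fixed Frobenius power of $\mathfrak{m}$, which follows from finite generation of $R^{1/p^e}$ and the defining property of $\mathfrak{D}'_e$ together with $\mathfrak{D}'_e$ being contained in $\mathfrak{C}_e$ (so $I'_e$ is an $\mathfrak{m}$-primary ideal for $e \gg 0$, or one passes to $I'_e \cap$ an auxiliary $\mathfrak{m}$-primary ideal). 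Everything else is a routine manipulation of ideal quotients and the colon trick $(I'_e : c) \subset I_e$.
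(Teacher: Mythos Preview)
The paper does not prove this lemma; it is quoted from \cite{BST11} without argument. Your overall strategy---compare $I_e$ and $I'_e$ via colon ideals by $c$, then bound the discrepancy by a length over $R/cR$ which has dimension $d-1$---is exactly the standard one, and would succeed. However, your step~(2) has the implication reversed, and this propagates into step~(3).

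Concretely: from $r\notin I_e$ you produce $\psi=c^{1/p^e}\cdot\phi\in\mathfrak{D}'_e$ with $\psi(r^{1/p^e})=\phi((cr)^{1/p^e})$. But you only know $\phi(r^{1/p^e})\notin m$, not $\phi((cr)^{1/p^e})\notin m$, so this does \emph{not} give $cr\notin I'_e$. The correct deduction runs the other way: starting from $cr\notin I_e$ (so some $\phi\in\mathfrak{D}_e$ has $\phi((cr)^{1/p^e})\notin m$), the same $\psi\in\mathfrak{D}'_e$ satisfies $\psi(r^{1/p^e})=\phi((cr)^{1/p^e})\notin m$, whence $r\notin I'_e$. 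Contrapositively, $r\in I'_e\Rightarrow cr\in I_e$, i.e.\ $I'_e\subset (I_e:c)$; symmetrically $I_e\subset (I'_e:c)$. These are the \emph{opposite} containments to the ones you wrote.

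With the corrected containments the rest of your outline goes through: from $I_e\subset (I'_e:c)$ and the exact sequence $0\to R/(I'_e:c)\xrightarrow{\cdot c} R/I'_e\to R/(I'_e+cR)\to 0$ one gets $l(R/I_e)\ge l(R/(I'_e:c))=l(R/I'_e)-l(R/(I'_e+cR))$, and since $\mathfrak{D}'_e\subset\mathfrak{C}_e$ forces $m^{[p^e]}\subset I'_e$, the term $l(R/(I'_e+cR))\le l\bigl((R/cR)/m^{[p^e]}(R/cR)\bigr)=O(p^{e(d-1)})$. The symmetric inequality and division by $p^{ed}$ finish the argument.
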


This lemma allows us to make many simplifications in our $F$-signature computations:

\begin{lem}\label{ApproxTriples}(\cite{BST11}, discussion following Lemma 4.17.)
Let $R, D, \mathfrak{a}, t$ be as in Definition \ref{TriplesDef}.

Suppose that for some sequence of divisors $D'_e$, the coefficients of $(p^e-1)D-D'_e$ are bounded. Then replacing $\mathfrak{D}_e = Hom_R(R(\lceil (p^e-1)D\rceil)^{1/p^e}, R)$ with $Hom_R(R(\lceil D'_e\rceil)^{1/p^e}, R)$ in Definition \ref{TriplesDef} does not change $s(R, D, \mathfrak{a}^t)$. (In particular, we may replace $(p^e-1)D$ by $p^eD$ in the limits given in Definition \ref{TriplesDef} without changing the $F$-signature.) If we write $D = \sum_i a_i D_i$, then $s(R, D, \mathfrak{a}^t)$ is continuous in $a_i$.

Likewise, suppose $\lceil(p^e-1)t\rceil - t'_e$ is bounded for some sequence of exponents $t'_e$. Then replacing $\lceil(p^e-1)t\rceil$ with $\lceil t'_e\rceil$ in the definition does not change $s(R, D, \mathfrak{a}^t)$. Also, $s(R, D, \mathfrak{a}^t)$ is continuous in $t$.

Finally, replacing $\mathfrak{a}^{t'_e}$ with its integral closure $\overline{\mathfrak{a}^{t'_e}}$ does not change the $F$-signature.
\end{lem}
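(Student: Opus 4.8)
The plan is to derive all three assertions from the preceding lemma (\cite{BST11}, Lemma 4.17) by exhibiting, in each case, a single nonzero $c\in R$ that ``sandwiches'' the two families of submodules of $\mathfrak{C}_e$ being compared, \emph{uniformly in $e$}. The mechanical fact I will use repeatedly is that, under the premultiplication module structure, $c^{1/p^e}$ carries $\Hom_R(R(\Delta)^{1/p^e},R)$ into $\Hom_R(R(\Delta')^{1/p^e},R)$ exactly when $c\cdot R(\Delta')\subseteq R(\Delta)$, i.e.\ when $\divf c\geq \Delta'-\Delta$; and that such a containment is automatically inherited by the $\mathfrak{a}$-twisted versions $\mathfrak{D}_e\cdot F^e_*(\mathfrak{a}^{\bullet})$, since premultiplication by $c^{1/p^e}$ commutes with the generators $[x\mapsto\phi(a^{1/p^e}x)]$, turning such a generator into $[x\mapsto\phi((ca)^{1/p^e}x)]$. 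So in each case it suffices to produce the sandwiching $c$ and invoke Lemma 4.17.

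For the divisor substitution, set $E_e=\lceil(p^e-1)D\rceil-\lceil D'_e\rceil$. By hypothesis the coefficients of $(p^e-1)D-D'_e$ are bounded, and rounding moves each coefficient by less than $1$, so the coefficients of $E_e$ are bounded by a constant $B$ independent of $e$; in all the cases we need the $D'_e$ are supported on $\operatorname{Supp}D$, so the $E_e$ are all supported on a fixed finite set of prime divisors. Let $F$ be the effective divisor on that set with every coefficient $B$, and choose $0\neq c\in R(-F)$, a nonzero ideal since $F$ is effective and $R$ is a normal domain. Then $\divf c\geq F\geq \pm E_e$ for every $e$, so $c^{1/p^e}$ sandwiches $\Hom_R(R(\lceil(p^e-1)D\rceil)^{1/p^e},R)$ and $\Hom_R(R(\lceil D'_e\rceil)^{1/p^e},R)$ in both directions; Lemma 4.17 then gives that the limits defining $s(R,D,\mathfrak{a}^t)$ are unchanged. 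Taking $D'_e=p^eD$ (so $(p^e-1)D-D'_e=-D$ has constant coefficients) yields the stated special case.

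The exponent substitution for $\mathfrak{a}$ is the same argument with $\mathfrak{a}$ in place of $R(-F)$: if $\bigl|\lceil(p^e-1)t\rceil-\lceil t'_e\rceil\bigr|\leq B$ for all $e$, pick $0\neq c\in\mathfrak{a}^{B}$; then $c\,\mathfrak{a}^{m}\subseteq\mathfrak{a}^{B+m}\subseteq\mathfrak{a}^{n}$ whenever $n-m\leq B$, so $c^{1/p^e}$ sandwiches the two $\mathfrak{a}$-twisted families and Lemma 4.17 applies. For the integral-closure statement, $\mathfrak{a}^{n}\subseteq\overline{\mathfrak{a}^{n}}$ gives the twisted module for $\mathfrak{a}^{n}$ inside that for $\overline{\mathfrak{a}^{n}}$ with $c=1$ already; for the reverse, use that $R$, being $F$-finite, is excellent, so the integral closure of the Rees algebra $\bigoplus_n\mathfrak{a}^n$ is module-finite over it, yielding a fixed $\ell$ with $\overline{\mathfrak{a}^{n+\ell}}\subseteq\mathfrak{a}^{n}$ for all $n$. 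Choosing $0\neq c\in\mathfrak{a}^{\ell}$, one gets $c\cdot\overline{\mathfrak{a}^{n}}\subseteq\mathfrak{a}^{\ell}\,\overline{\mathfrak{a}^{n}}\subseteq\overline{\mathfrak{a}^{n+\ell}}\subseteq\mathfrak{a}^{n}$, which is exactly the sandwich (in both directions, since $c^{1/p^e}$ also sends the $\mathfrak{a}^n$-twist into itself, hence into the $\overline{\mathfrak{a}^n}$-twist) needed for Lemma 4.17.

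Finally, the continuity statements do \emph{not} follow from the substitution freedom alone, because perturbing $D=\sum a_iD_i$ to a nearby $D''=\sum a''_iD_i$ changes $p^eD$ by $\sum p^e(a''_i-a_i)D_i$, whose coefficients are unbounded in $e$; this is where the real work lies. The approach I would take is a monotone squeeze: $D\leq D'$ (effective $\mathbb{Q}$-divisors) forces $\mathfrak{D}_e^{D'}\subseteq\mathfrak{D}_e^{D}$, hence $I_e^{\mathfrak{a},D}\subseteq I_e^{\mathfrak{a},D'}$, hence $s(R,D,\mathfrak{a}^t)\geq s(R,D',\mathfrak{a}^t)$ (and similarly in $t$); combining with this monotonicity, continuity reduces to showing that for a small effective $\mathbb{Q}$-divisor $\Delta$ one has $s(R,D,\mathfrak{a}^t)-s(R,D+\Delta,\mathfrak{a}^t)\to 0$ as the coefficients of $\Delta$ tend to $0$, which is a colength estimate bounding $l\bigl(I_e^{\mathfrak{a},D+\Delta}/I_e^{\mathfrak{a},D}\bigr)$ by $O(p^{ed})$ with implied constant controlled by (and shrinking with) the size of $\Delta$; the $t$-continuity is identical, using that $\mathfrak{a}^{\lceil(p^e-1)t\rceil}$ and $\mathfrak{a}^{\lceil(p^e-1)t'\rceil}$ differ by a power growing like $p^e|t-t'|$. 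I expect this quantitative estimate to be the only genuine obstacle; the three substitution claims are each a one-line application of Lemma 4.17 once the uniform-in-$e$ sandwiching element is in hand.
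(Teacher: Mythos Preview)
The paper does not supply its own proof of this lemma: it is stated with the citation ``(\cite{BST11}, discussion following Lemma 4.17)'' and used as a black box, so there is no in-paper argument to compare against. Your approach---reducing each substitution claim to the preceding lemma by exhibiting a single $c$ with $c^{1/p^e}$ sandwiching the two families uniformly in $e$---is exactly the intended mechanism, and your verifications for the divisor, exponent, and integral-closure cases are correct. The one assumption you flag (that the $D'_e$ share a common finite support so that a single effective $F$ dominates all the $|E_e|$) is indeed needed and is implicit in how the lemma is applied; you are right to make it explicit.

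You are also right that the continuity statements are where the genuine content lies and do \emph{not} reduce to Lemma~4.17, since the perturbation grows like $p^e$. Your monotonicity-plus-squeeze outline is the standard route (and is what \cite{BST11} does), but note that you have left the key colength estimate as an expectation rather than an argument; completing it requires a uniform bound of the shape $l(R/I_e^{\mathfrak{a},D})-l(R/I_e^{\mathfrak{a},D+\Delta})\leq C(\Delta)\,p^{ed}$ with $C(\Delta)\to 0$, typically obtained by comparing with a Hilbert--Kunz-type count along the support of $\Delta$. Since the present paper only invokes continuity to pass to coefficients in $\tfrac{1}{p^{e_0}}\mathbb{Z}$ and then appeals to \cite{BST11}, your level of detail matches what the paper itself relies on.
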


Lemma \ref{ApproxTriples} will allow us to simplify our computations later by, for example, assuming that $t$ and the coefficients $a_i$ of $D$ are rational numbers with denominator a power of $p$.

We also have, in the triples case:

\begin{lem}\label{TriplesInterp}
Suppose that we are in the setting of Definition \ref{TriplesDef}. Then $I_e^{\mathfrak{a}} = (I_e^D:\mathfrak{a}^{\lceil t(p^e-1)\rceil})$. Equivalently, set $a_e^{\mathfrak{a}} = l(F^e_*R/F^e_*I_e^{\mathfrak{a}})$, so that $s(R, D, \mathfrak{a}^t) = \lim_{e\to\infty} \frac{a_e^{\mathfrak{a}}}{p^{e(d+\alpha)}}$. Then $I_e^{\mathfrak{a}} = (I_e^D:_R \mathfrak{a}^{\lceil t(p^e-1)\rceil})$, and $$a_e^{\mathfrak{a}} = l(F^e_*R/(F_*^e I_e^D:_{R^{1/p^e}} F_*^e\mathfrak{a}^{\lceil t(p^e-1)\rceil})).$$
\end{lem}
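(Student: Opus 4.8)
## Proof Proposal for Lemma \ref{TriplesInterp}

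The plan is to unwind the definitions of $I_e^{\mathfrak{a}}$ and $I_e^D$ from Definition \ref{TriplesDef} and to recognize the defining condition for $I_e^{\mathfrak{a}}$ as an ideal-quotient (colon ideal) condition, using the module structure on $\mathfrak{D}'_e$ coming from premultiplication. The two assertions of the lemma are really the same statement, phrased once downstairs in $R$ and once after applying the exact functor $F^e_*$, so the only real content is the colon-ideal identification; the length computation is then formal.

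First I would fix $e$ and write $q = p^e$, $b = \lceil t(q-1)\rceil$, and recall that by definition $\mathfrak{D}'_e$ is the $R^{1/q}$-submodule of $\mathfrak{D}_e$ generated by the maps $x \mapsto \phi(a^{1/q}x)$ for $a \in \mathfrak{a}^{b}$ and $\phi \in \mathfrak{D}_e$. The key observation is the chain of equivalences: for $r \in R$,
\[
r \in I_e^{\mathfrak{a}} \iff \forall \psi \in \mathfrak{D}'_e,\ \psi(r^{1/q}) \in m \iff \forall a \in \mathfrak{a}^{b},\ \forall \phi \in \mathfrak{D}_e,\ \phi\bigl((ar)^{1/q}\bigr) \in m.
\]
The first equivalence is the definition of $I_e^{\mathfrak{a}}$; the second uses that the generating maps $x\mapsto\phi(a^{1/q}x)$ evaluated at $r^{1/q}$ give exactly $\phi(a^{1/q}r^{1/q}) = \phi((ar)^{1/q})$, and that it suffices to test membership in $m$ on a generating set of $\mathfrak{D}'_e$ as an $R^{1/q}$-module (here one uses that $m$ is an ideal of $R$, so $m$ is closed under multiplication by $R^{1/q}$ applied inside the $\Hom$, i.e. if $\phi(x)\in m$ then $\phi(s^{1/q}x) = (s^{1/q}\cdot\phi)(x)$ need not lie in $m$ — so care is needed here; actually the correct point is that a general element of $\mathfrak{D}'_e$ is an $R^{1/q}$-linear combination $\sum s_j^{1/q}\psi_j$ of the generators, and $(\sum s_j^{1/q}\psi_j)(r^{1/q}) = \sum \phi_j((s_j a_j r)^{1/q})$ with $s_j a_j \in \mathfrak{a}^b$, so the condition over all of $\mathfrak{D}'_e$ reduces to the condition over the generators). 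The last line says precisely that $ar \in I_e^D$ for every $a \in \mathfrak{a}^b$, i.e. $r \in (I_e^D :_R \mathfrak{a}^b)$. This establishes $I_e^{\mathfrak{a}} = (I_e^D :_R \mathfrak{a}^{\lceil t(q-1)\rceil})$.

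For the second, equivalent, formulation I would apply $F^e_*$, which is an exact (indeed flat, being Frobenius on a reduced ring, hence faithfully flat in the $F$-finite setting) functor, and note that it commutes with the formation of colon ideals in the relevant sense: $F^e_*(I_e^D :_R \mathfrak{a}^b) = (F^e_* I_e^D :_{R^{1/q}} F^e_*\mathfrak{a}^b)$, because an element $r^{1/q} \in R^{1/q} = F^e_* R$ satisfies $r^{1/q}\cdot (F^e_*\mathfrak{a}^b) \subset F^e_* I_e^D$ in the $R^{1/q}$-module $F^e_*R$ exactly when $a r \in I_e^D$ for all $a \in \mathfrak{a}^b$. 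Then $a_e^{\mathfrak{a}} = l(F^e_* R / F^e_* I_e^{\mathfrak{a}}) = l(F^e_* R / (F^e_* I_e^D :_{R^{1/q}} F^e_*\mathfrak{a}^b))$, which is the displayed formula; the identity $s(R,D,\mathfrak{a}^t) = \lim_e a_e^{\mathfrak{a}}/p^{e(d+\alpha)}$ follows from Remark \ref{IeDef} applied to the triple, exactly as in the pair case (Lemma \ref{PairsInterp}).

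The main obstacle is bookkeeping around the $R^{1/q}$-module structure on $\mathfrak{C}_e$ by premultiplication: one must be careful that testing "$\psi(r^{1/q}) \in m$ for all $\psi \in \mathfrak{D}'_e$" genuinely reduces to testing it on the $R^{1/q}$-module generators, and that pulling the scalar $s^{1/q}$ through $\phi$ turns into replacing $a$ by $sa$ inside $\mathfrak{a}^b$ — this is where the fact that $\mathfrak{D}'_e$ is defined as the $R^{1/q}$-submodule \emph{generated by} those maps (rather than just their $R$-span) is used, and it is what makes the colon against the whole ideal $\mathfrak{a}^b$ come out cleanly rather than against some $R$-submodule of it. Everything else is formal manipulation of colon ideals and the exact functor $F^e_*$.
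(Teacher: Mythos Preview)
Your proof is correct and follows essentially the same route as the paper's: unwind the definition of $\mathfrak{D}'_e$, observe that the condition $\psi(r^{1/q}) \in m$ for all $\psi \in \mathfrak{D}'_e$ reduces (via the premultiplication module structure and the fact that $\mathfrak{a}^b$ is an ideal, so $sa \in \mathfrak{a}^b$) to $\phi((ar)^{1/q}) \in m$ for all $\phi \in \mathfrak{D}_e$ and $a \in \mathfrak{a}^b$, which is exactly the colon condition $r \in (I_e^D : \mathfrak{a}^b)$. One small aside: your parenthetical that $F^e_*$ is ``flat, being Frobenius on a reduced ring'' is not right---flatness of Frobenius characterizes regularity (Kunz)---and is in any case unnecessary, since restriction of scalars is trivially exact and you verify the $F^e_*$-colon identity directly from the definitions anyway.
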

\begin{proof}
Requiring that $r\in (I_e^D:\mathfrak{a}^{\lceil t(p^e-1)\rceil})$ is the same as requiring that multiplication by an element $a\in (\mathfrak{a}^{\lceil t(p^e-1)\rceil})^{1/p^e}$ sends $r^{1/p^e}$ into $(I_e^D)^{1/p^e}$, so that for all $\phi\in\mathfrak{D}_e$, $\phi(a\cdot r^{1/p^e})\in m$. This is equivalent to saying that for all $\phi\in\mathfrak{D}'_e, \phi(r^{1/p^e})\in m$. We conclude that $r\in (I_e^D:\mathfrak{a}^{\lceil t(p^e-1)\rceil})\iff r\in I_e^{\mathfrak{a}}$. We have proved our first claim; the second claim follows immediately.
\end{proof}

\subsection{Toric preliminaries for pairs and triples}

For our pair and triple computations, we will require some understanding of divisors on toric varieties. (Unless stated otherwise, proofs of these results may be found in \cite{Ful93}).

\begin{defn}
A prime Weil divisor $D$ on a toric variety $X$ is \emph{torus-invariant} if it is invariant under the action of the embedded torus on $X$. More generally, a divisor $D$ is torus-invariant if $D = \sum_i a_i D_i$, where $D_i$ are the torus-invariant prime divisors of $X$.
\end{defn}

The torus-invariant prime divisors of $X$ are in bijective correspondence with primitive generators $\vec{v}_i$ of the cone $\sigma\subset N_{\mathbb{R}}$ of $X$. (In particular, the prime divisor corresponding to $\vec{v}_i$ is $D_i = \mathbb{V}(I_i)$, where $I_i$ is the ideal generated by monomials $\vec{u}$ such that $\vec{u}\cdot \vec{v}_i \neq 0$.) It can be shown that $\nu_i: \Frac(R)\to\mathbb{Z}$, the discrete valuation corresponding to $D_i$, is given by $\nu_i(\vec{u}) = \vec{u}\cdot \vec{v}_i$. From this, it follows that:

\begin{lem}\label{RTwist}
Let $X = \Spec R$ be an affine toric variety, $R = k[S]$, $S = \sigma^{\dual}\cap M\subset M_{\mathbb{R}}$. Let $D=\sum_i a_i D_i$ be a torus-invariant divisor on $X$, where each $D_i$ corresponds to a primitive generator $\vec{v}_i$ of $\sigma$. Then $R(D) = \sum_{\vec{u}} R\cdot x^{\vec{u}}$, where the sum is taken over all $\vec{u}\in S$ such that $\vec{u}\cdot \vec{v}_i\geq -a_i$.
\end{lem}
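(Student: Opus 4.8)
\textbf{Proof proposal for Lemma \ref{RTwist}.}

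The plan is to compute $R(D)$ directly from its definition as $\{f \in \Frac(R) \suchthat \divf f + D \geq 0\}$, using the fact that both sides of the claimed equality are graded $R$-submodules of $k[M]$, so it suffices to determine which monomials lie in each. First I would recall that every element of $\Frac(R)$ can be written as a $k$-linear combination of monomials $\chi^{\vec{u}}$ with $\vec{u} \in M = \Lattice(S)$ (by Lemma \ref{ConeLattice}), and that $R(D)$ is torus-invariant, hence graded: $f \in R(D)$ if and only if each monomial appearing in $f$ lies in $R(D)$. This reduces the problem to characterizing the monomials $\chi^{\vec{u}} \in R(D)$.

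The next step uses the description of the valuations $\nu_i$ supplied in the excerpt immediately before the statement: $\nu_i(\chi^{\vec{u}}) = \vec{u} \cdot \vec{v}_i$, and the $\nu_i$ are exactly the valuations corresponding to the torus-invariant prime divisors $D_i$. Since on an affine toric variety the $D_i$ generate the group of torus-invariant divisors and the principal divisor of a monomial is $\divf \chi^{\vec{u}} = \sum_i (\vec{u} \cdot \vec{v}_i) D_i$, the condition $\divf \chi^{\vec{u}} + D \geq 0$ becomes $\vec{u} \cdot \vec{v}_i + a_i \geq 0$ for all $i$, i.e. $\vec{u} \cdot \vec{v}_i \geq -a_i$ for all $i$. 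This shows that the set of monomials in $R(D)$ is precisely $\{\chi^{\vec{u}} \suchthat \vec{u} \in M,\ \vec{u} \cdot \vec{v}_i \geq -a_i\ \forall i\}$, so as a $k$-vector space $R(D) = \bigoplus_{\vec{u}} k \cdot \chi^{\vec{u}}$ over this index set.

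It then remains to recognize this $k$-span as $\sum_{\vec{u} \in S,\ \vec{u} \cdot \vec{v}_i \geq -a_i} R \cdot \chi^{\vec{u}}$. One inclusion is immediate: each generator $\chi^{\vec{u}}$ in the sum lies in $R(D)$, and $R(D)$ is an $R$-module. For the reverse inclusion, given $\vec{u} \in M$ with $\vec{u} \cdot \vec{v}_i \geq -a_i$ for all $i$, I would find $\vec{s} \in S = \sigma^{\dual} \cap M$ with $\vec{u} + \vec{s}$ still satisfying the inequalities; then $\chi^{\vec{u}} = \chi^{\vec{s}} \cdot \chi^{\vec{u} + \vec{s}}$ (wait — one needs $\chi^{\vec{u}} \in R \cdot \chi^{\vec{u} - \vec{s}}$, so more precisely: write $\chi^{\vec{u}} = \chi^{\vec{u}'} \cdot \chi^{\vec{u} - \vec{u}'}$ where $\vec{u}' \in S$ has large enough pairings; since $\vec{u}' \cdot \vec{v}_i$ can be made arbitrarily large while $(\vec{u} - \vec{u}') \cdot \vec{v}_i \geq -a_i$ is arranged by choosing $\vec{u}'$ in the interior of $\sigma^{\dual}$ scaled up, using that the pairings $\vec{u}' \cdot \vec{v}_i$ are then positive). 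Concretely, pick $\vec{w} \in M$ lying in the interior of $\sigma^{\dual}$, so $\vec{w} \cdot \vec{v}_i > 0$ for all $i$; for $m \gg 0$ both $m\vec{w} \in S$ and $\vec{u} - m\vec{w}$ is not needed — rather $\vec{u} + m\vec{w} \in S$ with $(\vec{u} + m\vec{w}) \cdot \vec{v}_i \geq -a_i$, hence $\chi^{\vec{u} + m\vec{w}}$ is among the listed generators, but that writes the wrong monomial. The cleanest route: observe $\chi^{\vec{u}} = \chi^{-m\vec{w}} \cdot \chi^{\vec{u} + m\vec{w}}$ is wrong since $\chi^{-m\vec{w}} \notin R$; instead just note the generating set already includes $\chi^{\vec{u}}$ itself whenever $\vec{u} \in S$, so the genuine content is only that monomials $\chi^{\vec{u}}$ with $\vec{u} \notin S$ but satisfying the inequalities are $R$-combinations of the listed ones — and $\vec{u} \cdot \vec{v}_i \geq -a_i$ with $a_i$ possibly negative forces, for $a_i \leq 0$, $\vec{u} \cdot \vec{v}_i \geq 0$, so if all $a_i \leq 0$ then $\vec{u} \in S$ automatically; in general one multiplies by $\chi^{\vec{s}}$ for suitable $\vec{s} \in S$ to land in $S$ and checks the inequalities are preserved since adding an element of $S$ only increases each pairing. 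I expect this last bookkeeping step — verifying that the monomial $k$-span coincides with the $R$-module generated by the sublist of monomials indexed by $S$ — to be the only mildly delicate point, and it is entirely routine convex-geometry manipulation; the substantive input (the formula $\nu_i(\vec{u}) = \vec{u} \cdot \vec{v}_i$) is quoted from \cite{Ful93} just before the statement.
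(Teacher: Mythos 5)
Your first two paragraphs are correct and follow the intended route: the paper gives no proof of this lemma (it defers to \cite{Ful93}), and the standard argument is exactly the one you give --- $D$ is torus-invariant, so $R(D)$ is an $M$-graded subspace of $k[M]$ and is therefore spanned by the monomials it contains, and $\divf \chi^{\vec{u}} = \sum_i (\vec{u}\cdot\vec{v}_i)D_i$ shows that $\chi^{\vec{u}}\in R(D)$ if and only if $\vec{u}\cdot\vec{v}_i\geq -a_i$ for all $i$.

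The difficulty you run into in your last paragraph is real, but it is located in the statement, not in your argument: the condition ``$\vec{u}\in S$'' in the lemma is a typo for ``$\vec{u}\in M$.'' As literally printed the lemma is false whenever some $a_i\geq 1$: take $\sigma$ the first orthant in $\mathbb{R}^2$, so $R=k[x,y]$, and $D=D_1$. Every $\vec{u}\in S=\sigma^{\dual}\cap M$ already satisfies $\vec{u}\cdot\vec{v}_i\geq 0\geq -a_i$, so the right-hand side is all of $R$, whereas $R(D_1)=\frac{1}{x}k[x,y]\ni x^{-1}$. Your proposed repair --- multiplying by suitable $\chi^{\vec{s}}$ with $\vec{s}\in S$ ``to land in $S$'' --- cannot succeed, because $S$ is closed under addition: an $R$-multiple of a monomial indexed by $S$ is again indexed by $S$, so the $R$-module generated by monomials with exponents in $S$ contains no monomial $\chi^{\vec{u}}$ with $\vec{u}\notin S$. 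That the intended index set is $M$ is confirmed by how the lemma is invoked in the proof of Theorem \ref{FSignatureOfPairs}, where $R(p^eD)$ is said to be generated by $\{\chi^{\vec{v}}\suchthat \vec{v}\in M,\ \vec{v}\cdot\vec{v}_i\geq -p^ea_i\}$. With $M$ in place of $S$, the step you were agonizing over is immediate: each listed $\chi^{\vec{u}}$ lies in $R(D)$ and $R(D)$ is an $R$-module, giving one inclusion, and the other inclusion holds because $R(D)$ is the $k$-span of the listed monomials, which is contained in the $R$-module they generate. So: delete the third paragraph, replace $S$ by $M$, and your proof is complete.
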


For our $F$-signature of triples computation, we will require the concept of the Newton polyhedron of a monomial ideal.

\begin{defn}
A \emph{polyhedron} is a possibly unbounded intersection of finitely many half-spaces.
\end{defn}

\begin{defn}
Let $R = k[S]$ be a monomial ring, with $S\subset M_{\mathbb{R}}$, as above. Let $\mathfrak{a}\subset R$ be a monomial ideal (i.e., an ideal generated by monomials). The \emph{Newton polyhedron} of $\mathfrak{a}$, a polyhedron in $M_{\mathbb{R}}$, is the convex hull of the set of monomials in $\mathfrak{a}$.
\end{defn}

The Newton polyhedron is closely related to the integral closure of monomial ideals:

\begin{lem} (\cite{Vil01}, Proposition 7.3.4)
Let $R = k[S]$ be a monomial ring as above. Let $\mathfrak{a}\subset R$ be a monomial ideal. Then the integral closure $\overline{\mathfrak{a}}$ of $\mathfrak{a}$ in $R$ is a monomial ideal generated by those monomials in the set $\Newt(\mathfrak{a})\cap M$.
\end{lem}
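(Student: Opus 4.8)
The plan is to establish the statement in two steps: first that $\overline{\mathfrak{a}}$ is again a monomial ideal, and then that its monomials are precisely the $\chi^{\vec{u}}$ with $\vec{u}\in\Newt(\mathfrak{a})\cap M$. Once both sides are known to be monomial ideals, it suffices to check, for $\vec{u}\in S$, that $\chi^{\vec{u}}\in\overline{\mathfrak{a}}$ if and only if $\vec{u}\in\Newt(\mathfrak{a})$. Throughout I will use two elementary facts: writing $\chi^{\vec{b}_1},\dots,\chi^{\vec{b}_s}$ for a monomial generating set of $\mathfrak{a}$, one has $\Newt(\mathfrak{a})=\mathrm{conv}\{\vec{b}_1,\dots,\vec{b}_s\}+\sigma^{\dual}$ (because the cone generated by $S$ is $\sigma^{\dual}$), and consequently $\Newt(\mathfrak{a}^i)=i\,\Newt(\mathfrak{a})$ for every $i\geq1$ (the $i$-fold sumset of $\{\vec{b}_1,\dots,\vec{b}_s\}$ has convex hull $i\,\mathrm{conv}\{\vec{b}_j\}$, and $i\,\sigma^{\dual}=\sigma^{\dual}$).

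For the first step: a monomial ideal is homogeneous for the $M$-grading on $R$; the integral closure of a homogeneous ideal is homogeneous (a standard fact, visible e.g. from the torus action on $\Spec R$, under which $\mathfrak{a}$ is stable and hence so is $\overline{\mathfrak{a}}$, integral dependence being preserved by ring automorphisms); and a homogeneous ideal of $R=k[S]$ is a monomial ideal because each graded piece of $R$ is spanned over $k$ by a single monomial. Hence $\overline{\mathfrak{a}}$ is a monomial ideal.

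For the ``only if'' part, suppose $\chi^{\vec{u}}\in\overline{\mathfrak{a}}$, with integral dependence relation $\chi^{m\vec{u}}+c_1\chi^{(m-1)\vec{u}}+\dots+c_m=0$, where $c_i\in\mathfrak{a}^i$. Pick out the $M$-homogeneous component of degree $m\vec{u}$: since $\mathfrak{a}^i$ is a monomial (hence $M$-graded) ideal, the degree-$i\vec{u}$ part of $c_i$ equals $\gamma_i\chi^{i\vec{u}}$ for some $\gamma_i\in k$, and $\gamma_i\neq0$ forces $\chi^{i\vec{u}}\in\mathfrak{a}^i$. The component of the relation reads $(1+\gamma_1+\dots+\gamma_m)\chi^{m\vec{u}}=0$, so some $\gamma_i$ is nonzero; for that $i$ we get $i\vec{u}\in\Newt(\mathfrak{a}^i)=i\,\Newt(\mathfrak{a})$, hence $\vec{u}\in\Newt(\mathfrak{a})\cap M$.

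For the ``if'' part, suppose $\vec{u}\in\Newt(\mathfrak{a})\cap M$. For each $n\geq1$ we have $n\vec{u}\in n\,\Newt(\mathfrak{a})=\Newt(\mathfrak{a}^n)$, so $n\vec{u}=\sum_j c_j\vec{b}_j+\vec{q}_n$ with $c_j\geq0$, $\sum_j c_j=n$, and $\vec{q}_n\in\sigma^{\dual}$. Rounding each $c_j$ down and absorbing the resulting deficit (at most $s$) into extra copies of $\vec{b}_1$ produces an integer vector $\vec{m}_n=\sum_j n_j\vec{b}_j$ with $n_j\in\mathbb{Z}_{\geq0}$, $\sum_j n_j=n$ — so $\chi^{\vec{m}_n}\in\mathfrak{a}^n$ — such that $n\vec{u}-\vec{m}_n=\vec{r}_n+\vec{q}_n$, where $\vec{r}_n$ is a combination of the $\vec{b}_j$ bounded uniformly in $n$ and $\vec{q}_n\in\sigma^{\dual}$. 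Since $\sigma$ is strongly convex, $\sigma^{\dual}$ is full-dimensional (Lemma \ref{FullDimensionalToStronglyConvex}), so I may fix $\vec{w}\in M$ in the interior of $\sigma^{\dual}$ far enough from $\partial\sigma^{\dual}$ that $\vec{w}+\vec{r}_n\in\sigma^{\dual}$ for all $n$. Then $\vec{w}+(n\vec{u}-\vec{m}_n)=\vec{w}+\vec{r}_n+\vec{q}_n\in\sigma^{\dual}\cap M=S$, so $\chi^{\vec{w}+n\vec{u}}=\chi^{\vec{m}_n}\cdot\chi^{\vec{w}+n\vec{u}-\vec{m}_n}\in\mathfrak{a}^n$ for every $n\geq1$. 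Taking $c=\chi^{\vec{w}}\neq0$, the standard criterion that an element $r$ of a Noetherian domain with $cr^n\in\mathfrak{a}^n$ for all $n$ lies in $\overline{\mathfrak{a}}$ gives $\chi^{\vec{u}}\in\overline{\mathfrak{a}}$. The main obstacle is precisely this conversion of the \emph{real} convex combination witnessing $n\vec{u}\in n\,\Newt(\mathfrak{a})$ into an \emph{integral} one valid for all $n$ simultaneously: the key point is that the rounding error $\vec{r}_n$ stays bounded as $n\to\infty$, so one fixed deep interior lattice point $\vec{w}$ absorbs it uniformly — and it is exactly here that strong convexity of $\sigma$ (equivalently, full-dimensionality of $\sigma^{\dual}$) is used. (Alternatively, if one knows the Rees valuations of a monomial ideal of a toric ring to be the monomial valuations $\chi^{\vec{m}}\mapsto\vec{m}\cdot\vec{w}$, $\vec{w}\in\sigma$, both directions follow at once from the valuative criterion for integral closure plus the half-space description of $\Newt(\mathfrak{a})$; but proving that the Rees valuations are monomial is comparable work.)
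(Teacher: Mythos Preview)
Your proof is correct. The paper does not actually prove this lemma: it is stated with a citation to \cite{Vil01}, Proposition~7.3.4, and no argument is given. So there is no in-paper proof to compare against.

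For what it is worth, your argument is the standard one and is carried out cleanly. The ``only if'' direction via extracting the $M$-homogeneous component of an integral dependence relation is the usual reduction, and the key step in the ``if'' direction---rounding the real convex combination $\sum c_j\vec b_j$ to an integer one while controlling the error by a fixed interior lattice point $\vec w\in\sigma^{\dual}$---is exactly the right idea and is correctly justified (the error $\vec r_n$ is bounded independently of $n$, and full-dimensionality of $\sigma^{\dual}$ supplies the needed $\vec w$). One tiny cosmetic remark: the coefficients $c_j$ in the decomposition $n\vec u=\sum_j c_j\vec b_j+\vec q_n$ of course depend on $n$; your bound on $\vec r_n$ is uniform regardless, so this does not affect anything.
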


\begin{defn}
Let $Q_1, Q_2$ be subsets of $\mathbb{R}^n$. The \emph{Minkowski sum} of $Q_1$ and $Q_2$, denoted $Q_1 + Q_2$, is the set $\{\vec{u}_1 + \vec{u}_2\suchthat \vec{u}_1\in Q_1, \vec{u}_2\in Q_2\}$. We denote by $Q_1 - Q_2$ the set $Q_1 + (-Q_2) = \{\vec{u}_1 - \vec{u}_2\suchthat \vec{u}_1\in Q_1, \vec{u}_2\in Q_2\}$.
\end{defn}

It's easy to see that the Minkowski sum of two polyhedrons is itself a polyhedron, and that the sum of two polytopes is a polytope. (See, for example, \cite{Gru03}, \S 15.1.)

As Corollary \ref{GorensteinTriples} is a statement about $\mathbb{Q}$-Gorenstein pairs, we recall the definition of the $\mathbb{Q}$-Gorenstein condition.

\begin{defn}If $D$ is an effective divisor on $X = \Spec R$, the pair $(R, D)$ is \emph{$\mathbb{Q}$-Gorenstein} if, fixing a canonical divisor $K_X$ on $X$, the divisor $K_X+D$ is $\mathbb{Q}$-Cartier; that is, some integer multiple of $K_X+D$ is Cartier.
\end{defn}

It happens that on a toric variety, a canonical divisor may be given by $K_X = -\sum_i D_i$, where the sum is taken over all torus-invariant divisors on $X$. It's also a fact that Cartier divisors on an affine toric variety are principal. We conclude:

\begin{lem}\label{QGorensteinPair}
Let $X = \Spec k[S]$ be an affine toric variety with a corresponding strongly convex polyhedral cone $\sigma$. Let $D = \sum_i a_i D_i$ be a divisor. Then $(R, D)$ is $\mathbb{Q}$-Gorenstein if and only if for some $\vec{w}\in M\otimes\mathbb{Q}$, $\vec{w}\cdot\vec{v}_i = -1+a_i$ for each $i$.
\end{lem}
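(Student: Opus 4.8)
The plan is to unwind the definition of $\mathbb{Q}$-Gorenstein directly, using the two toric facts recalled just above: that $K_X=-\sum_i D_i$ is a canonical divisor, and that a Cartier divisor on an affine toric variety is principal. Substituting $K_X=-\sum_i D_i$ rewrites $K_X+D=\sum_i(a_i-1)D_i$, a torus-invariant $\mathbb{Q}$-divisor. By definition $(R,D)$ is $\mathbb{Q}$-Gorenstein precisely when there is an integer $m>0$ such that $m(K_X+D)=\sum_i m(a_i-1)D_i$ is an (integral) Cartier divisor, hence --- since $X$ is affine toric --- a principal divisor.

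The next step is to translate ``principal'' into lattice language via the standard description of principal divisors on a toric variety (\cite{Ful93}): a torus-invariant Weil divisor $\sum_i b_i D_i$ is principal if and only if $\sum_i b_i D_i=\divf\chi^{\vec{u}}$ for some $\vec{u}\in M$, and since $\nu_i(\chi^{\vec u})=\vec{u}\cdot\vec{v}_i$ (as recalled above) we have $\divf\chi^{\vec u}=\sum_i(\vec{u}\cdot\vec{v}_i)D_i$. Thus $m(K_X+D)$ is principal if and only if there is $\vec{u}\in M$ with $\vec{u}\cdot\vec{v}_i=m(a_i-1)$ for every $i$; setting $\vec{w}=\tfrac1m\vec{u}\in M\otimes\mathbb{Q}$ gives $\vec{w}\cdot\vec{v}_i=a_i-1$, which is the asserted condition.

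Conversely, given $\vec{w}\in M\otimes\mathbb{Q}$ with $\vec{w}\cdot\vec{v}_i=a_i-1$ for all $i$, I would clear denominators: choose $m>0$ with $m\vec{w}\in M$. Then each $(m\vec{w})\cdot\vec{v}_i=m(a_i-1)$ is automatically an integer (as $m\vec{w}\in M$ and $\vec{v}_i\in N$), so $m(K_X+D)=\sum_i m(a_i-1)D_i$ is an integral divisor equal to $\divf\chi^{m\vec{w}}$, hence principal and in particular $\mathbb{Q}$-Cartier; therefore $(R,D)$ is $\mathbb{Q}$-Gorenstein.

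I do not anticipate a serious obstacle here, since each implication is a short chain of equivalences built from facts already recorded in the excerpt; the only point needing care is the bookkeeping with $\mathbb{Q}$-coefficients --- checking that ``$m(K_X+D)$ Cartier'' genuinely forces the numbers $m(a_i-1)$ to be integers, and that $\chi^{m\vec{w}}$ has exactly the divisor claimed --- which is handled by the observation that $(m\vec{w})\cdot\vec{v}_i$ lies in $\mathbb{Z}$ as soon as $m\vec{w}\in M$.
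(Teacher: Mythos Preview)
Your proof is correct and follows essentially the same approach as the paper's: both compute $K_X+D=\sum_i(a_i-1)D_i$, use that Cartier divisors on an affine toric variety are principal, and translate principality via $\divf\chi^{\vec u}=\sum_i(\vec u\cdot\vec v_i)D_i$ into the lattice condition. Your version is simply more explicit about the two directions and the denominator-clearing step, whereas the paper compresses this into a single ``extends linearly to $\mathbb{Q}$-divisors'' sentence.
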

\begin{proof}
Given $\vec{u}\in M$, $\divf x^{\vec{u}} = \sum_i (\vec{u}\cdot\vec{v}_i)D_i$. This operation extends linearly to $\mathbb{Q}$-divisors, so that for $\vec{u}\in M\otimes_{\mathbb{Z}}\mathbb{Q}$, $\divf x^{n\vec{u}} = n(\sum_i c_i D_i)$ if and only if $\vec{u}\cdot \vec{v}_i = c_i$ for each $i$. Thus, $K_X+D$ is $\mathbb{Q}$-Gorenstein if and only if for some $\vec{w}\in M\otimes\mathbb{Q}$, $\vec{w}\cdot\vec{v}_i = -1 + a_i$ for each $i$.
\end{proof}

\subsection{Pairs Computation}

Now we will compute the $F$-signature of pairs and triples. We begin with the pairs case, in which our proof requires little modification from that of Theorem \ref{AffineToricFSignature}.

\begin{defn}\label{PDDef}
Let $\sigma$ be a cone as in Remark \ref{ToricAssumptions}, with primitive generators $\vec{v}_1,\ldots, \vec{v}_r$. Let $D = \sum_i a_i D_i$ be a torus-invariant $\mathbb{Q}$-divisor on $\Spec R$, with $D_i$ the prime divisor corresponding to $\vec{v}_i$. We define $P_{\sigma}^D\subset \sigma^{\dual}$ to be the polytope $\{\vec{v}\in M_{\mathbb{R}}\suchthat \forall i, 0\leq \vec{v}\cdot \vec{v}_i<1-a_i\}$.
\end{defn}

\begin{thm}\label{FSignatureOfPairs}
Let $R$ be the coordinate ring of an affine toric variety, with conventions as in Remark \ref{ToricAssumptions}. Let $D$ be a torus-invariant $\mathbb{Q}$-divisor, with associated polytope $P_{\sigma}^D$ as in Definition \ref{PDDef}. Then $F_*^e I_e^D$ is generated by the monomials in the set $(\sigma\backslash P_{\sigma}^D)\cap \frac{1}{p^e}M$, and $s(R) = \Vol(P_{\sigma}^D)$.
\end{thm}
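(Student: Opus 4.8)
The plan is to run the same argument as for Theorem \ref{AffineToricFSignature}, but with $R^{1/p^e}$ replaced by the twisted module $R(\lceil(p^e-1)D\rceil)^{1/p^e}$, using Lemma \ref{PairsInterp} to identify the relevant free rank and Lemma \ref{ApproxTriples} to replace $(p^e-1)D$ by $p^eD$ (and then to pass to rational $a_i$ with $p$-power denominators by continuity). So fix $q = p^e$ and consider $W = R(\lceil qD\rceil)^{1/q}$. By Lemma \ref{RTwist}, $R(qD) = \sum_{\vec u} R\cdot x^{\vec u}$ over $\vec u \in S$ with $\vec u\cdot\vec v_i \geq -q a_i$; taking $q$th roots, $W = k[H^D]$ where $H^D = \{\vec v \in \tfrac1q M \suchthat \vec v\cdot\vec v_i \geq -a_i \text{ for all } i\}$, i.e. $H^D$ is the translate (rescaled) of the shifted dual cone $\sigma^{\vee}_D := \{\vec v \in M_{\mathbb R}\suchthat \vec v\cdot\vec v_i\geq -a_i\}$ intersected with $\tfrac1q M$. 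This is a finitely generated $S$-module over $M_{\mathbb R}$ sitting inside $k[\tfrac1q M]$, and since $\sigma$ is full-dimensional it contains no nontrivial units, so Lemma \ref{FreeRankFormula} applies verbatim (with $L = M$ by Lemma \ref{ConeLattice}).

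The key step is then the analogue of Lemma \ref{PCCharacterization}: I claim the set of monomials in $H^D$ generating a free summand of $W$, namely $\{\vec v\in H^D \suchthat \forall\,\vec k\in M\backslash S,\ \vec v + \vec k\notin H^D\}$, equals $P_{\sigma}^D\cap\tfrac1q M$. The proof copies the one given for Lemma \ref{PCCharacterization} almost word for word. If $\vec v$ satisfies $0\leq \vec v\cdot\vec v_i < 1-a_i$ for all $i$ and $\vec k\in M\backslash S$, then $\vec k\cdot\vec v_j \leq -1$ for some $j$ (since $\vec k\cdot\vec v_j<0$ is an integer), so $(\vec v+\vec k)\cdot\vec v_j < (1-a_j) - 1 = -a_j$, hence $\vec v+\vec k\notin H^D$. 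Conversely, if $\vec v\cdot\vec v_j \geq 1-a_j$ for some $j$ (with $\vec v\in H^D$, so $\vec v\cdot\vec v_j\geq -a_j$), the same construction as before — pick $\vec k_0\in M$ with $\vec k_0\cdot\vec v_j = -1$, using Lemma \ref{ConeLattice}, and push it deep into the interior of the facet $F_j$ by adding a large multiple of an interior lattice point of $F_j$ — produces $\vec k\in M$ with $\vec k\cdot\vec v_j = -1 < 0$ (so $\vec k\notin S$) and $\vec k\cdot\vec v_i \geq 0$ for $i\neq j$, whence $(\vec v+\vec k)\cdot\vec v_i \geq -a_i$ for all $i$, i.e. $\vec v+\vec k\in H^D$. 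So $\vec v$ is not a free-summand generator. This establishes that $F_*^e I_e^D$ is generated by the monomials in $(\sigma^{\vee}\backslash P_{\sigma}^D)\cap\tfrac1q M$ (equivalently, by Lemma \ref{PairsInterp}, the complement of a $k$-basis of $R^{1/q}/(I_e^D)^{1/q}$), and that $a_e^D = \#(P_{\sigma}^D\cap\tfrac1q M)$.

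Finally, by the definition of $F$-signature of a pair and Remark \ref{IeDef}-style bookkeeping, $s(R, D) = \lim_{q\to\infty} \tfrac{a_e^D}{q^n} = \lim_{q\to\infty}\tfrac{\#(P_{\sigma}^D\cap\frac1q M)}{q^n}$, which is $\Vol(P_{\sigma}^D)$ by Lemma \ref{PolytopeVolume} (the boundary of the polytope $P_{\sigma}^D$ has measure zero). To handle general real $a_i$ rather than $a_i$ with $p$-power denominators, invoke the continuity of $s(R,D)$ in the coefficients $a_i$ from Lemma \ref{ApproxTriples} together with the evident continuity of $\vec a\mapsto\Vol(P_{\sigma}^{\sum a_i D_i})$.

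\textbf{Main obstacle.} The only genuinely new content is the bookkeeping in translating Lemma \ref{RTwist} into the description $W = k[H^D]$ with $H^D$ a shifted cone, and checking that the "no nontrivial units" hypothesis of Lemma \ref{FreeRankFormula} still holds after the shift — it does, because full-dimensionality of $\sigma$ is unaffected by translating $\sigma^{\vee}$. Everything else is a line-by-line transcription of the proofs of Theorem \ref{AffineToricFSignature} and Lemma \ref{PCCharacterization} with the inequalities $0\leq\vec v\cdot\vec v_i<1$ replaced by $0\leq\vec v\cdot\vec v_i<1-a_i$.
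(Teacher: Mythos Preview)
Your approach is the paper's: reduce to $p^eD$ with $p$-power-denominator $a_i$ via Lemma \ref{ApproxTriples}, describe $R(p^eD)^{1/p^e}$ as $k[H^D]$ using Lemma \ref{RTwist}, invoke the free-summand criterion of Lemma \ref{FreeRankFormula}, prove the pairs analogue of Lemma \ref{PCCharacterization} (the paper isolates this as Lemma \ref{PCDCharacterization}), and finish with Lemma \ref{PolytopeVolume}.

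There is one slip in your intermediate claim. You assert that the full set of free-summand generators of $W$, namely $\{\vec v\in H^D \mid \forall\,\vec k\in M\setminus S,\ \vec v+\vec k\notin H^D\}$, equals $P_{\sigma}^D\cap\tfrac1q M$. That is false: the set is actually $\{\vec v\in\tfrac1q M \mid -a_i \le \vec v\cdot\vec v_i < 1-a_i \text{ for all } i\}$, which for effective $D\neq 0$ is strictly larger than $P_{\sigma}^D$. Your converse argument only rules out $\vec v\cdot\vec v_j\ge 1-a_j$; it says nothing about points with $-a_j\le\vec v\cdot\vec v_j<0$, and indeed such points \emph{do} generate free summands of $W$. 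The correct statement (and what the paper proves) restricts from the outset to $\vec v\in\sigma^{\dual}\cap\tfrac1q M$, i.e.\ to monomials of $R^{1/q}$, since by Lemma \ref{PairsInterp} the ideal $(I_e^D)^{1/q}$ lives in $R^{1/q}$, not in $W$. Intersecting the free-summand generators of $W$ with $\sigma^{\dual}$ then gives exactly $P_{\sigma}^D\cap\tfrac1q M$, and your next sentence about $F_*^eI_e^D$ being generated by monomials in $(\sigma^{\dual}\setminus P_{\sigma}^D)\cap\tfrac1q M$ shows you already have this restriction in mind. So the fix is purely cosmetic: replace ``$\vec v\in H^D$'' by ``$\vec v\in \sigma^{\dual}\cap\tfrac1q M$'' in the claim, and the argument goes through and matches the paper's.
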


\begin{proof}
First, we apply Lemma \ref{ApproxTriples} to replace $(p^e-1)D$ by $p^eD$ without changing the $F$-signature. By the same lemma, $s(R, D)$ is continuous as a function of the $a_i$, so we may assume that $a_i\in\frac{1}{p^e}\mathbb{Z}$. (Proving the claim on that dense subset will prove it for all divisors by continuity.) We also assume that $e$ is sufficiently large, so that $p^eD$ is an integral divisor, and $\lceil p^eD\rceil = p^eD$. As a result of all this simplification, we may ignore the rounding-up operation.

By Lemma \ref{RTwist}, $R(p^eD)$ is an $\mathbb{N}^n$-graded $R$-module, generated by $\{\chi^{\vec{v}}\suchthat\vec{v}\in M$, and $\vec{v}\cdot \vec{v}_i\geq -p^e a_i\}$. It follows that $R(p^eD)^{1/p^e}$ is $\mathbb{N}^n/q$-graded: it's generated by $\{\chi^{\vec{v}}\suchthat\vec{v}\in \frac{1}{p^e}M$, and $\vec{v}\cdot \vec{v}_i\geq -a_i\}$.

Thus, we may apply Lemma \ref{MonomialDecomposition}.  The graded module $R(p^eD)^{1/p^e}$ decomposes as a direct sum of graded submodules, where each submodule is generated by related monomials. Each submodule splits off from $R(p^eD)^{1/p^e}$ if and only if it is generated by a single monomial; likewise, each submodule generated by monomials in $R^{1/p^e}$ splits off from $R^{1/p^e}$ if and only if it is generated by a single monomial.

Set $\sigma' = \{\vec{v}\in M_{\mathbb{R}}\suchthat \forall i, \vec{v}\cdot \vec{v}_i\geq -a_i\}$, so that if $S' := M\cap \sigma'$, then $\chi^{S'}$ is the set of generators for $R(D)$. By Lemma \ref{FreeRankFormula}, $a_e = \#\{\vec{v}\in \frac{1}{p^e}S\suchthat \forall \vec{k}\in M\backslash S, \vec{v}+\vec{k}\notin \frac{1}{p^e}S'\}$. Note that $F_*^e I_e^D$ is generated by those monomials in $R^{1/p^e}$ whose corresponding characters are \emph{not} in this set.

 Following the proof of Theorem \ref{AffineToricFSignature}, we find that $a_e = \#\{\vec{v}\in \frac{1}{p^e}S\suchthat \forall \vec{k}\in M\backslash S, \vec{v}+\vec{k}\notin \frac{1}{p^e}S'\}$. Equivalently, $a_e = \#\{\vec{v}\in \sigma^{\dual}\cap \frac{1}{p^e}M\suchthat \forall \vec{k}\in M\backslash \sigma^{\dual}, \vec{v}+\vec{k}\notin \sigma'\}.$ That is, $$a_e = \#\{\frac{1}{p^e}M\cap P'\},$$ where $P'=\{\vec{v}\suchthat \vec{v}\cdot\vec{v}_i\geq 0$, and $\forall \vec{k}\in M\backslash\sigma^{\dual}, (\vec{v}+\vec{k})\cdot \vec{v}_i<-a_i$ for some $i\}$. (By the same argument, $F_*^e I_e$ is generated by the monomials whose characters lie in $\sigma^{\dual}\backslash P'$.) By Lemma \ref{PCDCharacterization} (the pairs analogue to Lemma \ref{PCCharacterization}), $P' = P_{\sigma}^D$. Our claim then follows from our lemma on volumes of polytopes, Lemma \ref{PolytopeVolume}, just as in our original proof of Theorem \ref{AffineToricFSignature}.
\end{proof}

\begin{lem}\label{PCDCharacterization}
Suppose that we are in the situation of Lemma \ref{FSignatureOfPairs}, and that $P' = \{\vec{v}\suchthat \vec{v}\cdot\vec{v}_i\geq 0$, and $\forall \vec{k}\in M\backslash \sigma^{\dual},(\vec{v}+\vec{k})\cdot \vec{v}_i<-a_i$ for some $i\}$. Then $P' = P_{\sigma}^D = \{\vec{v}\in M_{\mathbb{R}}\suchthat \forall i, 0\leq \vec{v}\cdot \vec{v}_i<1-a_i\}$.
\end{lem}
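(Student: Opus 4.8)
The plan is to mimic the proof of Lemma \ref{PCCharacterization}, adapting the argument to account for the shift by the coefficients $a_i$. The claim is a set equality $P' = P_{\sigma}^D$, so I would prove the two inclusions separately. Throughout I will use that $\sigma^{\dual} = \{\vec{u}\suchthat \vec{u}\cdot\vec{v}_i\geq 0\ \forall i\}$, and, crucially (as in the original lemma), that $\Lattice(S) = M$ by Lemma \ref{ConeLattice}, so that for any $j$ there exist lattice vectors $\vec{k}\in M$ realizing any prescribed integer value of $\vec{k}\cdot\vec{v}_j$.

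For the inclusion $P_{\sigma}^D\subset P'$: suppose $\vec{v}\in P_{\sigma}^D$, so $0\leq \vec{v}\cdot\vec{v}_i < 1-a_i$ for all $i$; in particular $\vec{v}\cdot\vec{v}_i\geq 0$, which is the first defining condition of $P'$. Now fix any $\vec{k}\in M\backslash\sigma^{\dual}$. Then $\vec{k}\cdot\vec{v}_j < 0$ for some $j$; since (after the reductions in the proof of Theorem \ref{FSignatureOfPairs}) $a_j\in\frac{1}{p^e}\mathbb{Z}$ and $\vec{k}\cdot\vec{v}_j\in\mathbb{Z}$, I need $\vec{k}\cdot\vec{v}_j \leq -1$. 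Then $(\vec{v}+\vec{k})\cdot\vec{v}_j < (1-a_j) + (-1) = -a_j$, so the second condition of $P'$ is satisfied with this $i=j$. Hence $\vec{v}\in P'$. (I should remark that the comparison "$\vec{k}\cdot\vec{v}_j\leq -1$" uses integrality of $\vec{k}\cdot\vec{v}_j$ but the resulting strict inequality $(\vec v+\vec k)\cdot\vec v_j<-a_j$ is all that's needed; this is why the $a_j$ being a $p$-power-denominator rational is harmless.)

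For the reverse inclusion, I argue by contrapositive: suppose $\vec{v}\notin P_{\sigma}^D$ and show $\vec{v}\notin P'$. If $\vec{v}\cdot\vec{v}_i < 0$ for some $i$, then $\vec{v}\notin P'$ immediately by the first defining condition. Otherwise $\vec{v}\cdot\vec{v}_i\geq 0$ for all $i$ but $\vec{v}\cdot\vec{v}_j \geq 1-a_j$ for some $j$. I then need to produce a single $\vec{k}\in M\backslash\sigma^{\dual}$ such that $(\vec{v}+\vec{k})\cdot\vec{v}_i\geq -a_i$ for every $i$ — this witnesses the failure of the "$\forall\vec k\in M\setminus\sigma^\dual,\ \exists i$" condition, hence $\vec v\notin P'$. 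Following Lemma \ref{PCCharacterization}, I pick $\vec{k}_0\in M$ with $\vec{k}_0\cdot\vec{v}_j = -1$ and $\vec{k}_1\in M$ in the interior of the facet $F_j = \vec{v}_j^{\perp}\cap\sigma^{\dual}$, so $\vec{k}_1\cdot\vec{v}_j = 0$ and $\vec{k}_1\cdot\vec{v}_i > 0$ for $i\neq j$; setting $\vec{k} = \vec{k}_0 + m\vec{k}_1$ for $m$ large makes $(\vec k_0+m\vec k_1)\cdot\vec v_i\geq 0 > -a_i$ for $i\neq j$ (here I use $a_i\geq 0$ since $D$ is effective) and $\vec{k}\cdot\vec{v}_j = -1 < 0$, so $\vec{k}\notin\sigma^{\dual}$. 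Finally $(\vec{v}+\vec{k})\cdot\vec{v}_j = \vec{v}\cdot\vec{v}_j - 1 \geq (1-a_j) - 1 = -a_j$, and for $i\neq j$, $(\vec v+\vec k)\cdot\vec v_i \geq \vec v\cdot\vec v_i \geq 0 \geq -a_i$. So this $\vec k$ works, and $\vec v\notin P'$. Combining the two inclusions gives $P' = P_{\sigma}^D$.

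The main obstacle — though it is really only a bookkeeping subtlety rather than a genuine difficulty — is keeping the bound "$\vec k\cdot\vec v_j \le -1$ forces $(\vec v+\vec k)\cdot\vec v_j<-a_j$" correct given that $a_j$ is no longer an integer but a rational with $p$-power denominator: one must be careful that the strict inequality $\vec v\cdot\vec v_j<1-a_j$ in $P_\sigma^D$ plus the integral drop of at least $1$ lands strictly below $-a_j$, which works precisely because the inequality defining $P_\sigma^D$ is strict. A secondary point to state explicitly is the use of effectiveness of $D$ (i.e. $a_i\ge 0$) in the reverse inclusion, so that $(\vec v+\vec k)\cdot\vec v_i\ge 0\ge -a_i$ for the indices $i\ne j$; if one wanted the statement for non-effective $\mathbb{Q}$-divisors one would need a slightly more careful choice of $\vec k_0$. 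Everything else transcribes directly from Lemma \ref{PCCharacterization}.
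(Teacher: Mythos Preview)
Your proof is correct and follows essentially the same route as the paper's own argument: both prove the two inclusions by the integrality trick $\vec{k}\cdot\vec{v}_j\leq -1$ in one direction and by constructing $\vec{k}=\vec{k}_0+m\vec{k}_1$ exactly as in Lemma~\ref{PCCharacterization} in the other. Your version is simply more explicit, in particular in spelling out the role of effectiveness ($a_i\geq 0$) in the reverse inclusion, which the paper uses silently; one cosmetic slip is the strict inequality ``$0>-a_i$'' where you mean ``$0\geq -a_i$'', but you correct this in the line that actually matters.
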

\begin{proof}
We follow the proof of Lemma \ref{PCCharacterization}. Suppose that $\vec{v}\in P_{\sigma}^D$, $\vec{k}\in M$, and $\vec{k}\cdot \vec{v}_i<0$. Then $(\vec{v}+\vec{k})\cdot \vec{v}_i< (1-a_i)+(-1)= -a_i$, so $\vec{v}+\vec{k}\notin \sigma'$. It follows that $P_{\sigma}^D\subset P'$. On the other hand, suppose $\vec{v}\notin P_{\sigma}^D$. Either $\vec{v}\cdot \vec{v}_i<0$ for some $i$, in which case $\vec{v}\notin P'$, or $\vec{v}\cdot \vec{v}_i\geq 1-a_i$ for some $i$. In the latter case, we may, as in Lemma \ref{PCCharacterization}, choose $\vec{k}\in M$ such that $\vec{k}\cdot \vec{v}_i=-1$ and $\vec{k}\cdot\vec{v}_j\geq 0$ for all $j\neq i$. Then $\vec{k}\notin \sigma^{\dual}$, but $(\vec{v}+\vec{k})\cdot\vec{v}_j\geq -a_j$ for all $j$. It follows that $\vec{v}\notin P'$.

We conclude that $P' = P_{\sigma}^D$.
\end{proof}

\subsection{Triples Computation}

\begin{defn}\label{PaDef}
Let $\sigma$ be a cone as in Remark \ref{ToricAssumptions} and $D$ a torus-invariant divisor, with corresponding polytope $P_{\sigma}^D$ as in Definition \ref{PDDef}. Let $\mathfrak{a}\subset R$ be a monomial ideal, and $0\leq t\in \mathbb{R}$. Let $\Newt(\mathfrak{a})$ denote the Newton polyhedron of $\mathfrak{a}$. We define $\PDa$ to be the polytope $(P_{\sigma}^D-t\cdot \Newt(\mathfrak{a}))\cap \sigma^{\dual}$.
\end{defn}

\begin{thm}\label{FSignatureOfTriples}
Let $R$ be the coordinate ring of an affine toric variety, with conventions as in Remark \ref{ToricAssumptions}. Let $D$ be a torus-invariant divisor as in Definition \ref{PDDef}. Let $\mathfrak{a}\subset R$ be a monomial ideal, with associated polytope $\PDa$ as in Definition \ref{PaDef}. Then $s(R, D, \mathfrak{a}^t) = \Vol(\PDa)$.
\end{thm}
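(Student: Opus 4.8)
The plan is to mimic the proofs of Theorems \ref{AffineToricFSignature} and \ref{FSignatureOfPairs}, but now account for the ideal $\mathfrak{a}$ via the colon-ideal description of $I_e^{\mathfrak{a}}$ given in Lemma \ref{TriplesInterp}. First I would invoke Lemma \ref{ApproxTriples} to make the usual reductions: replace $(p^e-1)D$ by $p^eD$ and $\lceil(p^e-1)t\rceil$ by $\lceil p^e t\rceil$, assume the coefficients $a_i$ of $D$ and the exponent $t$ are rational with denominator a power of $p$, take $e$ large enough that $p^eD$ is integral, and replace $\mathfrak{a}^{\lceil p^e t\rceil}$ by its integral closure $\overline{\mathfrak{a}^{p^e t}}$. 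By the Newton-polyhedron description of integral closures, $\overline{\mathfrak{a}^{p^e t}}$ is generated by the monomials $\chi^{\vec u}$ with $\vec u \in (p^e t\cdot\Newt(\mathfrak{a}))\cap M$; equivalently, in the $\tfrac1{p^e}M$-grading, its $p^e$th roots are the $\chi^{\vec u}$ with $\vec u\in (t\cdot\Newt(\mathfrak{a}))\cap\tfrac1{p^e}M$.

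Next I would compute $a_e^{\mathfrak{a}}$. By Lemma \ref{TriplesInterp}, $F_*^e I_e^{\mathfrak{a}} = (F_*^e I_e^D :_{R^{1/p^e}} F_*^e\mathfrak{a}^{\lceil t(p^e-1)\rceil})$, so a monomial $\chi^{\vec v}$ (with $\vec v\in\sigma^{\dual}\cap\tfrac1{p^e}M$) fails to generate a free summand — i.e. lies in $F_*^e I_e^{\mathfrak{a}}$ — exactly when $\chi^{\vec v}\cdot\chi^{\vec u}\in F_*^e I_e^D$ for every generating monomial $\chi^{\vec u}$ of $\overline{\mathfrak{a}^{p^e t}}{}^{1/p^e}$, i.e. for every $\vec u\in (t\cdot\Newt(\mathfrak{a}))\cap\tfrac1{p^e}M$. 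Dually, $\chi^{\vec v}$ \emph{does} generate a free summand iff $\vec v+\vec u\notin\sigma^{\dual}\backslash P^D_\sigma$ for at least one such $\vec u$; equivalently $\vec v+\vec u\in P^D_\sigma$ for some $\vec u\in(t\cdot\Newt(\mathfrak{a}))\cap\tfrac1{p^e}M$ (using that $\vec v+\vec u\in\sigma^\dual$ automatically, since $P^D_\sigma$ and $t\cdot\Newt(\mathfrak{a})$ both lie in $\sigma^\dual$ and $\Newt(\mathfrak a)$ is closed under adding elements of $\sigma^\dual$). So $a_e^{\mathfrak{a}}$ counts the lattice points $\vec v\in\tfrac1{p^e}M$ with $\vec v\in P^D_\sigma - \vec u$ for some lattice point $\vec u\in t\cdot\Newt(\mathfrak{a})$, which are exactly the lattice points of $(P^D_\sigma - t\cdot\Newt(\mathfrak{a}))\cap\sigma^\dual = \PDa$. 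The one technical point to handle carefully here is that the Minkowski difference should be realized by \emph{lattice} points of $t\cdot\Newt(\mathfrak{a})$, not arbitrary real points; this is where I would need a separate lemma (the triples analogue of Lemma \ref{PCCharacterization}, presumably proved just after this theorem) asserting that
\[
\bigl\{\vec v\in\sigma^\dual : \exists\,\vec u\in (t\cdot\Newt(\mathfrak a))\cap\tfrac1{p^e}M,\ \vec v+\vec u\in P^D_\sigma\bigr\}
\]
has the same $\tfrac1{p^e}M$-point count, up to lower-order error, as $\PDa\cap\tfrac1{p^e}M$ — this follows because $\Newt(\mathfrak a)$ is a polyhedron whose recession cone is $\sigma^\dual$, so its lattice points become equidistributed at scale $\tfrac1{p^e}$.

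Finally I would apply Lemma \ref{PolytopeVolume}: $\PDa$ is a bounded intersection of finitely many half-spaces (a polytope, being the intersection of the polytope-minus-polyhedron $P^D_\sigma - t\cdot\Newt(\mathfrak a)$ with $\sigma^\dual$ — bounded because $P^D_\sigma$ is bounded and we intersect with $\sigma^\dual$), hence $\lim_{e\to\infty}\tfrac{a_e^{\mathfrak a}}{p^{en}} = \Vol(\PDa)$, which by definition is $s(R,D,\mathfrak a^t)$. I expect the main obstacle to be precisely the bookkeeping in the second paragraph: correctly translating the colon-ideal condition through the $\tfrac1{p^e}$-graded structure into a clean Minkowski-difference statement about polytopes, and in particular verifying that one may work with lattice points of $t\cdot\Newt(\mathfrak a)$ rather than real points without changing the leading-order count — everything else is a routine adaptation of the arguments already given for Theorems \ref{AffineToricFSignature} and \ref{FSignatureOfPairs}.
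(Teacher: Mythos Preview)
Your proposal is correct and follows essentially the same route as the paper: reduce via Lemma \ref{ApproxTriples}, use the colon-ideal description from Lemma \ref{TriplesInterp} together with the monomial description of $F_*^eI_e^D$ from Theorem \ref{FSignatureOfPairs}, arrive at a lattice-point count over a ``discrete Minkowski difference,'' and then invoke a separate technical lemma to pass from lattice-point Minkowski difference to the real Minkowski difference $\PDa$ before applying Lemma \ref{PolytopeVolume}. The paper calls that technical lemma Lemma \ref{TriplesErrorCorrect} (not an analogue of Lemma \ref{PCCharacterization}---it is a genuinely different kind of statement, proved via several geometric sub-lemmas about approximating polytope points by nearby lattice points), but you have correctly identified both its content and its role.
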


\begin{proof}[Proof of Theorem \ref{FSignatureOfTriples}]
As in our proof of Lemma \ref{FSignatureOfPairs}, we apply Lemma \ref{ApproxTriples} to replace $(p^e-1)D$ by $p^eD$, and to assume that $a_i\in\frac{1}{p^e}\mathbb{Z}$. Likewise, we assume that $t\in \frac{1}{p^e}\mathbb{Z}$, and we replace $(p^e-1)t$ with $p^e t$, so that for sufficiently large $e$, $\lceil p^e t\rceil = p^e t$, and $\lceil p^e D\rceil = p^e D$. We also replace $\mathfrak{a}^{p^e t}$ with its integral closure $\overline{\mathfrak{a}^{p^e t}}$, which is generated by monomials in the set $p^e t\cdot\Newt(\mathfrak{a})$.

We will use the characterization of $F$-signature of triples given in Lemma \ref{TriplesInterp}. Thus, we study $(I_e^{\mathfrak{a}})^{1/p^e} = ((I_e^D)^{1/p^e}:(\overline{\mathfrak{a}^{p^e t}})^{1/p^e})$. By Lemma \ref{FSignatureOfPairs}, $(I_e^D)^{1/p^e}$ is generated by the monomials whose characters lie in $(\sigma^{\dual}\backslash P_{\sigma}^D)\cap \frac{1}{p^e}M$. The set of characters $\vec{v}$ with $\chi^{\vec{v}}\in (\overline{\mathfrak{a}^{tp^e}})^{1/p^e}$ is $(t\cdot \Newt(\mathfrak{a}))\cap \frac{1}{p^e}M$. Thus, the monomials in $R^{1/p^e}\backslash (I_e^{\mathfrak{a}})^{1/p^e}$ are those $\chi^{\vec{v}},\vec{v}\in\frac{1}{p^e}M\cap \sigma^{\dual}$, such that for some $\vec{w}\in \frac{1}{p^e}M\cap t\cdot \Newt(\mathfrak{a}), \vec{v}+\vec{w}\in P_{\sigma}^D$. This set of characters can be written as a Minkowski sum, so that the size $a_e^{\mathfrak{a}}$ of the set is: $$a_e^{\mathfrak{a}} = \#(((P_{\sigma}^D\cap \frac{1}{p^e}M)-((t\cdot \Newt(\mathfrak{a}))\cap \frac{1}{p^e}M))\cap \sigma^{\dual}).$$

We obtain a slightly larger (but easier-to-count) set if we intersect with the lattice $\frac{1}{p^e}M$ only \emph{after} taking the Minkowski sum. In particular, set $a'_e := \#( (P_{\sigma}^D - t\cdot\Newt(\mathfrak{a}))\cap \sigma^{\dual} \cap \frac{1}{p^e}M )$. Note that $a'_e = \#(\PDa\cap \frac{1}{p^e}M)$. Now, $a'_e$ may be larger than $a^{\mathfrak{a}}_e$. However, by Lemma \ref{TriplesErrorCorrect}, $\lim_{e\to\infty} \frac{a_e^{\mathfrak{a}}}{p^{ed}} = \lim_{e\to\infty} \frac{a'_e}{p^{ed}}$. 

Thus, $s(R, D, \mathfrak{a}^t) = \lim_{e\to\infty} \frac{a'_e}{p^{ed}}$. We can apply Lemma \ref{PolytopeVolume} (with $M = \mathbb{Z}^n$, $P = \PDa$, and $a'_e = \#(P\cap \frac{1}{p^e}M)$) to conclude that the $F$-signature of triples is the volume of the polytope $\PDa$.
\end{proof}






\subsection{A Technical Lemma}

All that remains is to prove Lemma \ref{TriplesErrorCorrect}, which states that in the proof of Theorem \ref{FSignatureOfTriples}, the quantities $a^{\mathfrak{a}}_e$ and $a'_e$ are ``close enough" that either one may be used to compute $F$-signature. (These two quantities are obtained similarly: to compute $a^{\mathfrak{a}}_e$, we start with the polytopes $P_{\sigma}^D$ and $-t\cdot\Newt(\mathfrak{a})$; intersect each with the lattice $\frac{1}{p^e}M$; then take the Minkowski sum of these two sets. To compute $a'_e$, we take the Minkowski sum of the two polytopes, then intersect with the lattice $\frac{1}{p^e}M$.)

\begin{lem}\label{TriplesErrorCorrect}
Suppose we are in the situation of Theorem \ref{FSignatureOfTriples}. Assume that all coefficients (the $a_i$ and $t$) lie in $\frac{1}{p^{e_0}}M$ for some $e_0$. Let $a_e^{\mathfrak{a}} = \#((\frac{1}{p^e}M\cap P_{\sigma}^D-\frac{1}{p^e}M\cap (t\cdot \Newt(\mathfrak{a}))\cap P_{\sigma}^D))$ and $a'_e = \#((P_{\sigma}^D-((t\cdot \Newt(\mathfrak{a}))\cap P_{\sigma}^D))\cap\frac{1}{p^e}M\cap P_{\sigma}^D)$. Then $\lim_{e\to\infty}\frac{a'_e-a_e^{\mathfrak{a}}}{p^{ed}} = 0$.
\end{lem}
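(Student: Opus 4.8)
The plan is to read both $a'_e$ and $a_e^{\mathfrak a}$ as lattice-point counts, exactly as in the proof of Theorem~\ref{FSignatureOfTriples}, and then to show the two counts differ only by lattice points lying in a thin shell about the boundary of $\PDa$. Write $P := P_\sigma^D$, $Q := t\cdot\Newt(\mathfrak a)$, $q := p^e$, and $L := \frac{1}{q}M$. Since $\mathfrak a\subseteq R = k[\sigma^\dual\cap M]$ is an ideal, $Q$ is a polyhedron with recession cone $\sigma^\dual$, so $Q = N_1 + \sigma^\dual$ for some polytope $N_1$; hence $P - Q = (P - N_1) - \sigma^\dual$ is a polyhedron with finitely many facet hyperplanes, and $\PDa = (P-Q)\cap\sigma^\dual$ is a polytope (its boundedness uses that $\sigma^\dual$ is strongly convex, Lemma~\ref{FullDimensionalToStronglyConvex}). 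If $\PDa$ is not full-dimensional then $a'_e = \#(\PDa\cap L) = O(q^{n-1})$ and, since $0\le a_e^{\mathfrak a}\le a'_e$, the claimed limit is $0$; so assume $\PDa$ is full-dimensional. With $a'_e = \#(\PDa\cap L)$ and $a_e^{\mathfrak a} = \#\{\vec v\in\sigma^\dual : \vec v = \vec p-\vec q,\ \vec p\in P\cap L,\ \vec q\in Q\cap L\}$, every $\vec v$ counted by $a_e^{\mathfrak a}$ is counted by $a'_e$, and for $\vec v\in\PDa\cap L$ the condition ``$\vec v = \vec p-\vec q$ with $\vec p\in P\cap L$, $\vec q\in Q\cap L$'' is equivalent to $T_{\vec v}\cap L\ne\emptyset$, where $T_{\vec v} := (\vec v+Q)\cap P$; moreover $T_{\vec v}$ is a nonempty polytope whenever $\vec v\in\PDa$. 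Hence $a'_e - a_e^{\mathfrak a} = \#\{\vec v\in\PDa\cap L : T_{\vec v}\cap L = \emptyset\}$.

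The geometric core is that $T_{\vec v}$ is ``fat'' as soon as $\vec v$ is bounded away from $\partial(P-Q)$. First, (i) if $\vec v\in\operatorname{int}(P-Q)$ then $\operatorname{int}(\vec v+Q)\cap\operatorname{int}(P)\ne\emptyset$, and in particular $\operatorname{inradius}(T_{\vec v})>0$: otherwise $\vec v+Q$ and $P$ would admit a separating hyperplane, and nudging $\vec v$ away from $P$ along its normal would give $(\vec v'+Q)\cap P = \emptyset$ for $\vec v'$ arbitrarily close to $\vec v$, contradicting $\vec v\in\operatorname{int}(P-Q)$. Second, (ii) $\vec v\mapsto\operatorname{inradius}(T_{\vec v})$ is continuous on $\{\vec v : T_{\vec v}\ne\emptyset\}$, since it equals $\sup_{\vec y\in P}\min(\operatorname{depth}_Q(\vec y-\vec v),\operatorname{depth}_P(\vec y))$, where $\operatorname{depth}_C(\vec x)$ is the ($1$-Lipschitz) distance from $\vec x$ to the complement of a closed convex set $C$. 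Now fix $\eta>0$ and set $K_\eta := \{\vec v\in\PDa : d(\vec v,\partial(P-Q))\ge\eta\}$, a compact set which by (i) lies in $\operatorname{int}(P-Q)$; by (ii) and compactness there is $m_\eta>0$ with $\operatorname{inradius}(T_{\vec v})\ge m_\eta$ for all $\vec v\in K_\eta$. Let $c_0$ be the covering radius of the lattice $M$, so every ball of radius $c_0/q$ meets $L$. Once $c_0/q < m_\eta$, every $\vec v\in K_\eta\cap L$ has $T_{\vec v}$ containing a ball of radius exceeding $c_0/q$, hence a point of $L$; so for all large $e$ the ``bad'' lattice points counted by $a'_e - a_e^{\mathfrak a}$ lie in $\PDa\setminus K_\eta\subseteq\{\vec x : d(\vec x,\partial(P-Q))<\eta\}$.

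It remains to bound this count. Fix a ball $B\supseteq\PDa$. The set $\{\vec x : d(\vec x,\partial(P-Q))<\eta\}\cap B$ is contained in the union of the finitely many slabs of width $2\eta$ around the facet hyperplanes of $P-Q$, each capped by $B$; each cap is a bounded convex set of volume $O(\eta)$ with boundary of measure zero, so Lemma~\ref{PolytopeVolume} gives $\limsup_{e\to\infty}\frac{a'_e-a_e^{\mathfrak a}}{q^n}\le C\eta$ for a constant $C$ independent of $\eta$. Letting $\eta\to 0$, together with $a'_e\ge a_e^{\mathfrak a}$, yields $\lim_{e\to\infty}\frac{a'_e-a_e^{\mathfrak a}}{q^n} = 0$.

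I expect the only genuine work to be in (i)--(ii): showing that $(\vec v+t\cdot\Newt(\mathfrak a))\cap P_\sigma^D$ has inradius bounded below on compact subsets of $\operatorname{int}(P_\sigma^D - t\cdot\Newt(\mathfrak a))$. Everything else is bookkeeping with the covering radius of $M$ and Lemma~\ref{PolytopeVolume}; the only other points needing a line of care are the claims that $Q = t\cdot\Newt(\mathfrak a)$ has recession cone $\sigma^\dual$ (so $P-Q$ is genuinely polyhedral with finitely many facet hyperplanes) and that $\PDa$ is bounded.
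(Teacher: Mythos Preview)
Your argument is correct and follows a genuinely different route from the paper's. Both proofs identify the ``bad'' lattice points (those counted by $a'_e$ but not $a_e^{\mathfrak a}$) as lying in a thin shell about the boundary of $\PDa$, but they establish this in different ways.

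The paper's approach (Lemmas~\ref{PolytopeLatticePoints}--\ref{SetDifferences}) is explicit and quantitative: it uses the hypothesis that the relevant polytopes have vertices in $\frac{1}{p^{e_0}}M$ to show, by rounding barycentric coefficients, that every point of such a polytope lies within $K/p^e$ of a $\frac{1}{p^e}M$-lattice point \emph{inside} the polytope. This is then upgraded (via Lemmas~\ref{HalfSpaceComplement}--\ref{BallInHalfSpace}) to the statement that $B'_e\setminus B_e$ sits in a shell of width $\kappa/p^e$ that shrinks with $e$. Your approach is soft: you show via separating hyperplanes that the fibre $T_{\vec v}=(\vec v+Q)\cap P$ has positive inradius whenever $\vec v\in\operatorname{int}(P-Q)$, note that this inradius is $1$-Lipschitz in $\vec v$, and invoke compactness to obtain a uniform lower bound $m_\eta$ on the compact set $K_\eta$ bounded $\eta$ away from the boundary. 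The covering-radius step then confines the bad points to an $\eta$-shell for all large $e$, after which $\eta\to 0$ finishes.

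The trade-off: the paper's route yields an effective rate (shell width $\kappa/p^e$, hence $a'_e-a_e^{\mathfrak a}=O(p^{e(d-1)})$) and stays entirely within elementary combinatorics, but genuinely needs the rationality hypothesis on the $a_i$ and $t$ to make Lemma~\ref{PolytopeLatticePoints} work. Your route is more conceptual, does not use that hypothesis at all (so you in fact prove something slightly stronger), and replaces the entire chain of Lemmas~\ref{PolytopeLatticePoints}--\ref{SetDifferences} with a single inradius-plus-compactness argument; the price is that you obtain no explicit rate. Your handling of the unboundedness of $Q=t\cdot\Newt(\mathfrak a)$---writing $Q=N_1+\sigma^\dual$ to see that $P-Q$ is polyhedral with finitely many facets, and observing that $T_{\vec v}\subseteq P$ is automatically bounded---is also a little cleaner than the paper's treatment, which states its supporting lemmas only for polytopes.
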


To prove Lemma \ref{TriplesErrorCorrect}, we wish to show that taking the Minkowski sum of two polytopes, then intersecting with a lattice, is ``roughly the same" as performing those operations in reverse order. That is the content of Lemma \ref{SetDifferences}, the proof of which will proceed in several short steps. (Lemma \ref{PolytopeLatticePoints} will be used to prove Lemma \ref{CloseSets}, and Lemmas \ref{CloseSets}, \ref{HalfSpaceComplement} and \ref{BallInHalfSpace} will be used to prove Lemma \ref{SetDifferences}. Lemmas \ref{CloseSets} and \ref{SetDifferences} will then be used to prove Lemma \ref{TriplesErrorCorrect}.)

\begin{rem}[Notation]
In what follows, let $M$ be a lattice, and $M_{\mathbb{R}} = M\otimes\mathbb{R}\simeq \mathbb{R}^n$. Fix $e_0>0$, and set $M' = \frac{1}{p^{e_0}}M$. We denote by $d(\vec{v}, U)$ the distance from a point $\vec{v}$ to a set $U$, given by $d(\vec{v}, U)=\inf_{\vec{u}\in U}d(\vec{v}, \vec{u})$. We denote by $\mathcal{B}(\vec{v}, r)$ the ball of radius $r$ around $\vec{v}$, given by $\{\vec{u}\suchthat d(\vec{v}, \vec{u})<r\}$. We denote by $\partial U$ the boundary of a set $U$. Set $\lfloor x\rfloor_e = \frac{\lfloor p^e x\rfloor}{p^e}$ (rounding down to the nearest multiple of $\frac{1}{p^e}$) and $[x]_e = x-\lfloor x\rfloor_e$ (the $\frac{1}{p^e}$-fractional part of $x$).
\end{rem}

\begin{lem}
\label{PolytopeLatticePoints}Fix any polytope $Q\subset M_{\mathbb{R}}$ with extremal points in $M'$. There is a constant $K$ such that for each $e$, and for $\vec{u}\in Q$, $d(\vec{u}, Q\cap \frac{1}{p^e}M)<\frac{K}{p^e}$.
\end{lem}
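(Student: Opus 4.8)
The plan is to prove Lemma~\ref{PolytopeLatticePoints} by reducing from an arbitrary polytope to the single model simplex and then transporting the estimate back. First I would observe that it suffices to treat the case where $Q$ is a simplex: any polytope $Q$ with extremal points in $M'$ can be triangulated into finitely many simplices whose vertices are among the extremal points of $Q$ (hence lie in $M'$), and if each simplex admits such a constant $K_j$, then $K = \max_j K_j$ works for $Q$, since every $\vec{u}\in Q$ lies in one of the simplices. So fix a simplex $\Delta$ with vertices $\vec{w}_0,\ldots,\vec{w}_n\in M'$.

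Next I would set up barycentric coordinates: every $\vec{u}\in\Delta$ is $\vec{u} = \sum_{i=0}^n \lambda_i\vec{w}_i$ with $\lambda_i\ge 0$ and $\sum_i\lambda_i = 1$. The idea is to round the $\lambda_i$ to nearby rationals with denominator $p^e$ while preserving the constraints. Concretely, set $\mu_i = \lfloor\lambda_i\rfloor_e = \frac{\lfloor p^e\lambda_i\rfloor}{p^e}$ for $i = 1,\ldots,n$ and $\mu_0 = 1 - \sum_{i=1}^n\mu_i$. Then each $\mu_i\ge 0$ for $i\ge 1$, and $\mu_0 \ge \lambda_0 \ge 0$ because $\sum_{i=1}^n\mu_i \le \sum_{i=1}^n\lambda_i = 1-\lambda_0$; moreover $\sum_i\mu_i = 1$, so $\vec{u}' := \sum_i\mu_i\vec{w}_i\in\Delta$. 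Since the $\vec{w}_i$ have coordinates in $\frac{1}{p^{e_0}}M$ and (for $e\ge e_0$) the $\mu_i$ lie in $\frac{1}{p^e}\mathbb{Z}$, we get $\vec{u}'\in\Delta\cap\frac{1}{p^e}M$ (absorbing the fixed denominator $p^{e_0}$; for the finitely many $e < e_0$ one can use any fixed lattice point of $\Delta$ and enlarge $K$). Finally $\vec{u} - \vec{u}' = \sum_{i=1}^n(\lambda_i-\mu_i)\vec{w}_i + (\lambda_0-\mu_0)\vec{w}_0$, and since $0\le\lambda_i-\mu_i = [\lambda_i]_e < \frac{1}{p^e}$ for $i\ge 1$ and $|\lambda_0-\mu_0| = |\sum_{i=1}^n(\mu_i-\lambda_i)| < \frac{n}{p^e}$, the triangle inequality gives $d(\vec{u},\vec{u}') \le \frac{1}{p^e}\big(\sum_{i=1}^n\|\vec{w}_i\| + n\|\vec{w}_0\|\big) =: \frac{K_\Delta}{p^e}$, a bound independent of $e$ and $\vec{u}$.

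The only mild subtlety — and the step I would flag as requiring a little care — is the interaction between the fixed denominator $p^{e_0}$ of the vertices and the varying denominator $p^e$: one must confirm that $\sum_i\mu_i\vec{w}_i$ genuinely lands in $\frac{1}{p^e}M$ rather than merely $\frac{1}{p^{\max(e,e_0)}}M$, which is why the statement is most naturally proved for $e\ge e_0$ and then patched for small $e$ by enlarging $K$. Everything else is routine: the triangulation of a polytope with specified vertices into simplices with those vertices is standard, barycentric coordinates are elementary, and the norm bound $K_\Delta$ depends only on the finitely many vertices. Once this is in place the lemma follows, and it feeds into Lemma~\ref{CloseSets} as indicated in the text.
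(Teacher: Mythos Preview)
Your overall strategy---round the barycentric coefficients and dump the residual mass onto a single vertex---is the same idea the paper uses, and the triangulation detour, while unnecessary, is harmless. However, the step you yourself flag as delicate is in fact wrong as written. With $\mu_i\in\frac{1}{p^e}\mathbb{Z}$ and $\vec{w}_i\in\frac{1}{p^{e_0}}M$, the product $\mu_i\vec{w}_i$ lies only in $\frac{1}{p^{e+e_0}}M$, not in $\frac{1}{p^e}M$: multiplying $p^{e+e_0}\mu_i\vec{w}_i=(p^e\mu_i)(p^{e_0}\vec{w}_i)\in\mathbb{Z}\cdot M$ is the best you can say. Your diagnosis that it lands in $\frac{1}{p^{\max(e,e_0)}}M$ is mistaken (for $e\ge e_0$ that would already equal $\frac{1}{p^e}M$ and there would be nothing to check), and your proposed patch of ``enlarge $K$ for small $e$'' does not address the problem, which persists for all $e$.

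The fix is simple and is exactly what the paper does: round the coefficients to denominator $p^{e-e_0}$ rather than $p^e$. Setting $\mu_i=\lfloor\lambda_i\rfloor_{e-e_0}\in\frac{1}{p^{e-e_0}}\mathbb{Z}$ gives $\mu_i\vec{w}_i\in\frac{1}{p^{e-e_0}}\cdot\frac{1}{p^{e_0}}M=\frac{1}{p^e}M$ as desired, and the fractional parts satisfy $[\lambda_i]_{e-e_0}<\frac{1}{p^{e-e_0}}=\frac{p^{e_0}}{p^e}$, so your distance bound becomes $\frac{K_\Delta\, p^{e_0}}{p^e}$, still of the required form $\frac{K}{p^e}$. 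The paper also avoids the triangulation altogether by working directly with convex combinations of all extremal points of $Q$ and placing the leftover mass on the vertex of greatest norm; this saves a step but is otherwise the same argument.
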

\begin{proof}
Let $\vec{u}_1,\ldots, \vec{u}_k$ denote the extremal points of $Q$; all are contained in $\frac{1}{p^{e_0}}M$. Fix $\vec{v}\in Q$, say $\vec{v} = \sum_i a_i\vec{u_i}$ with $0\leq a_i; \sum_i a_i = 1$. Note that $\sum_i [a_i]_e = 1-\sum_i\lfloor a_i\rfloor_e \in\frac{1}{p^e}\mathbb{Z}$. Furthermore, $\sum_i [a_i]_e< \frac{k}{p^e}$ (each of the $k$ terms in the sum is less than $\frac{1}{p^e}$). Suppose without loss of generality that $\vec{u}_1$ has the greatest length of any of the extremal points.

Set $\vec{u}^* = \sum_i \lfloor a_i\rfloor_{e-e_0} \vec{u}_i + (\sum_i [a_i]_{e-e_0})\vec{u}_1$. Then $\vec{u}^*\in Q\cap \frac{1}{p^e}M$. Moreover, $d(\vec{u},\vec{u}^*)\leq \sum_i [a_i]_{e-e_0}|\vec{u}_i| + (\sum_i [a_i]_{e-e_0})|\vec{u}_1|\leq \frac{2k}{p^{e-e_0}}$. Thus, any $K>2kp^{e_0}$ will be sufficient.
\end{proof}
\begin{lem}
\label{CloseSets}Fix polytopes $Q$, $Q'\subset M_{\mathbb{R}}$ with extremal points in $M'$. For each $e$, set $B_e = (Q_1\cap\frac{1}{p^e}M)+(Q_2\cap\frac{1}{p^e}M)$, $B'_e = (Q_1+Q_2)\cap \frac{1}{p^e}M$. Then there is a constant $K$ such that for each $e$, and for $\vec{u}\in B'_e$, $d(\vec{u}, B_e)< \frac{K}{p^e}$.
\end{lem}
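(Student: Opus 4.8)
I want to show that every lattice point in the Minkowski sum of two polytopes $Q_1+Q_2$ (intersected with $\tfrac{1}{p^e}M$) lies within $O(1/p^e)$ of the set $B_e$ obtained by first intersecting each polytope with the lattice and then adding. The natural approach is: take $\vec{u}\in B'_e$, so $\vec{u}=\vec{q}_1+\vec{q}_2$ with $\vec{q}_1\in Q_1$, $\vec{q}_2\in Q_2$ (not necessarily in the lattice). I would like to perturb $\vec{q}_1$ and $\vec{q}_2$ to nearby points $\vec{q}_1',\vec{q}_2'$ that lie in $Q_i\cap\tfrac{1}{p^e}M$ and still sum to exactly $\vec{u}$. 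Lemma \ref{PolytopeLatticePoints} already gives points $\vec{r}_1\in Q_1\cap\tfrac{1}{p^e}M$, $\vec{r}_2\in Q_2\cap\tfrac{1}{p^e}M$ with $d(\vec{q}_i,\vec{r}_i)<K_i/p^e$; then $\vec{r}_1+\vec{r}_2\in B_e$ and $d(\vec{u},\vec{r}_1+\vec{r}_2)\le d(\vec{q}_1,\vec{r}_1)+d(\vec{q}_2,\vec{r}_2)<(K_1+K_2)/p^e$. So in fact we do \emph{not} need the sum to be exactly $\vec{u}$; the triangle inequality suffices, and $K=K_1+K_2$ works.

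**Steps, in order.** First I would invoke Lemma \ref{PolytopeLatticePoints} twice, once for $Q_1$ with constant $K_1$ and once for $Q_2$ with constant $K_2$; note that the extremal points of $Q_1$ and $Q_2$ lie in $M'=\tfrac{1}{p^{e_0}}M$ by hypothesis, so the lemma applies. Second, given $\vec{u}\in B'_e=(Q_1+Q_2)\cap\tfrac{1}{p^e}M$, write $\vec{u}=\vec{q}_1+\vec{q}_2$ with $\vec{q}_i\in Q_i$ (possible since $\vec{u}\in Q_1+Q_2$). Third, apply Lemma \ref{PolytopeLatticePoints} to each $\vec{q}_i\in Q_i$ to obtain $\vec{r}_i\in Q_i\cap\tfrac{1}{p^e}M$ with $d(\vec{q}_i,\vec{r}_i)<K_i/p^e$. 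Fourth, observe $\vec{r}_1+\vec{r}_2\in B_e$ by definition of $B_e$, and estimate $d(\vec{u},\vec{r}_1+\vec{r}_2)=d(\vec{q}_1+\vec{q}_2,\vec{r}_1+\vec{r}_2)\le d(\vec{q}_1,\vec{r}_1)+d(\vec{q}_2,\vec{r}_2)<(K_1+K_2)/p^e$. Setting $K=K_1+K_2$ finishes the proof.

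**Main obstacle.** Honestly there is no serious obstacle here; the content is entirely pushed into Lemma \ref{PolytopeLatticePoints}, and what remains is a one-line triangle-inequality argument once that lemma is in hand. The only point requiring a moment's care is making sure the decomposition $\vec{u}=\vec{q}_1+\vec{q}_2$ with $\vec{q}_i\in Q_i$ is available — but that is immediate from the definition of the Minkowski sum $Q_1+Q_2$. (Note also the minor typographical slip in the statement, where $Q$, $Q'$ are introduced but then $Q_1$, $Q_2$ are used; I read these as the same two polytopes.) So the expected difficulty is essentially nil, and the proof is short.
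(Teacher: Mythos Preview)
Your proposal is correct and essentially identical to the paper's own proof: decompose $\vec{u}=\vec{q}_1+\vec{q}_2$ with $\vec{q}_i\in Q_i$, apply Lemma~\ref{PolytopeLatticePoints} to each summand to get nearby lattice points $\vec{r}_i\in Q_i\cap\frac{1}{p^e}M$, and conclude via the triangle inequality with $K=K_1+K_2$. You also correctly flag the $Q,Q'$ versus $Q_1,Q_2$ typo in the statement.
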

\begin{proof}
It suffices to show that given $\vec{v}\in Q_1+Q_2$, $d(\vec{v}, Q_1\cap\frac{1}{p^e}M+Q_2\cap\frac{1}{p^e}M)$ is bounded above by some $\frac{K}{p^e}$. Fix such $\vec{v}$. Then $\vec{v}=\vec{u}+\vec{w}$ with $\vec{u}\in Q_1$ and $\vec{w}\in Q_2$. By Lemma \ref{PolytopeLatticePoints}, for some constants $K_1$, $K_2$, we have $d(\vec{u},Q_1\cap\frac{1}{p^e}M)<\frac{K_1}{p^e}$, and $d(\vec{w},(t\cdot Q_2)\cap\frac{1}{p^e}M)<\frac{K_2}{p^e}$. It follows that $d(\vec{v}, Q_1\cap\frac{1}{p^e}M+Q_2\cap\frac{1}{p^e}M)<\frac{K_1+K_2}{p^e}$.
\end{proof}
\begin{lem}
\label{HalfSpaceComplement} Let $X$ be a polyhedron and $\vec{v}\notin X$. Then for some half-space $H$, $\vec{v}+H$ does not intersect $X$.
\end{lem}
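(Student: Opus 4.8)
The plan is to use the fact that a polyhedron is the intersection of finitely many closed half-spaces together with a standard separation argument. Write $X = \bigcap_{j=1}^m H_j$, where each $H_j = \{\vec{x} \suchthat \vec{x}\cdot\vec{u}_j \geq c_j\}$ is a closed half-space. If $\vec{v}\notin X$, then $\vec{v}$ fails at least one of the defining inequalities, say $\vec{v}\cdot\vec{u}_1 < c_1$. My first move is to isolate this ``bad'' half-space: the complement of $H_1$ is the open half-space $G = \{\vec{x}\suchthat \vec{x}\cdot\vec{u}_1 < c_1\}$, which contains $\vec{v}$ and is disjoint from $X$ (since $X\subseteq H_1$).

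Next I need to massage $G$ into the form $\vec{v}+H$ for a fixed half-space $H$ independent of $\vec{v}$ in a way that still contains $\vec{v}$. Set $\delta = c_1 - \vec{v}\cdot\vec{u}_1 > 0$ and take $H = \{\vec{x}\suchthat \vec{x}\cdot\vec{u}_1 < 0\}$, the open half-space through the origin with the same normal direction (pointing the same way as $G$). I claim $\vec{v}+H \subseteq G$: if $\vec{x}\in H$, then $(\vec{v}+\vec{x})\cdot\vec{u}_1 = \vec{v}\cdot\vec{u}_1 + \vec{x}\cdot\vec{u}_1 < \vec{v}\cdot\vec{u}_1 < c_1$, so $\vec{v}+\vec{x}\in G$. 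Since $\vec{0}\in H$, we also have $\vec{v}\in\vec{v}+H$. Because $G\cap X = \emptyset$ and $\vec{v}+H\subseteq G$, we conclude $(\vec{v}+H)\cap X = \emptyset$, which is exactly what was wanted. (If one prefers $H$ to be closed, take $H = \{\vec{x}\suchthat \vec{x}\cdot\vec{u}_1 \leq -\delta/2\}$ instead; the same computation gives $(\vec{v}+\vec{x})\cdot\vec{u}_1 \leq c_1 - \delta/2 < c_1$, and $\vec{v}\in\vec{v}+H$ since $\vec{0}$ satisfies $\vec{0}\cdot\vec{u}_1 = 0 \leq -\delta/2$ fails — so one must instead translate so that $\vec{v}$ sits on the boundary; but since the paper permits open half-spaces, the open version above is cleanest and suffices.)

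There is essentially no obstacle here: the only thing to be careful about is that ``half-space'' in this paper is allowed to be open or closed (per the remark following the polytope definition), so the open version is legitimate and avoids any fuss about strict versus non-strict inequalities. The key point is simply that $X\notni\vec{v}$ forces violation of one linear inequality defining $X$, and the complementary open half-space — re-centered at $\vec{v}$ by subtracting off the offset — does the job.
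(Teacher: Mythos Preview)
Your approach is essentially the same as the paper's: pick a defining half-space $H_1$ of $X$ violated by $\vec v$, and take $H$ to be (a translate to the origin of) its complement. The paper's two-line proof simply calls the complement itself $H$ and asserts the conclusion; your version is more careful in explicitly recentering so that $H=\{\vec x:\vec x\cdot\vec u_1<0\}$ passes through the origin, which is what the downstream Lemma~\ref{BallInHalfSpace} actually needs.

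One small slip: with your open choice $H=\{\vec x:\vec x\cdot\vec u_1<0\}$ you do \emph{not} have $\vec 0\in H$, so the claim ``$\vec v\in\vec v+H$'' is false. This is harmless, since the lemma as stated does not ask for $\vec v\in\vec v+H$; your inclusion $\vec v+H\subseteq G$ and hence $(\vec v+H)\cap X=\emptyset$ is correct and is all that is required. If you do want $\vec v\in\vec v+H$, just take the closed version $H=\{\vec x:\vec x\cdot\vec u_1\le 0\}$: then $\vec 0\in H$, and for $\vec x\in H$ one still has $(\vec v+\vec x)\cdot\vec u_1\le\vec v\cdot\vec u_1<c_1$, so $\vec v+H\subseteq G$ as before. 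Your parenthetical attempt at a closed variant (shifting the boundary by $\delta/2$) is unnecessary and, as you noticed, doesn't contain $\vec 0$ either.
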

\begin{proof}
Since $\vec{v}\notin X$, $\vec{v}$ lies in the complement, $H$, of some defining half-space of $X$. Thus, $\vec{v} + H$ does not intersect $X$.
\end{proof}
\begin{lem}
\label{BallInHalfSpace} Fix $K>0$. There is a constant $\kappa$ sufficiently large that for any half-space $H\subset M_{\mathbb{R}}$, $H\cap \mathcal{B}(\vec{0},\kappa)$ contains an open ball of radius $K$ around a lattice point $\vec{w}\in H\cap M$.
\end{lem}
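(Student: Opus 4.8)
The plan is to reduce the statement to a single uniform estimate: for every unit normal direction there is a lattice point of bounded norm lying at depth at least $K$ inside the corresponding half-space. In the conventions of this paper a half-space is a set of the form $\{\vec v\suchthat \vec v\cdot\vec u\geq 0\}$ (or its open analogue $\{\vec v\suchthat \vec v\cdot\vec u>0\}$), and since this set depends only on the direction of $\vec u$ I may assume $|\vec u|=1$. Suppose I can find, for each such $\vec u$, a point $\vec w\in M$ with $\vec w\cdot\vec u\geq K$ and $|\vec w|\leq B$, where $B$ depends only on $K$ and the lattice $M$. Then for any $\vec v$ with $|\vec v-\vec w|<K$ one has $\vec v\cdot\vec u = \vec w\cdot\vec u+(\vec v-\vec w)\cdot\vec u \geq K-|\vec v-\vec w|>0$, so $\mathcal B(\vec w,K)\subset H$; and $|\vec v|\leq |\vec w|+|\vec v-\vec w|< B+K$, so $\mathcal B(\vec w,K)\subset\mathcal B(\vec 0,B+K)$. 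Hence $\kappa:=B+K$ (or anything larger) works, and $\vec w\in H\cap M$ since $\vec w\cdot\vec u\geq K>0$.

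To produce such a $\vec w$ uniformly in $\vec u$, fix a $\mathbb Z$-basis $\vec e_1,\dots,\vec e_n$ of $M$ and consider the function $\vec u\mapsto \max_i|\vec u\cdot\vec e_i|$ on the unit sphere of $M_{\mathbb R}$. It is continuous, and it never vanishes, since a unit vector orthogonal to every $\vec e_i$ would be orthogonal to a basis of $M_{\mathbb R}$, hence zero; by compactness of the sphere it attains a positive minimum $\delta>0$. Thus every unit $\vec u$ admits some index $i$ with $|\vec e_i\cdot\vec u|\geq\delta$. Setting $m=\lceil K/\delta\rceil$ and $\vec w=\pm m\vec e_i\in M$ with the sign chosen so that $\vec w\cdot\vec u=m\,|\vec e_i\cdot\vec u|$, I obtain $\vec w\cdot\vec u\geq m\delta\geq K$ and $|\vec w|=m|\vec e_i|\leq \lceil K/\delta\rceil\max_j|\vec e_j|=:B$, a bound independent of $\vec u$. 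This completes the argument with $\kappa=B+K$.

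The only step carrying real content is the compactness argument producing the $\vec u$-independent constant $\delta$ — equivalently, the geometric fact that a half-space through the origin, however narrow it may look near $\vec 0$, still reaches ``deep'' (distance $\geq K$) within a bounded ball around $\vec 0$, so that a suitable integer multiple of some basis vector lands there. Everything else is elementary distance bookkeeping. Note that the proof uses nothing about toric varieties and goes through verbatim for any full-rank lattice, in particular for $\tfrac1{p^{e_0}}M$, which is the form in which the lemma is applied in the sequel.
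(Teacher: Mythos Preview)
Your proof is correct and follows essentially the same route as the paper's: normalize $|\vec u|=1$, find a basis vector $\vec e_i$ with $|\vec u\cdot\vec e_i|$ bounded below, and take $\vec w$ to be a suitable integer multiple of $\pm\vec e_i$. The only difference is that the paper works with the standard basis of $M\simeq\mathbb Z^n$ and writes down the explicit lower bound $|\vec u\cdot\vec e_i|\geq 1/n$ (hence an explicit $\kappa=\lceil nK\rceil+K$), whereas you obtain the lower bound $\delta$ abstractly via compactness of the unit sphere; this makes your argument work verbatim for an arbitrary lattice basis at the cost of an inexplicit constant.
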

\begin{proof}
Fix $H$. For some vector $\vec{u}$, $H = \{\vec{v}\in M_{\mathbb{R}}\suchthat\vec{v}\cdot \vec{u} \geq 0\}$. We may assume $|\vec{u}| = 1$. Let $\vec{e}_i$ be the standard basis vectors. Then for some $i$, $\frac{1}{n}\leq |\vec{u}\cdot \vec{e}_i|$. If necessary, replace $\vec{e}_i$ with $-\vec{e}_i$ so that $\vec{u}\cdot \vec{e}_i>0$. Set $\kappa = \lceil nK\rceil+K$. Then $\vec{w}=\lceil nK\rceil\vec{e}_i$ satisfies the desired conditions: $\vec{w}\in H\cap \mathcal{B}(\vec{0}, \kappa)$; since $\vec{w}\cdot \vec{u}\geq K$, $\mathcal{B}(\vec{w}, K)\subset H$; and clearly $\mathcal{B}(\vec{w}, K)\subset \mathcal{B}(\vec{0}, \lceil nK\rceil+K)$.
\end{proof}
\begin{lem}
\label{SetDifferences}Suppose we are in the situation of Lemma \ref{CloseSets}. Set $P = Q_1+Q_2$. Then for some constant $\kappa$, $B'_e\backslash B_e$ is contained in an open neighborhood around the boundary $\partial P$ of $P$, of radius $\frac{\kappa}{p^e}$. (That is, if $\vec{v}\in B'_e\backslash B_e$, then $d(\vec{v}, \partial P)<\frac{\kappa}{p^e}$.)
\end{lem}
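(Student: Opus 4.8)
The plan is to show that any point $\vec{v}\in B'_e\backslash B_e$ must lie close to $\partial P$ by a contrapositive argument: if $\vec{v}\in P\cap\frac{1}{p^e}M$ is deep inside $P$ (far from the boundary), then I will exhibit an explicit decomposition $\vec{v} = \vec{u}_1+\vec{u}_2$ with $\vec{u}_i\in Q_i\cap\frac{1}{p^e}M$, which forces $\vec{v}\in B_e$. So fix $\kappa$ (to be chosen) and suppose $\vec{v}\in B'_e$ with $d(\vec{v},\partial P)\geq\frac{\kappa}{p^e}$; I want to conclude $\vec{v}\in B_e$.

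First I would use Lemma \ref{CloseSets}: there is a constant $K$ so that $d(\vec{v},B_e)<\frac{K}{p^e}$, hence there is some $\vec{u}_1+\vec{u}_2\in B_e$ (with $\vec{u}_i\in Q_i\cap\frac{1}{p^e}M$) within distance $\frac{K}{p^e}$ of $\vec{v}$. The idea is now to ``correct'' this approximation: write $\vec{v} - (\vec{u}_1+\vec{u}_2) = \vec{\delta}$ with $|\vec{\delta}|<\frac{K}{p^e}$ and $\vec{\delta}\in\frac{1}{p^e}M$, and try to absorb $\vec{\delta}$ by replacing $\vec{u}_1$ with $\vec{u}_1+\vec{\delta}$. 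This works provided $\vec{u}_1+\vec{\delta}$ still lies in $Q_1$. To guarantee that, I would want $\vec{u}_1$ itself to be deep inside $Q_1$ — but $\vec{u}_1$ is only known to be near $\vec{v}$, and $\vec{v}$ being deep in $P=Q_1+Q_2$ does not immediately make $\vec{u}_1$ deep in $Q_1$. This is the main obstacle, and I expect to resolve it using Lemmas \ref{HalfSpaceComplement}, \ref{BallInHalfSpace}, and \ref{PolytopeLatticePoints}: since $\vec{v}$ is at distance $\geq\frac{\kappa}{p^e}$ from every defining half-space of $P$, and the defining half-spaces of $P=Q_1+Q_2$ are (up to translation) among those of $Q_1$ and $Q_2$ (Minkowski sums of polytopes have facet normals drawn from the summands' facet normals), I can choose the decomposition so that $\vec{u}_1$ sits at distance $\geq\frac{\kappa/2}{p^e}$, say, inside $Q_1$ as well — by first picking a good decomposition of the interior point $\vec{v}$ in $M_{\mathbb{R}}$ and then using Lemma \ref{PolytopeLatticePoints} to move each piece to a nearby lattice point without leaving the respective $Q_i$.

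Concretely, the steps in order: (1) record that the facet normals of $P$ are among those of $Q_1$ and $Q_2$, so ``$\vec{v}$ deep in $P$'' can be propagated to a decomposition with both pieces deep in their polytopes; (2) given $\vec{v}\in P$ with $d(\vec{v},\partial P)\geq\frac{\kappa}{p^e}$, produce $\vec{w}_1\in Q_1,\vec{w}_2\in Q_2$ with $\vec{w}_1+\vec{w}_2=\vec{v}$ and each $\vec{w}_i$ at distance $\geq\frac{c\kappa}{p^e}$ from $\partial Q_i$ for a fixed constant $c>0$; (3) apply Lemma \ref{PolytopeLatticePoints} to get lattice points $\vec{u}_i\in Q_i\cap\frac{1}{p^e}M$ with $|\vec{u}_i-\vec{w}_i|<\frac{K_i}{p^e}$, still keeping each $\vec{u}_i$ at distance $\geq\frac{1}{p^e}(c\kappa-K_i)$ from $\partial Q_i$; (4) set $\vec{\delta} = \vec{v}-\vec{u}_1-\vec{u}_2\in\frac{1}{p^e}M$, note $|\vec{\delta}|\leq\frac{K_1+K_2}{p^e}$, and observe that if $\kappa$ is chosen with $c\kappa - K_1 > K_1+K_2$ (i.e. $\kappa$ large enough in terms of $K_1,K_2,c$) then $\vec{u}_1+\vec{\delta}\in Q_1$; (5) conclude $\vec{v} = (\vec{u}_1+\vec{\delta})+\vec{u}_2\in B_e$, a contradiction. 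Hence every $\vec{v}\in B'_e\backslash B_e$ satisfies $d(\vec{v},\partial P)<\frac{\kappa}{p^e}$, which is the claim. The one point I would want to double-check carefully is step (2) — producing a decomposition of an interior point into interior points with quantitative control — which is where Lemma \ref{HalfSpaceComplement} and the half-space geometry of Lemma \ref{BallInHalfSpace} do the real work; everything else is bookkeeping with the triangle inequality.
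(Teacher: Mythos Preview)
Your route differs from the paper's, and step (2) is the load-bearing part that is not yet justified. The paper does \emph{not} attempt to decompose $\vec v$ as a sum of interior points of $Q_1$ and $Q_2$. Instead it argues by contradiction against Lemma~\ref{CloseSets}: assuming some $\vec v\in B'_e\setminus B_e$ sits at distance $\ge\kappa/p^e$ from $\partial P$, it selects a half-space $H$ via Lemma~\ref{HalfSpaceComplement} and then, via Lemma~\ref{BallInHalfSpace}, a lattice vector $\vec w\in H\cap M$ with $|\vec w|<\kappa$ and $\mathcal B(\vec w,K)\subset H$, so that the translate $\vec u=\vec v+\tfrac{1}{p^e}\vec w$ still lies in $B'_e$ yet has $\mathcal B(\vec u,K/p^e)$ disjoint from $B_e$, forcing $d(\vec u,B_e)\ge K/p^e$ in violation of Lemma~\ref{CloseSets}. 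So in the paper the half-space lemmas are used to \emph{translate} $\vec v$ within $P$ to a point provably far from $B_e$; they are never used to split $\vec v$.

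Your step (2) --- that a point at distance $\ge\epsilon$ from $\partial(Q_1+Q_2)$ can be written as $\vec w_1+\vec w_2$ with $d(\vec w_i,\partial Q_i)\ge c\epsilon$ --- is a separate convex-geometric fact, and it follows neither from the facet-normal observation in step (1) nor from Lemmas~\ref{HalfSpaceComplement} and~\ref{BallInHalfSpace}: the first of those concerns a point \emph{outside} a polyhedron, the second locates a lattice point inside a single half-space, and neither says anything about decomposing an interior point of a Minkowski sum. The statement is true when both $Q_i$ are full-dimensional, but the honest argument is a homothety toward interior points: if $\mathcal B(\vec c_i,r_i)\subset Q_i$, then $\lambda\vec c_i+(1-\lambda)Q_i\subset\{x\in Q_i:d(x,\partial Q_i)\ge\lambda r_i\}$, so every point of $\lambda(\vec c_1+\vec c_2)+(1-\lambda)P$ admits the desired decomposition, and any $\vec v$ with $d(\vec v,\partial P)\ge\epsilon$ lies in this shrunk copy once $\lambda$ is of order $\epsilon/\mathrm{diam}(P)$. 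With that fix your steps (3)--(5) go through and the construction is arguably cleaner than the paper's indirect argument; but as written, invoking Lemmas~\ref{HalfSpaceComplement} and~\ref{BallInHalfSpace} for step (2) is a genuine gap.
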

\begin{proof}
Let $K$ be a constant chosen as in Lemma \ref{CloseSets} so that for all sufficiently large $e$, each monomial in $B'_e$ lies within distance $\frac{K}{p^e}$ of a point in $B_e$. Fix $\kappa$ sufficiently large that for any half-space $H$, $H\cap \mathcal{B}(\vec{0},\kappa)$ contains an open ball of radius $K$ around a lattice point $\vec{w}\in H\cap M$. Suppose for the sake of contradiction that the claim is false. Then there is a monomial $\vec{v}\in B'_e\backslash B_e$ such that $\mathcal{B}(\vec{v}, \frac{\kappa}{p^e})\subset P$. Fix $H$ so that $\vec{v}+H$ does not intersect $P$ (such $H$ exists by Lemma \ref{HalfSpaceComplement}). For our chosen $K$, fix $\vec{w}$ as in Lemma \ref{BallInHalfSpace}.

Let $\vec{u} = \vec{v}+\frac{1}{p^e}\vec{w}$. Since $d(\vec{v}, \vec{u})<\frac{\kappa}{p^e}$, we see that $\vec{u}\in P$, so $\vec{u}\in B'_e$. On the other hand, by our choice of $\kappa$, $\mathcal{B}(\vec{u},\frac{K}{p^e})\subset\vec{v}+H$. It follows (by the statement of Lemma \ref{BallInHalfSpace}) that this ball does not intersect $B_e$, so $d(\vec{u}, B_e)\geq\frac{K}{p^e}$. Thus, by Lemma \ref{CloseSets}, $\vec{u}\notin B'_e$, a contradiction.
\end{proof}

\begin{proof}(Proof of Lemma \ref{TriplesErrorCorrect}.)
We apply Lemma \ref{SetDifferences} with $Q_1 = P_{\sigma}^D$, $Q_2 = t\cdot \Newt(\mathfrak{a})$, so that $\PDa=(Q_1+(-Q_2))\cap \sigma^{\dual}$. By Lemma \ref{SetDifferences}, the difference $a'_e-a_e^{\mathfrak{a}}$ is bounded by the number $n_e$ of $\frac{1}{p^e}M$-lattice points in a neighborhood $\mathcal{B}(\partial \PDa, \frac{\kappa}{p^e})$ of $\partial \PDa$ of radius $\frac{\kappa}{p^e}$. The quantity $\frac{n_e}{p^{ed}}$ is smaller than the volume of the union of all cubes intersecting $\mathcal{B}(\partial \PDa, \frac{\kappa}{p^e})$. That union of cubes is, in turn, contained in $\mathcal{B}(\partial \PDa, \frac{\kappa + n}{p^{ed}})$.

We conclude that $\lim_{e\to\infty}\frac{n_e}{p^{ed}} \leq \lim_{e\to\infty} \Vol(\mathcal{B}(\partial \PDa, \frac{\kappa + \sqrt{n}}{p^{ed}})) = 0$, so  $\lim_{e\to\infty}\frac{n_e}{p^{ed}} = 0$, as we desired to show. 
\end{proof}

\subsection{$\mathbb{Q}$-Gorenstein Triples}
Finally, we prove Corollary \ref{GorensteinTriples}, which gives a particularly nice characterization of $\PDa$ when $(R, D)$ is a $\mathbb{Q}$-Gorenstein.

\begin{cor}\label{GorensteinTriples}
Let $R$ be the coordinate ring of an affine toric variety, $D$ a divisor on $\Spec R$, and $\mathfrak{a}$ a monomial ideal, presented as in Theorem \ref{FSignatureOfTriples}. Suppose that the pair $(R, D)$ is $\mathbb{Q}$-Gorenstein. Then $s(R, D, \mathfrak{a}^t) = \Vol(P_{\sigma}^D\cap t\cdot \Newt(\mathfrak{a}))$.
\end{cor}

\begin{proof}[Proof of Corollary \ref{GorensteinTriples}]
Since the pair $(R, D)$ is $\mathbb{Q}$-Gorenstein, for some $\vec{w}\in M\otimes\mathbb{Q}, \vec{w}\cdot\vec{v}_i=1-a_i$ for each $i$. (Just let $\vec{w}$ be the negative of the vector given by Lemma \ref{QGorensteinPair}.) Set $\phi$ to be the map $\vec{u}\mapsto\vec{w}-\vec{u}$. We claim that $\phi$ is a volume-preserving bijection from $\PDa$ to $(t\cdot\Newt(\mathfrak{a}))\cap P_{\sigma}^D$. The corollary will follow immediately.

Before we prove the claim, we first check that $\phi$ maps $P_{\sigma}^D$ to itself. Suppose $\vec{z}\in P_{\sigma}^D$. Then $0\leq \vec{z}\cdot\vec{v}_i$, so $(\vec{w}-\vec{z})\cdot \vec{v}_i = (1-a_i)-(\vec{z}\cdot\vec{v}_i) \leq 1-a_i$. Similarly, $0\leq (\vec{w}-\vec{z})\cdot \vec{v}_i$. We conclude that $\phi(\vec{z})\in P_{\sigma}^D$.

Returning to our claim: the map $\phi$ is clearly linear, volume-preserving, and self-inverse, so it suffices to show that $\phi(\PDa)= (t\cdot\Newt(\mathfrak{a}))\cap P_{\sigma}^D$. Suppose $\vec{u}\in \PDa$. In particular, $\vec{u}\in P_{\sigma}^D$, so (as we just showed) $\phi(\vec{u})\in P_{\sigma}^D$.

Since $\vec{u}\in \PDa$, we may write $\vec{u} = \vec{x}-\vec{y}$, with $\vec{x}\in P_{\sigma}^D, \vec{y}\in (t\cdot\Newt(\mathfrak{a}))\cap P_{\sigma}^D$. Then $\vec{w}-\vec{u} = \vec{y}+(\vec{w}-\vec{x})$. Since $(\vec{w}-\vec{x})\cdot\vec{v}_i \geq (1-a_i) - (1-a_i) = 0$, we conclude that $\vec{w}-\vec{x}\in \sigma^{\dual}$. Since $t\cdot\Newt(\mathfrak{a})$ is closed under addition by vectors in $\sigma^{\dual}$, we conclude that $\phi(\vec{u})=\vec{y}+(\vec{w}-\vec{x})\in t\cdot\Newt(\mathfrak{a})$. 

So far, we've shown that $\phi(\PDa)\subset P_{\sigma}^D\cap t\cdot\Newt(\mathfrak{a})$. On the other hand, suppose that $\vec{y}\in P_{\sigma}^D\cap t\cdot\Newt(\mathfrak{a})$. We wish to show that $\vec{w}-\vec{y}\in \PDa$. Since $\vec{w}\in P_{\sigma}^D$, $\vec{w}-\vec{y}\in P_{\sigma}^D - ((t\cdot\Newt(\mathfrak{a}))\cap P_{\sigma}^D)$. Moreover, since $\vec{y}\in P_{\sigma}^D$, we have that $\phi(\vec{y})\in P_{\sigma}^D$. We conclude that $\phi(\vec{y})\in (P_{\sigma}^D - ((t\cdot\Newt(\mathfrak{a}))\cap P_{\sigma}^D))\cap P_{\sigma}^D = \PDa$.

It follows that $\phi$ is a volume-preserving bijection. Thus, $s(R, D, \mathfrak{a}^t) = \Vol(\PDa) = \Vol( P_{\sigma}^D\cap t\cdot\Newt(\mathfrak{a}))$, as we desired to show.
\end{proof}


\section{Alternative Monomial Ring Presentations}

\subsection{A Slightly More General $F$-Signature Formula}

Theorem \ref{AffineToricFSignature} can be made to apply to monomial rings that are not quite presented ``torically." In particular, suppose $R = k[S]$, where $S = L\cap \sigma^{\dual}$ for \emph{any} lattice $L$, not just $L = M$. Then it's not difficult to apply a slightly modified version of Theorem \ref{AffineToricFSignature} to this presentation of $R$.

\begin{defn}
Let $\sigma$ be a cone as in Remark \ref{ToricAssumptions}, with primitive generators $\vec{v}_1,\ldots, \vec{v}_r$. Let $L$ be a lattice. For each $i$, let $c_i = \min_{\vec{v}\in L} |\vec{v}\cdot\vec{v}_i|$. We define $P_{\sigma}^L\subset \sigma$ to be the polytope $\{\vec{w}\in M_{\mathbb{R}}\suchthat \forall i, 0\leq \vec{w}\cdot \vec{v}_i< c_i\}$. (Note that if $L = M$, then $P_{\sigma}^L = P_{\sigma}$, as $c_i = 1$ for each $i$.)
\end{defn}

\begin{thm}\label{MoreGeneralFSignature}
(We use the conventions of Remark \ref{ToricAssumptions}.) Let $L\subset M$ be a sublattice, and set $S = \sigma^{\dual}\cap L$. (By Remark \ref{ConeLattice}, $L = \Lattice(S)$.) If $\sigma$ is a full-dimensional cone, then $s(R) = \Vol(P_{\sigma}^L),$ with the volume measured with respect to the lattice $L$. Moreover, for each $e$, $a_e = \#(P_{\sigma}^L\cap \frac{1}{p^e}L)$.
\end{thm}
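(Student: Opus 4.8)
The plan is to rerun the proof of Theorem~\ref{AffineToricFSignature} with the lattice $L$ in place of $M$ throughout; I would first note that $L$ is a full-rank sublattice (this is implicit in the appeal to Lemma~\ref{ConeLattice}, and is what makes each $c_i$ positive and $\dim R = n$). Since $\sigma^{\dual}$ is a cone, $\tfrac{1}{p^e}S = \tfrac{1}{p^e}(\sigma^{\dual}\cap L) = \sigma^{\dual}\cap\tfrac{1}{p^e}L$, so $R^{1/p^e} = k[H]$ with $H = \sigma^{\dual}\cap\tfrac{1}{p^e}L$, a finitely generated $S$-module by $F$-finiteness; and because $\sigma$ is full-dimensional, $\sigma^{\dual}$ is strongly convex (Lemma~\ref{FullDimensionalToStronglyConvex}), so $\vec{0}$ is the only unit in $H$. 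Lemma~\ref{FreeRankFormula} then gives
$$a_e = \#\bigl\{\vec{v}\in\sigma^{\dual}\cap\tfrac{1}{p^e}L \ \bigm|\ \forall\,\vec{k}\in L\backslash\sigma^{\dual},\ \vec{v}+\vec{k}\notin\sigma^{\dual}\bigr\} = \#\bigl(P^{'L}_{\sigma}\cap\tfrac{1}{p^e}L\bigr),$$
where $P^{'L}_{\sigma} := \{\vec{v}\in\sigma^{\dual}\mid \forall\,\vec{k}\in L\backslash\sigma^{\dual},\ \vec{v}+\vec{k}\notin\sigma^{\dual}\}$ (note $\vec{v}+\vec{k}$ automatically lies in $\tfrac{1}{p^e}L$). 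So the whole theorem reduces to the analogue of Lemma~\ref{PCCharacterization}, namely $P^{'L}_{\sigma} = P^{L}_{\sigma}$.

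The one genuinely new ingredient is the role of the $c_i$. For each $i$ the set $\{\vec{k}\cdot\vec{v}_i\mid\vec{k}\in L\}$ is a subgroup of $\mathbb{Z}$ (as $L\subset M$, $\vec{v}_i\in N$), nonzero because $L$ is full rank and $\vec{v}_i\neq\vec{0}$; hence it is $c_i\mathbb{Z}$ with $c_i$ the positive integer in the definition of $P^{L}_{\sigma}$. I would then copy the proof of Lemma~\ref{PCCharacterization} with ``$1$'' replaced by ``$c_j$''. For $P^{L}_{\sigma}\subseteq P^{'L}_{\sigma}$: given $\vec{v}\in P^{L}_{\sigma}$ and $\vec{k}\in L\backslash\sigma^{\dual}$, choose $j$ with $\vec{k}\cdot\vec{v}_j < 0$; then $\vec{k}\cdot\vec{v}_j\in c_j\mathbb{Z}$ forces $\vec{k}\cdot\vec{v}_j\leq -c_j$, so $(\vec{v}+\vec{k})\cdot\vec{v}_j < c_j - c_j = 0$ and $\vec{v}+\vec{k}\notin\sigma^{\dual}$. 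For the reverse inclusion: if $\vec{v}\notin P^{L}_{\sigma}$, then either $\vec{v}\cdot\vec{v}_j < 0$ for some $j$, whence $\vec{v}\notin\sigma^{\dual}\supseteq P^{'L}_{\sigma}$; or $\vec{v}\in\sigma^{\dual}$ and $\vec{v}\cdot\vec{v}_j\geq c_j$ for some $j$, in which case I pick $\vec{k}_0\in L$ with $\vec{k}_0\cdot\vec{v}_j = -c_j$ (by definition of $c_j$) and $\vec{k}_1\in L$ in the relative interior of the facet $F_j = \vec{v}_j^{\perp}\cap\sigma^{\dual}$ (possible since $L$ is full rank, hence meets the relative interior of every rational facet of $\sigma^{\dual}$), and set $\vec{k} = \vec{k}_0 + m\vec{k}_1$ for $m\gg 0$; then $\vec{k}\cdot\vec{v}_j = -c_j < 0$ but $\vec{k}\cdot\vec{v}_i\geq 0$ for $i\neq j$, so $\vec{k}\in L\backslash\sigma^{\dual}$, while $(\vec{v}+\vec{k})\cdot\vec{v}_i\geq 0$ for all $i$, i.e.\ $\vec{v}+\vec{k}\in\sigma^{\dual}$; hence $\vec{v}\notin P^{'L}_{\sigma}$. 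This proves $P^{'L}_{\sigma} = P^{L}_{\sigma}$ and thus $a_e = \#(P^{L}_{\sigma}\cap\tfrac{1}{p^e}L)$, the second claim of the theorem.

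Finally, since $\dim R = n$, we get $s(R) = \lim_{e\to\infty}a_e/p^{en} = \lim_{q\to\infty}\#(P^{L}_{\sigma}\cap\tfrac{1}{q}L)/q^n$, and Lemma~\ref{PolytopeVolume}, applied with $L$ in place of $M$, gives $s(R) = \Vol_{L}(P^{L}_{\sigma})$ (volume measured relative to $L$). I expect the only real friction to be the book-keeping in the reverse inclusion above: checking that the constants $c_j$ play exactly the role the integer $1$ plays in the original Lemma~\ref{PCCharacterization}, and that full-rankness of $L$ is precisely what allows lattice points of $L$ into the relative interiors of the facets of $\sigma^{\dual}$; the limit and volume steps are unchanged from Theorem~\ref{AffineToricFSignature}. (One could instead reduce to a genuinely toric presentation by rescaling coordinates, but tracking the resulting change of lattice through the volume computation is messier than the direct argument.)
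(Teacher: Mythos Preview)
Your proposal is correct and follows exactly the paper's own approach: the paper's proof simply says to rerun Theorem~\ref{AffineToricFSignature} with $L$ in place of $M$ and, in Lemma~\ref{PCCharacterization}, replace each ``$1$'' by ``$c_i$,'' leaving the verification to the reader. You have carried out precisely that verification, including the observation that $\{\vec{k}\cdot\vec{v}_i\mid\vec{k}\in L\}=c_i\mathbb{Z}$ and the full-rank argument guaranteeing $L$ meets the relative interior of each facet $F_j$.
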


\begin{proof}
The proof is essentially the same as that of Theorem \ref{AffineToricFSignature} with $M$ replaced by $L$, except that in the supporting Lemma $\ref{PCCharacterization}$, for each $i$, we replace the condition $0\leq \vec{v}_i < 1$ with $0\leq \vec{v}_i < c_i$. (In the original proof, we made use of the fact that $c_i = 1$ for $L = M$. It is easily checked that the proof holds in this more general case if we just replace each 1 with $c_i$ as necessary.)
\end{proof}

\begin{ex}
Theorem \ref{MoreGeneralFSignature} may be used to recover the $F$-signature of a Veronese subring $R^{(n)}$ of a polynomial ring $R = k[x_1,\ldots, x_n]$. (This computation has already been performed in \cite{HL04} and \cite{Sin05}.) In particular, $R^{(n)}=k[\sigma^{\dual}\cap L]$, where $\sigma$ is the first orthant and $L\subset M =\mathbb{Z}^n$ is the lattice of vectors whose coordinates sum to a multiple of $n$. It's easily checked that for such $L$ and $\sigma$, $c_i = 1$ for all $i$, so that $P_{\sigma}^L = P_{\sigma}$. Moreover, $\#(M/L) = n$, so $\Vol_L = \frac{1}{n}\cdot\Vol_M$. We conclude that $s(R^{(n)}) = \Vol_L(P_{\sigma}) = \frac{1}{n}\Vol_M(P_{\sigma}) = \frac{1}{n}s(R)$.
\end{ex}

\subsection{A New Proof of an Old $F$-Signature Formula}

Now we can provide an elementary proof of the $F$-signature formula given by Singh. First, we will need to discuss a few relevant properties of monomial rings.

\begin{defn}
Let $S$ be a semigroup of monomials contained in the semigroup $T$ generated by monomials $x_1,\ldots, x_n$. (So $k[A]$ is the polynomial ring $k[x_1,\ldots, x_n]$.) Then $S$ is \emph{full} if $\Frac{k[S]}\cap k[x_1,\ldots, x_n] = k[S]$. Equivalently, $\Lattice(S)\cap T = S$.
\end{defn}

\begin{defn}
Let $S$ be a semigroup of monomials contained in the semigroup $T$ generated by monomials $x_1,\ldots, x_n$. Then we say that $S$ satisfies property $(*)$ if the following holds: consider any variable $x_i\in T$. Then there exist monomials $\zeta, \eta\in k[S]$ such that $\frac{\zeta}{\eta}$, as a fraction in $\Frac{k[T]}$ in lowest terms, can be written as $\frac{\tau}{x_i}$ (where $\tau$ is a monomial in $S$ but not necessarily in $T$). Equivalently, the lattice $L\subset\mathbb{Z}^n$ generated by $S$ should contain, for each $i$, an element with $i^{th}$ coordinate equal to -1.
\end{defn}

\begin{thm}[Singh]\label{SinghTheorem}
Let $R\subset A = k[x_1,\ldots, x_n]$ be a subring generated by finitely many monomials, $R = k[S]$, where $S$ is a finitely generated semigroup. Suppose $k$ is perfect, and let $m_A$ be the homogeneous maximal ideal of $A$. Assuming that $R$ is presented so that $S$ is full and satisfies property $(*)$, the $F$-signature of $R$ is $s(R) = \lim_{e\to\infty} \frac{l(R/(m_A^{[p^e]}\cap R))}{p^{ed}}.$ In particular, $a_e = l(R/(m_A^{[p^e]}\cap R))$.
\end{thm}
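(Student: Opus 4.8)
The plan is to identify $m_A^{[p^e]}\cap R$ with the ideal $I_e$ from Remark \ref{IeDef}, so that the formula is an immediate consequence of that characterization of $F$-signature. Concretely, I would apply Theorem \ref{MoreGeneralFSignature} with $S$ presented as $S = \sigma^{\dual}\cap L$, where $L = \Lattice(S)\subset \mathbb{Z}^n$; this is legitimate since, by Lemma \ref{ConeLattice}, $\Lattice(\sigma^{\dual}\cap L) = L$, and since $S$ is \emph{full} we have $S = L\cap \sigma^{\dual}$ precisely (here $\sigma^{\dual}$ is the cone over $S$ inside the first orthant). The hypothesis that $\sigma$ is full-dimensional — equivalently that $S$ spans a full-dimensional cone — will need to be noted; it holds because $R$ has no torus factors (otherwise $a_e$ grows too fast and both sides are handled by the product formula, or one simply restricts to that case as elsewhere in the paper). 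Theorem \ref{MoreGeneralFSignature} then gives $a_e = \#(P_\sigma^L \cap \tfrac{1}{p^e}L)$, where the relevant monomials generating free summands of $R^{1/p^e}$ are exactly those $\chi^{\vec v}$, $\vec v\in \tfrac{1}{p^e}S$, with $\vec v + \vec k \notin \tfrac{1}{p^e}S$... no: with the property from Lemma \ref{FreeRankFormula}, $\forall \vec k\in L\setminus S,\ \vec v + \vec k\notin \tfrac{1}{p^e}S$.

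The key step is to show that $R/(m_A^{[p^e]}\cap R)$ has length equal to $a_e$, and in fact that a $k$-basis of $R/(m_A^{[p^e]}\cap R)$ corresponds bijectively to the monomials of $R^{1/p^e}$ generating free $R$-summands, after rescaling by $p^e$. A monomial $x^{\vec a}\in A$ lies in $m_A^{[p^e]}$ iff some coordinate $a_i \geq p^e$. Dividing through by $p^e$, the monomials of $R$ \emph{not} in $m_A^{[p^e]}$ correspond to $\vec v = \tfrac{1}{p^e}\vec a \in S$ with every coordinate $< 1$, i.e. $\vec v\in \tfrac{1}{p^e}S$ with $0\le v_i < 1$ for all $i$. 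So $l(R/(m_A^{[p^e]}\cap R)) = \#\{\vec v\in \tfrac{1}{p^e}S : 0\le v_i<1\ \forall i\}$. Now I would match this with the free-summand count: the condition "$\forall \vec k\in L\setminus S,\ \vec v+\vec k\notin \tfrac{1}{p^e}S$" for $\vec v\in\tfrac1{p^e}S$ should be shown, using property $(*)$, to be equivalent to "$v_i<1$ for all $i$." One direction: if some $v_i\ge 1$, property $(*)$ supplies $\vec k\in L$ with $i$th coordinate $-1$ and (after adding a suitable element of $S$, as in the proof of Lemma \ref{PCCharacterization}) all other coordinates $\ge 0$; then $\vec v+\vec k$ has all coordinates $\ge 0$, hence lies in $\sigma^{\dual}$, and lies in $L$, hence in $S$ by fullness — so $\vec v$ is not one of the special monomials. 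Conversely, if $v_i<1$ for all $i$ and $\vec k\in L\setminus S$, then (by fullness) $\vec k\notin\sigma^{\dual}$, so $k_j<0$, hence $k_j\le -1$ (integrality), hence $v_j+k_j<0$ and $\vec v+\vec k\notin\sigma^{\dual}\supset S$. This is exactly the argument of Lemma \ref{PCCharacterization} with the hyperplane normals $\vec v_i$ replaced by the coordinate vectors $\vec e_i$ cutting out the first orthant, and property $(*)$ playing the role that $\Lattice(S)=M$ played there.

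Assembling: $l(R/(m_A^{[p^e]}\cap R)) = \#\{\vec v\in\tfrac1{p^e}S: 0\le v_i<1\} = a_e$, so $s(R) = \lim_{e\to\infty} a_e/p^{ed} = \lim_{e\to\infty} l(R/(m_A^{[p^e]}\cap R))/p^{ed}$, using the perfect-residue-field form of the definition. The main obstacle I anticipate is purely bookkeeping: checking that fullness and property $(*)$ are exactly the two hypotheses needed to force $c_i = 1$ in $P_\sigma^L$ (so that $P_\sigma^L$ is literally the unit cube intersected with $\sigma^{\dual}$) and to run the $\vec k$-construction, and being careful that the "shift by an element of $S$ to make the other coordinates nonnegative" step is available — this needs $\sigma^{\dual}$ to have nonempty interior, i.e. full-dimensionality of $\sigma$, which is already in force. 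No genuinely hard analytic or algebraic input is required beyond Theorem \ref{MoreGeneralFSignature} and Remark \ref{IeDef}; the content is the combinatorial translation.
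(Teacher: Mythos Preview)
Your approach is correct and essentially matches the paper's: take $\sigma^{\dual}$ to be the first orthant itself (fullness is exactly the hypothesis that makes $S = L \cap \sigma^{\dual}$ hold with this choice, so the primitive generators are the $\vec e_i$), apply Theorem \ref{MoreGeneralFSignature}, and use property $(*)$ to force $c_i = 1$, so that $P_\sigma^L$ is the open unit cube. Your explicit re-derivation of the Lemma \ref{PCCharacterization} argument is harmless but redundant once Theorem \ref{MoreGeneralFSignature} is invoked; the paper simply observes $c_i = 1$ and rescales by $p^e$ to identify $\#(P_\sigma^L \cap \tfrac{1}{p^e}L)$ with $\#\{\vec v \in L : 0 \le v_i < p^e\} = l(R/(m_A^{[p^e]}\cap R))$.
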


\begin{proof}
We are given that $S = \Lattice(S)\cap T = \Lattice(S) \cap \sigma^{\dual}$, where $\sigma$ is the first orthant, with primitive generators $\vec{v}_i$ equal to the unit vectors in $\mathbb{R}^n$. Thus, we may apply Theorem \ref{MoreGeneralFSignature} to the cone $\sigma$ and the lattice $L = \Lattice(S)$. It remains only to show that $l(R/(m_A^{[p^e]}\cap R)) = \#(P_{\sigma}^L\cap \frac{1}{p^e}L)$. Since $c_i = 1$ for each $i$, the right-hand side is $\#\{\vec{v}\in \frac{1}{p^e}L\suchthat 0\leq \vec{v}\cdot\vec{v}_i<1\}$. The left-hand side is equal to the number of $\vec{v}\in L$ whose coordinates are all less than $p^e$, which is $\#\{\vec{v}\in L\suchthat 0\leq \vec{v}\cdot\vec{v}_i<p^e\}$. Dividing all elements of the left-hand side by $q$, we see that the left-hand side and right-hand side are equal. Thus, $s(R) = \lim_{e\to\infty} \frac{a_e}{p^{ed}} = \lim_{e\to\infty} \frac{l(R/(m_A^{[p^e]}\cap R))}{p^{ed}}.$
\end{proof}

\section{Appendix}

\subsection{Some supporting $F$-signature results}

Remark \ref{ImperfectResidueField} generalizes our main theorem to the case of an imperfect residue field. Theorem \ref{GradedProductFSignature}, computing the $F$-signature of a product, is used in our proof of theorem \ref{AffineToricFSignature} as a means of avoiding discussion of the special case that our toric variety has torus factors. Theorem \ref{LocalProductFSignature} is the local version of Theorem \ref{GradedProductFSignature}. Lemma \ref{GradedToLocal} demonstrates the equivalence of local and $\mathbb{N}$-graded $F$-signature computations.

\begin{rem}\label{ImperfectResidueField}
We wish to extend Theorem \ref{AffineToricFSignature} to the case of an imperfect (but still $F$-finite) residue field. One can show using \cite{Yao06} (Remark 2.3) that $F$-signature is residue field-independent. We give an less general but more concrete argument. Suppose $k$ is not perfect. The arguments of Theorem \ref{AffineToricFSignature} still compute the asymptotic growth rate of the number of splittings of $k[\frac{1}{p^e}S]$: $$\lim_{e\to\infty}\frac{\mbox{free rank}(k[\frac{1}{p^e}S])}{p^{ed}} = \Vol(P_{\sigma}).$$ But for imperfect $k$, $R^{1/p^e} = k^{1/p^e}[\frac{1}{p^e}S]\simeq k^{1/p^e}\otimes_k k[\frac{1}{p^e}S]$. In particular, $R^{1/p^e}$ is a free $k[\frac{1}{p^e}S]$-module of rank $[k^{1/p^e}:k]=p^{e\alpha}$. It follows that the free rank of $R^{1/p^e}$ is $p^{e\alpha}$ times the free rank of $k[\frac{1}{p^e}S]$. Thus, by Definition \ref{FSignatureDefinition}, as well as the above formula, we see immediately that $s(R) = \Vol(P_{\sigma})$.

This approach generalizes to the case of pairs and triples. In the pairs case, let $\Sigma_e^D$ denote the set of monomials in $R(p^eD)^{1/p^e}$. Regardless of whether $k$ is perfect, the arguments of Theorem \ref{FSignatureOfPairs} still compute the asymptotic growth rate of the number of splittings of $k[\Sigma_e^D]$ that also split from $k[\frac{1}{p^e}S]$ to be $\Vol(P_{\sigma}^D)$. However, $R(p^eD)^{1/p^e}\simeq k^{1/p^e}\otimes_k k[\Sigma_e^D]$, so the number of splittings of $R(p^eD)^{1/p^e}$ that also split from $R^{1/p^e}$ is $p^{e(d+\alpha)}\cdot \Vol(P_{\sigma}^D)$, and $s(R, D) = \Vol(P_{\sigma}^D)$, as we desired to show. 

In the triples case, let $\Sigma_e^{D,\mathfrak{a}}$ denote the set of monomials in $((I_e^D)^{1/p^e}:(\overline{\mathfrak{a}^{p^e t}})^{1/p^e})$. Regardless of whether $k$ is perfect, the arguments of Theorem \ref{FSignatureOfTriples} still compute the asymptotic growth rate of $k[\frac{1}{p^e}S]/(\Sigma_e^{D,\mathfrak{a}})$ to be $\Vol(\PDa)$. Now consider Lemma \ref{TriplesInterp}. We see that $F_*^eR/((I_e^D)^{1/p^e}:(\overline{\mathfrak{a}^{p^e t}})^{1/p^e}) \simeq k^{1/p^e}\otimes_k k[\frac{1}{p^e}S]/(\Sigma_e^{D,\mathfrak{a}})$. Thus, $a_e^{\mathfrak{a}} = p^{e(d+\alpha)}\cdot \Vol(\PDa)$, and $s(R, D, \mathfrak{a}) = \Vol(\PDa)$, as we desired to show.
\end{rem}


Now we'll show that the $F$-signature of a product of varieties (i.e., the $F$-signature a tensor product of rings over the appropriate field) is the product of the $F$-signatures. We'll give proofs in both the local and graded cases.

\begin{thm}\label{GradedProductFSignature}
Let $A$ be a semigroup (e.g., $\mathbb{N}$ or $\mathbb{Z}^n$). Let $R$ and $S$ be $A$-graded rings containing a perfect field $k$, each with zeroeth graded piece equal to $k$. Then $s(R\otimes_k S) = s(R)\cdot s(S)$. 
\end{thm}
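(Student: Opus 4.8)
The plan is to compute the free rank $a_e(R\otimes_k S)$ of $(R\otimes_k S)^{1/p^e}$ directly and show it factors as $a_e(R)\cdot a_e(S)$. First I would observe that since $R$ and $S$ are $A$-graded with zeroeth piece $k$, the tensor product $T = R\otimes_k S$ is $(A\times A)$-graded (or $A$-graded, collapsing via addition) with zeroeth piece $k$, so free rank is well-defined for $T$-modules by the conventions of Remark \ref{RingAssumptions}. Next, using $F$-finiteness and perfectness of $k$, I would identify $(R\otimes_k S)^{1/p^e} \cong R^{1/p^e}\otimes_k S^{1/p^e}$ as $(R\otimes_k S)$-modules; this is the standard fact that Frobenius commutes with tensor over a perfect field, together with $k^{1/p^e}=k$.

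The core step is then to decompose $R^{1/p^e}\otimes_k S^{1/p^e}$ over $R\otimes_k S$. Write $R^{1/p^e} = R^{\oplus a_e(R)}\oplus M_e$ and $S^{1/p^e} = S^{\oplus a_e(S)}\oplus N_e$, where $M_e, N_e$ have no free summands. Tensoring over $k$ distributes, giving
\[
R^{1/p^e}\otimes_k S^{1/p^e} \cong (R\otimes_k S)^{\oplus a_e(R)a_e(S)} \;\oplus\; \big(R^{\oplus a_e(R)}\otimes_k N_e\big)\oplus\big(M_e\otimes_k S^{\oplus a_e(S)}\big)\oplus\big(M_e\otimes_k N_e\big).
\]
So it suffices to show none of the last three summands has a free $(R\otimes_k S)$-summand. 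For a term like $R^{\oplus a_e(R)}\otimes_k N_e \cong N_e^{\oplus a_e(R)}$ (viewed as an $(R\otimes_k S)$-module via $R$ acting freely on the left factor and $S$ acting on $N_e$), a free $(R\otimes_k S)$-summand would, after applying the exact functor $-\otimes_{(R\otimes_k S)}\kappa$ for a suitable residue construction — or more cleanly, after restricting scalars along $S\hookrightarrow R\otimes_k S$ and using that $R\otimes_k S$ is faithfully flat and graded-free-ish over $S$ — force a free $S$-summand in $N_e$, a contradiction. I expect this reduction to be the main obstacle: one must argue carefully that a graded module over $R\otimes_k S$ which is "extended" from a module over one factor cannot acquire free summands it did not have over that factor. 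The cleanest route is probably to use the $I_e$-characterization of free rank from Remark \ref{IeDef}: show $I_e(R\otimes_k S)$ corresponds under the grading to $I_e(R)\otimes_k S + R\otimes_k I_e(S)$ (the "box" of the two ideals), so that $(R\otimes_k S)/I_e(R\otimes_k S) \cong (R/I_e(R))\otimes_k (S/I_e(S))$, whence lengths multiply: $l(T/I_e^T) = l(R/I_e^R)\cdot l(S/I_e^S)$.

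Finally, with $a_e(T) = a_e(R)a_e(S)$ established (for perfect $k$, so $\alpha=0$ and $a_e = l(R/I_e)$), and writing $d = \dim R$, $d' = \dim S$ so $\dim T = d+d'$, I would conclude
\[
s(R\otimes_k S) = \lim_{e\to\infty}\frac{a_e(T)}{p^{e(d+d')}} = \Big(\lim_{e\to\infty}\frac{a_e(R)}{p^{ed}}\Big)\Big(\lim_{e\to\infty}\frac{a_e(S)}{p^{ed'}}\Big) = s(R)\cdot s(S),
\]
where the product of limits is valid because both individual limits exist (Tucker \cite{Tuc10}, together with Lemma \ref{GradedToLocal}). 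A small bookkeeping point to handle is that $T$ may a priori need to be localized at its homogeneous maximal ideal or treated $\mathbb{N}$-gradedly to match the conventions of Remark \ref{RingAssumptions}; Lemma \ref{GradedToLocal} lets me pass freely between the graded and local pictures, so this is routine.
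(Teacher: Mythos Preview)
Your overall strategy matches the paper's exactly: identify $(R\otimes_k S)^{1/p^e}\cong R^{1/p^e}\otimes_k S^{1/p^e}$, distribute the tensor product over the decompositions $R^{1/p^e}=R^{\oplus a_e}\oplus M_e$ and $S^{1/p^e}=S^{\oplus b_e}\oplus N_e$, and argue that the cross terms contribute no free $(R\otimes_k S)$-summands, so that $a_e(R\otimes_k S)=a_e(R)\,a_e(S)$. (Incidentally, you correctly list the $M_e\otimes_k N_e$ term, which the paper's displayed decomposition omits; the same argument covers it.)

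Where you diverge is in handling the key step. The paper isolates this as Lemma~\ref{GradedTensorSummand}: if $M\otimes_k N$ has a free graded $(R\otimes_k S)$-summand, then $M$ and $N$ each have a free summand over $R$ and $S$ respectively. The proof is short and uses the grading directly: a graded surjection $\phi:M\otimes_k N\to R\otimes_k S$ hits $1$, so some homogeneous simple tensor $x\otimes y$ has $\phi(x\otimes y)$ of degree zero and nonzero, hence (after rescaling) equal to $1$; composing with the $R$-linear projection $R\otimes_k S\to R\otimes_k S_0=R$ gives an $R$-map $M\to R$ sending $x\mapsto 1$. Your proposed $I_e$-route --- showing $I_e^{R}\otimes_k S + R\otimes_k I_e^{S}\subseteq I_e^{R\otimes_k S}$ and combining with the easy inequality $a_e(R\otimes_k S)\geq a_e(R)a_e(S)$ --- is a legitimate alternative, but when you try to verify that inclusion you will need exactly the same grading projection (compose an arbitrary $\Psi\in\Hom_T(T^{1/p^e},T)$ with $R\otimes_k S\to R\otimes_k S_0=R$ to produce an $R$-map witnessing $r\notin I_e^R$). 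So the two routes are repackagings of one idea; the paper's formulation has the virtue of being a clean module-theoretic statement independent of Frobenius. Your first, vaguer suggestion (restriction of scalars / faithful flatness) is not obviously workable without invoking the grading, since $R\otimes_k S$ has no useful freeness or projectivity over $S$ absent that structure.
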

\begin{proof}
First, note that since $k$ is perfect, $(R\otimes_k S)^{1/p^e} \simeq R^{1/p^e}\otimes_k S^{1/p^e}$. Suppose that $R^{1/p^e}\simeq_{R-mod} R^{\oplus a_e}\oplus M_e$, and $S^{1/p^e}\simeq_{S-mod} S^{\oplus b_e}\oplus N_e$. Then $(R\otimes_k S)^{1/p^e}\simeq_{(R\otimes_k S)-mod} (R^{\oplus a_e}\oplus M_e)\otimes_k (S^{\oplus b_e}\oplus N_e)\simeq (R\otimes_k S)^{\oplus a_e b_e}\oplus (R\otimes_k N_e)^{\oplus a_e}\oplus (M_e\otimes_k S)^{\oplus b_e}$. By Lemma \ref{GradedTensorSummand}, $(R\otimes_k N_e)$ and $(M_e\otimes_k S)$ have no free summands. It follows immediately that the free rank of $(R\otimes_k S)^{1/p^e}$ is $a_e b_e$. Since $\dim (R\otimes_k S) = \dim R+\dim S$, we conclude that $s(R\otimes_k S) = \lim_{e\to\infty}\frac{a_e b_e}{p^{e(\dim R+\dim S)}} = s(R)\cdot s(S)$.
\end{proof}

\begin{lem}\label{GradedTensorSummand}
Let $A$ be a semigroup, and let $R$ and $S$ be $A$-graded rings over a field $k$ with zeroeth graded piece equal to $k$. Let $M$ and $N$ be graded $R$- and $S$-modules, respectively. Suppose $M\otimes_k N$ has a free summand as a graded $R\otimes_k S$-module. Then both $M$ and $N$ have free summands as graded $R$- and $S$-modules.
\end{lem}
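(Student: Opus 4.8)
The plan is to work entirely in the category of $A$-graded modules, using the fact that a graded module over a graded ring with zeroeth piece $k$ has a well-defined notion of free summand (equivalently, of minimal generators), and to exploit the tensor decomposition at the level of a single graded piece of lowest "degree" occurring in a free summand. First I would observe that a graded free summand of $M\otimes_k N$ over $R\otimes_k S$ is detected by a surjection $\pi\colon M\otimes_k N\twoheadrightarrow R\otimes_k S$ of graded modules splitting the inclusion of that summand; dually, it corresponds to an element $\xi\in M\otimes_k N$ whose image under $\pi$ is a unit, together with the splitting map. So fix such a $\pi$ and a homogeneous generator $\xi$ of the free summand, of degree $a_0\in A$.

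Next I would use the graded structure: write $\xi=\sum_j \mu_j\otimes\nu_j$ with $\mu_j\in M$, $\nu_j\in N$ homogeneous, $\deg\mu_j+\deg\nu_j=a_0$, and the $\mu_j$ linearly independent over $k$ (after collecting terms), likewise the $\nu_j$. The key step is to manufacture from $\pi$ a graded surjection $M\twoheadrightarrow R$ splitting off $R\cdot\mu_{j_0}$ for some $j_0$ (and symmetrically for $N$). To do this, note $\Hom_{R\otimes_k S}(M\otimes_k N, R\otimes_k S)$ contains $\Hom_R(M,R)\otimes_k\Hom_S(N,S)$, and more usefully, evaluating the $S$-variable at a suitable graded functional $N\to S$ turns $\pi$ into an $R$-linear map $M\to R$. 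Concretely: pick a homogeneous $\psi\in\Hom_S(N,S)$ — in fact one can take a $k$-linear functional on the relevant graded piece of $N$, extended by zero, which is automatically $S$-linear into the zeroeth graded piece — such that $\psi(\nu_{j_0})=1$ for one index $j_0$ while controlling the others. Then $x\mapsto (\mathrm{id}_R\otimes\psi)\bigl(\pi(x\otimes\nu_{j_0})\bigr)$ is a graded $R$-linear map $M\to R$; by choosing $\psi$ to isolate $\nu_{j_0}$ (possible after a change of $k$-basis of the finite-dimensional graded piece of $N$ in play) one arranges that this composite sends $\mu_{j_0}$ to a unit of $R$, hence $R\cdot\mu_{j_0}$ is a graded free summand of $M$. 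The symmetric argument, evaluating the $R$-variable instead, produces a graded free summand of $N$.

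The main obstacle I expect is the bookkeeping in the previous paragraph: a priori $\pi(x\otimes\nu_{j_0})$ need not land in $R\otimes k$ (the degree-$0$ piece of $S$), so one cannot just "apply $\psi$" naively; the clean fix is to first reduce modulo the ideal of $S$ generated by its positive-degree elements — i.e. tensor the whole picture over $S$ with $S/S_{>0}\cong k$ — which kills $N_e$ and $M_e\otimes_k$(nothing relevant) and turns $M\otimes_k N$ into $M\otimes_k (N\otimes_S k)$, an $R$-module, with $R\otimes_k S$ becoming $R$; the induced surjection $M\otimes_k(N\otimes_S k)\twoheadrightarrow R$ then exhibits, after picking a $k$-basis element of $N\otimes_S k$ hit nontrivially, a free summand of $M$ over $R$. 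One must check that this reduction preserves "being a free summand" on the nose, which follows because $-\otimes_S k$ is right exact and sends graded free $S\otimes_k R$-modules to graded free $R$-modules, and a split surjection stays split after any base change. Running the same reduction on the $R$-side finishes the proof.
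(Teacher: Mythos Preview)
Your ``clean fix'' via $-\otimes_S(S/S_+)$ is correct and is essentially the paper's argument in base-change language. The paper observes directly that since $1\in\operatorname{im}\phi$ and $\operatorname{im}\phi$ is generated by images of homogeneous simple tensors, some homogeneous simple tensor $x\otimes y$ has $\phi(x\otimes y)\in(R\otimes_k S)_0=k^\times$; after rescaling, $\phi(x\otimes y)=1$. It then composes the $R$-linear map $m\mapsto\phi(m\otimes y)$ with the $R$-linear projection $\psi\colon R\otimes_k S\to R\otimes 1\cong R$ (which is exactly your base change $-\otimes_S k$ applied to $R\otimes_k S$) to obtain a graded map $M\to R$ sending $x\mapsto 1$. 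Your basis element of $N\otimes_S k$ ``hit nontrivially'' plays the role of $\bar y$; finding the simple tensor up front lets the paper avoid decomposing $N\otimes_S k$ and arguing componentwise, but the content is the same.

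One small remark on your second paragraph: the $\psi$ you construct there is $S$-linear only as a map $N\to S/S_+=k$, not as a map $N\to S$, and in any case $\mathrm{id}_R\otimes\psi$ cannot be applied to $\pi(x\otimes\nu_{j_0})\in R\otimes_k S$, since your $\psi$ has domain $N$, not $S$. You rightly abandon this for the base-change argument, which does go through.
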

\begin{proof}
Suppose that $M\otimes_k N$ has a free summand. Then we have a graded map $\phi: M\otimes_k N\onto R\otimes_k S$. In particular, $1\in \im \phi$. Since $\im\phi$ is generated by the images of graded simple tensors in $M\otimes_k N$, it follows that there is a graded simple tensor $x\otimes y\in M\otimes_k N$ such that $\deg\phi(x\otimes y)=0$ but $\phi(x\otimes y)\neq 0$. The degree zero part of $R\otimes_k S$ is isomorphic to $k$, so (after replacing $\phi$ by $\frac{1}{\phi(x\otimes y)}\phi$) we may assume that $\phi(x\otimes y) = 1\in R\otimes_k S$. 

Now, 1 generates a free summand $R\otimes 1$ of the free $R$-module $R\otimes_k S$, so there is an $R$-module map $\psi: R\otimes_k S\onto R$ sending $\phi(x\otimes y)\mapsto 1$. Consider the map $\psi\circ\phi: M\to R\otimes_k S\to R$. This map sends $x\mapsto \phi(x\otimes y)\mapsto 1$. We conclude that $M$ has a free summand as an $R$-module. By a symmetric argument, $N$ has a free summand also.
\end{proof}

\begin{thm}\label{LocalProductFSignature}
Suppose $(R, m_R, k)$ and $(S, m_S, k)$ are local rings containing the same perfect residue field $k$. Let $m\subset R\otimes_k S$ be the maximal ideal $m_R\otimes_k S + R\otimes_k m_S$. Then $s((R\otimes_k S)_m) = s(R)\cdot s(S)$.
\end{thm}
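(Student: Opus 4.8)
The plan is to run the proof of Theorem~\ref{GradedProductFSignature} in the local setting; the only genuinely new ingredient is a local analogue of Lemma~\ref{GradedTensorSummand}. Write $T = (R\otimes_k S)_m$. First I would note that $(R\otimes_k S)/m\cong(R/m_R)\otimes_k(S/m_S)\cong k$, so $T$ is local with residue field $k$ and $\alpha_T = 0$; thus $s(T) = \lim_{e\to\infty} a_e(T)/p^{e\dim T}$, where $a_e(T)$ is the free rank of $T^{1/p^e}$. Since $k$ is perfect, $(R\otimes_k S)^{1/p^e}\cong R^{1/p^e}\otimes_k S^{1/p^e}$, and since Frobenius pushforward commutes with localization, $T^{1/p^e}\cong(R^{1/p^e}\otimes_k S^{1/p^e})_m$ as $T$-modules.

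Next, split off maximal free summands: $R^{1/p^e}\cong R^{\oplus a_e}\oplus M_e$ and $S^{1/p^e}\cong S^{\oplus b_e}\oplus N_e$ with $M_e$ and $N_e$ having no free summand (free rank is well defined over a local ring). Tensoring over $k$ and localizing at $m$, and using that the $m$-localization of a free $R$-module tensored over $k$ with a free $S$-module is a free $T$-module, gives
\[
T^{1/p^e}\cong T^{\oplus a_e b_e}\ \oplus\ \big((R\otimes_k N_e)_m\big)^{\oplus a_e}\ \oplus\ \big((M_e\otimes_k S)_m\big)^{\oplus b_e}\ \oplus\ (M_e\otimes_k N_e)_m.
\]
A direct sum of $R$-modules with no free summand has no free summand (given a surjection onto $R$, some restriction to a summand would have to be surjective, else all the images lie in $m_R$; a surjection onto the free module $R$ splits). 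So $a_e(T) = a_e b_e$ as soon as the last three summands have no free $T$-summand, and the theorem reduces to: if $M$ is a finitely generated $R$-module with no free summand and $N$ is any finitely generated $S$-module, then $(M\otimes_k N)_m$ has no free $T$-summand (and symmetrically).

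To prove this I would mimic Lemma~\ref{GradedTensorSummand}, replacing its ``degree-zero part'' step by ``unit in a local ring.'' If $(M\otimes_k N)_m$ had a free $T$-summand there would be a surjection $\Psi\colon(M\otimes_k N)_m\onto T$. Since $M\otimes_k N$ is finitely generated over $R\otimes_k S$, its image under $\Psi$ lies in $\tfrac1u(R\otimes_k S)$ for a single $u\notin m$ (clear denominators of finitely many generators), so $\psi := u\Psi$ is an $R\otimes_k S$-linear map $M\otimes_k N\to R\otimes_k S$; writing $\Psi(\xi) = 1$ and $\xi = \eta/v$ with $\eta\in M\otimes_k N$, $v\notin m$, we get $\psi(\eta) = uv\notin m$ because $m$ is prime. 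Expanding $\eta$ as a finite $k$-linear combination of simple tensors and using that $m$ is an ideal, $\psi(x\otimes y)\notin m$ for some $x\in M$, $y\in N$. Now $x'\mapsto\psi(x'\otimes y)$ is $R$-linear from $M$ to $R\otimes_k S$; picking a $k$-basis of $S$ containing $1$ with associated coordinate projection $\pi_\lambda\colon R\otimes_k S\to R$, we have $\pi_\lambda(\psi(x\otimes y))\notin m_R$ for some $\lambda$ (otherwise $\psi(x\otimes y)\in m_R\otimes_k S\subseteq m$). The composite $M\to R$ then sends $x$ to a unit, so $M$ has a free $R$-summand, a contradiction.

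Finally, since $\dim T = \dim R+\dim S$ and $\alpha = 0$ for all three rings, $s(T) = \lim_{e\to\infty} a_e b_e/p^{e(\dim R+\dim S)} = \big(\lim_{e\to\infty} a_e/p^{e\dim R}\big)\big(\lim_{e\to\infty} b_e/p^{e\dim S}\big) = s(R)s(S)$. I expect the main obstacle to be the local analogue of Lemma~\ref{GradedTensorSummand}: in the graded proof one simply isolates a degree-zero simple tensor, whereas here one must combine denominator-clearing, primality of $m$, and the coordinate-projection trick to extract a genuine $R$-linear splitting of $M$. A minor secondary point is checking that $T$ is Noetherian and that dimension is additive across $\otimes_k$, both of which are automatic in the toric application, where all rings are finitely generated $k$-algebras.
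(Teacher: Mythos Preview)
Your proposal is correct and follows essentially the same approach as the paper: decompose $T^{1/p^e}$ via the splittings of $R^{1/p^e}$ and $S^{1/p^e}$, then invoke a local analogue of Lemma~\ref{GradedTensorSummand} (the paper's Lemma~\ref{TensorSummand}) to kill free summands in the cross terms. Your proof of that lemma---clear denominators, find a simple tensor landing outside $m$, then project via a $k$-basis of $S$---is a mild rephrasing of the paper's argument, which instead observes that an element of the free $R$-module $R\otimes_k S$ lying outside $m_R(R\otimes_k S)$ generates a free $R$-summand; the two are equivalent. One small point in your favor: you correctly retain the fourth summand $(M_e\otimes_k N_e)_m$ in the decomposition, which the paper's displayed formula omits (though the same lemma disposes of it).
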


\begin{proof}
First, note that since $k$ is perfect, $(R\otimes_k S)^{1/p^e} \simeq R^{1/p^e}\otimes_k S^{1/p^e}$. Suppose that $R^{1/p^e}\simeq_{R-mod} R^{\oplus a_e}\oplus M_e$, and $S^{1/p^e}\simeq_{S-mod} S^{\oplus b_e}\oplus N_e$. Then $(R\otimes_k S)_m^{1/p^e}\simeq_{(R\otimes_k S)_m-mod} ((R^{\oplus a_e}\oplus M_e)\otimes_k (S^{\oplus b_e}\oplus N_e))_m\simeq (R\otimes_k S)_m^{\oplus a_e b_e}\oplus (R\otimes_k N_e)_m^{\oplus a_e}\oplus (M_e\otimes_k S)_m^{\oplus b_e}$. By Lemma \ref{TensorSummand}, $(R\otimes_k N_e)_m$ and $(M_e\otimes_k S)_m$ have no free summands. It follows immediately that the free rank of $(R\otimes_k S)_m^{1/p^e}$ is $a_e b_e$. Since $\dim (R\otimes_k S)_m = \dim R+\dim S$, we conclude that $s((R\otimes_k S)_q) = \lim_{e\to\infty}\frac{a_e b_e}{p^{e(\dim R+\dim S)}} = s(R)\cdot s(S)$.
\end{proof}

\begin{lem}\label{TensorSummand}
Let $(R, m_R, k)$ and $(S, m_S, k)$ be local rings with residue field $k$. Let $m\subset R\otimes_k S$ be the maximal ideal $m_R\otimes_k S + R\otimes_k m_S$. Let $M$ and $N$ be $R$- and $S$-modules, respectively. Suppose $(M\otimes_k N)_m$ has a free summand as an $(R\otimes_k S)_m$-module. Then both $M$ and $N$ have free summands as $R$- and $S$-modules.
\end{lem}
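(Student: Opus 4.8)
The plan is to transcribe the proof of Lemma~\ref{GradedTensorSummand}, with the grading replaced by an auxiliary \emph{reduction modulo a maximal ideal}: localizing at $m$ destroys the simple-tensor structure of $(M\otimes_k N)_m$, but tensoring down with a residue field restores enough of it to run the same argument. First I would fix notation: write $T=R\otimes_k S$ and $\mathfrak m=m_R\otimes_k S+R\otimes_k m_S$. Choosing a $k$-basis of $S$ adapted to $m_S$ (so that $m_S$ is spanned by the non-identity basis vectors) shows $\mathfrak m\cap R=m_R$ and, symmetrically, $\mathfrak m\cap S=m_S$, so that $R\to T_{\mathfrak m}$ and $S\to T_{\mathfrak m}$ are local homomorphisms of local rings. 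Since $R$ and $S$ are already local, I would then record the two computations $T_{\mathfrak m}\otimes_R k\cong T_{\mathfrak m}/m_RT_{\mathfrak m}\cong (T/m_RT)_{m_S}\cong S_{m_S}=S$ and, symmetrically, $T_{\mathfrak m}\otimes_S k\cong R$.

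With this in hand, a nonzero free summand of $(M\otimes_k N)_{\mathfrak m}$ gives a surjective $T_{\mathfrak m}$-linear map $\Phi\colon (M\otimes_k N)_{\mathfrak m}\onto T_{\mathfrak m}$ (project onto one free factor). To extract a free summand of $N$, I would apply $-\otimes_R k$ to $\Phi$. Using the identities above, together with $(M\otimes_k N)_{\mathfrak m}\otimes_R k\cong (M/m_RM)\otimes_k N$ (here $m_R(M\otimes_k N)=(m_RM)\otimes_k N$ since $N$ is $k$-flat, and the residual localization at $m_S$ is trivial as $S$ is local), this yields a surjective $S$-linear map $\bar\Phi\colon V\otimes_k N\onto S$, where $V:=M/m_RM$ is a $k$-vector space; $S$-linearity holds because the $R$- and $S$-module structures on $M\otimes_k N$ and on $T_{\mathfrak m}$ commute. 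Writing $1=\sum_i\bar\Phi(v_i\otimes n_i)$ and using that $S$ is local, some $u:=\bar\Phi(v_{i_0}\otimes n_{i_0})$ is a unit; then $n\mapsto u^{-1}\bar\Phi(v_{i_0}\otimes n)$ is an $S$-linear map $N\to S$ sending $n_{i_0}$ to $1$, so $Sn_{i_0}\cong S$ splits off $N$. The symmetric argument — apply $-\otimes_S k$ to $\Phi$ and use $T_{\mathfrak m}\otimes_S k\cong R$ — produces a free summand of $M$.

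I expect the main obstacle to be precisely the bookkeeping in the first paragraph and its use in the second: one must verify carefully that, after localizing at $\mathfrak m$ and then reducing modulo $m_R$, the target becomes $S$ \emph{itself}, with no surviving localization or completion, since the whole argument then rests on the single fact that $S$ (resp.\ $R$) is local. Everything else is a routine rewriting of the graded proof, and in particular no finite-generation hypotheses on $M$ or $N$ are required.
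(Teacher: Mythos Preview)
Your proof is correct, but it takes a genuinely different route from the paper's. The paper does not reduce modulo $m_R$; instead it \emph{clears denominators}: it uses $\Hom_{T_m}\bigl((M\otimes_k N)_m, T_m\bigr)\simeq \bigl(\Hom_T(M\otimes_k N, T)\bigr)_m$ to pull $\Phi$ back to a $T$-linear map $\phi\colon M\otimes_k N\to T$ whose image is not contained in $mT$, picks a simple tensor $x\otimes y$ with $\phi(x\otimes y)\notin m_R T$, and then exploits that $T=R\otimes_k S$ is \emph{free} as an $R$-module (so any element outside $m_R T$ generates a free $R$-summand) to project $T\onto R$ and obtain a splitting $M\to R$, $x\mapsto 1$. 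Your approach instead kills $m_R$ and lands directly in $S$, using only that $S$ is local. The trade-off: the paper's argument is shorter once the denominator-clearing step is granted, but that step tacitly uses that $\Hom$ commutes with the localization at $m$, which in general requires $M\otimes_k N$ to be finitely presented over $T$ (harmless in the paper's intended application, where $M=M_e$ and $N=N_e$ are finitely generated over $F$-finite rings). Your base-change argument avoids this and, as you note, works without any finiteness hypotheses on $M$ or $N$.
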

\begin{proof}
Suppose that $(M\otimes_k N)_m$ has a free summand. Then we have a map $\phi: (M\otimes_k N)_m\onto (R\otimes_k S)_m$. Since $\phi\in \Hom_{((R\otimes_k S)_m)}((M\otimes_k N)_m, (R\otimes_k S)_m)\simeq (\Hom_{R\otimes_k S}(M\otimes_k N, R\otimes_k S))_m$, by clearing denominators we may assume that $\phi$ maps $M\otimes_k N\to R\otimes_k S$ so that $\im \phi\not\subset m(R\otimes_k S)$. It follows that there is a simple tensor $x\otimes y\in M\otimes_k N$ such that $\phi(x\otimes y)\not\in m(R\otimes_k S)$. In particular, $\phi(x\otimes y)\not\in m_R(R\otimes_k S)$. Recall that if $A$ is a free module over a Noetherian local ring $(R, m_R)$, then each element of $A\backslash m_R A$ generates a free summand of $A$. Thus, $\phi(x\otimes y)$ generates a free summand of the free $R$-module $R\otimes_k S$, and there is an $R$-module map $\psi: R\otimes_k S\onto R$ sending $\phi(x\otimes y)\mapsto 1$.

Now consider the map $\psi\circ \phi: M\to R\otimes_k S\to R$. This map sends $x\mapsto \phi(x\otimes y)\mapsto 1$. We conclude that $M$ has a free summand as an $R$-module. By a symmetric argument, $N$ has a free summand also.
\end{proof}

The following lemma (in particular, Lemma \ref{GradedToLocal}.\ref{GradedEqualsLocal}) demonstrates that if $R$ is $\mathbb{N}$-graded with homogeneous maximal ideal $m$, then the (graded) $F$-signature of $R$ and the (local) $F$-signature of $R_m$ are equal. Moreover, the ideal $I_e\subset R_m$ has a ``graded counterpart" $I_e^{\gr}\subset R$ which may be used to define $F$-signature in the graded category.

\begin{lem}\label{GradedToLocal}
Let $R$ be an $\mathbb{N}$-graded ring over a field $k$ and homogeneous maximal ideal $m$. Set $I_e^{\gr} = \{r\in R\suchthat \forall \phi\in \Hom_R(R^{1/p^e}, R), \phi(r^{1/p^e})\in m\}$. let $I_e\subset R_m$ be the ideal defined in Definition \ref{IeDef}, and $i: R\to R_m$ the natural map. Then:
\begin{enumerate}
\item $F_*^eI_e^{gr}$ is the kernel of the map $\psi: F_*^eR\to\Hom_R(\Hom_R(F_*^eR, R), R/mR)$ given by $x\mapsto (\phi\mapsto \phi(x) + mR)$.
\item $I_e^{\gr}$ is homogeneous.
\item $I_e^{\gr} = i^{-1}I_e = I_e \cap R$.
\item If $a_e(R)$ is defined as in \ref{FSignatureDefinition}, then $a_e(R) = l(F_*^eR/F_*^eI_e^{\gr})$.
\item \label{GradedEqualsLocal} For each $e$, $a_e(R) = a_e(R_m)$. Thus, $s(R) = s(R_m)$.
\end{enumerate}
\end{lem}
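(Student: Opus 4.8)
The plan is to prove the five parts in order, each one feeding into the next. For (1) I would just unwind the definitions: under the standard identification $F_*^e R\cong R^{1/p^e}$ the element $x$ of the underlying set $R$ corresponds to $x^{1/p^e}$, so by construction $F_*^e I_e^{\gr}$ is exactly the set of $x\in F_*^e R$ with $\phi(x)\in mR$ for every $\phi\in\Hom_R(F_*^e R,R)$, and this is precisely $\ker\psi$. For (2), I would observe that since $R$ is Noetherian and $F$-finite, $F_*^e R$ is a finitely generated graded $R$-module (graded over $\tfrac1{p^e}\mathbb{N}$, or over $\mathbb{N}$ after rescaling), hence $\Hom_R(F_*^e R,R)$ is a finitely generated graded $R$-module, and $R/mR\cong k$ is graded; therefore $\Hom_R(\Hom_R(F_*^e R,R),R/mR)$ is graded and $\psi$ is a degree-$0$ homogeneous $R$-module homomorphism (a homogeneous $x$ of degree $d$ produces a functional supported on the degree-$(-d)$ part of $\Hom_R(F_*^e R,R)$). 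Consequently $\ker\psi=F_*^e I_e^{\gr}$ is a graded submodule of $F_*^e R$, which is to say $I_e^{\gr}$ is a homogeneous ideal of $R$.

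For (3) I would use two localization facts: $R^{1/p^e}$ is finitely presented over the Noetherian ring $R$, so $\Hom$ commutes with localization at $m$; and $F_*^e$ commutes with localization for reduced $F$-finite rings. Combining these, $\Hom_{R_m}(R_m^{1/p^e},R_m)\cong\bigl(\Hom_R(R^{1/p^e},R)\bigr)_m$, so every element of the left-hand side has the form $\phi/s$ with $\phi\in\Hom_R(R^{1/p^e},R)$ and $s\notin m$. Then for $r\in R$: $i(r)\in I_e$ iff $\phi(r^{1/p^e})/s\in mR_m$ for all such $\phi,s$, iff $\phi(r^{1/p^e})\in mR_m\cap R=m$ for all $\phi$ (using that $m$ is maximal), iff $r\in I_e^{\gr}$. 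Injectivity of $i$ (as $R$ is a domain) then gives $i^{-1}I_e=I_e\cap R=I_e^{\gr}$, and localizing back shows $I_e=I_e^{\gr}R_m$.

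Part (4) is the crux. By Krull--Schmidt in the category of finitely generated graded $R$-modules --- valid because $R$ is graded with $R_0=k$ a field, so the endomorphism ring of an indecomposable graded module is graded-local --- I would write $F_*^e R\cong\bigl(\bigoplus_{j=1}^{a_e}R(-d_j)\bigr)\oplus M_e$ with $M_e$ having no graded free summand and $a_e=a_e(R)$, so $\Hom_R(F_*^e R,R)\cong\bigoplus_j R(d_j)\oplus\Hom_R(M_e,R)$. The key claim is that a homogeneous $x=(x_1,\dots,x_{a_e},y)$ lies in $I_e^{\gr}$ iff every $x_j\in m$: for homogeneous $\psi'\in\Hom_R(M_e,R)$ one always has $\psi'(y)\in m$, since otherwise $\psi'(y)$ would be a unit and $Ry$ a graded free summand of $M_e$, a contradiction; hence some homogeneous $\phi$ has $\phi(x)\notin m$ iff some $x_j$ is a unit. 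Therefore $F_*^e I_e^{\gr}=\bigoplus_j mR(-d_j)\oplus M_e$, so $F_*^e R/F_*^e I_e^{\gr}\cong(R/m)^{a_e}=k^{a_e}$, which has length $a_e$, giving $a_e(R)=l(F_*^e R/F_*^e I_e^{\gr})$. Finally, for (5): the proof of (4) shows $R/I_e^{\gr}\cong k^{a_e(R)}$ is annihilated by $m$, hence equals its own localization at $m$; together with $I_e=I_e^{\gr}R_m$ from (3) this gives $R_m/I_e\cong R/I_e^{\gr}$ and $F_*^e(R_m/I_e)\cong F_*^e R/F_*^e I_e^{\gr}$ as $R$-modules. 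By Remark \ref{IeDef} applied to the local ring $(R_m,mR_m,k)$, $a_e(R_m)=l(F_*^e(R_m/I_e))=l(F_*^e R/F_*^e I_e^{\gr})=a_e(R)$; since $\dim R=\dim R_m=d$, dividing by $p^{e(d+\alpha)}$ and letting $e\to\infty$ yields $s(R)=s(R_m)$.

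The main obstacle I anticipate is part (4): correctly setting up the graded Krull--Schmidt decomposition of $F_*^e R$ and verifying the characterization of which homogeneous elements generate a free summand --- in particular the observation that no $\phi\in\Hom_R(F_*^e R,R)$ can carry a homogeneous element of the non-free part $M_e$ to a unit. The localization bookkeeping in part (3) (that both $\Hom$ and $F_*^e$ commute with localization at $m$) is routine but must be invoked with care, and parts (1), (2), (5) are then essentially formal consequences.
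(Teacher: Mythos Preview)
Your proposal is correct and follows essentially the same route as the paper: unwind definitions for (1), use gradedness of the relevant $\Hom$ modules for (2), localize for (3), decompose $F_*^eR=R^{\oplus a_e}\oplus M_e$ and identify $F_*^eI_e^{\gr}=m R^{\oplus a_e}\oplus M_e$ for (4), and compare lengths after localizing for (5). Your treatment of (3) (directly checking the elementwise condition via $\Hom_{R_m}(R_m^{1/p^e},R_m)\cong\Hom_R(R^{1/p^e},R)_m$) and of (5) (using that $R/I_e^{\gr}\cong k^{a_e}$ is already $m$-torsion, rather than invoking $m^{[p^e]}\subset I_e^{\gr}$) are minor variations that are, if anything, slightly cleaner than the paper's arguments.
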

\begin{proof}
\begin{enumerate}
\item This follows immediately from the definition.
\item Since $F_*^eR$, $R$, and $R/m$ are graded $R$-modules, so are the modules of homomorphisms between them. The map $\psi$ is degree-preserving, that is, it's homogeneous of degree zero. It follows that $F_*^eI_e^{gr}=\ker\psi$ is a graded submodule. Hence, $I_e^{\gr}$ is a homogeneous ideal.
\item Clearly $F_*^eI_e = \ker(\psi\otimes_R R_m)$. It follows that $F_*^eI_e \simeq F_*^eI_e^{\gr}\otimes_R R_m$ (by the flatness of localization). Thus, $I_e = I_e^{\gr}R_m = i(I_e^{\gr})$. Also, $i:R\to R_m$ is injective. (If $i(x) = 0$, then $wx = 0$ for $w\notin m$. If $x_i$ is the lowest-degree term of $x$, and $w_0$ is the degree-zero term of $w$, then $w_0x_i = 0$, but $w_0\neq 0\in k$, so $x_i = 0$. Thus, $x=0$.) We conclude that $I_e^{\gr} = I_e\cap R$.
\item Suppose $F_*^eR = R^{\oplus a_e}\oplus M_e$, where the decomposition is graded and $M_e$ has no graded free summands. We'll show that $F_*^eI_e^{\gr} = mR^{\oplus a_e}\oplus M_e$, from which the claim follows immediately. Since $I_e^{\gr}$ is graded, to compute $I_e^{gr}$ or $F_*^eI_e^{gr}$, we need only check which homogeneous elements they contain. Suppose $x\in F_*^eI_e^{\gr}$. Then $\phi(x) \notin m$ for some $\phi$. Without loss of generality, we may assume that $\phi$ is homogeneous. Then $\phi(x)$ is homogeneous and not in $m$, hence $\phi(x)\in k$, so $x$ generates a direct summand $Rx$ of $F_*^eR$ that splits off from $F_*^eR$. This cannot occur if $x\in mR^{\oplus a_e}\oplus M_e$, so $mR^{\oplus a_e}\oplus M_e\subset F_*^eI_e^{\gr}$. On the other hand, suppose $x$ is homogeneous and $x\notin mR^{\oplus a_e}\oplus M_e$. This occurs precisely when $x$ is a generator of one of the copies of $R$ in the decomposition. In that case, clearly $x\notin F_*^eI_e^{\gr}$. We conclude that $F_*^eI_e^{gr} = mR^{\oplus a_e}\oplus M_e$. The claim follows immediately.
\item It's easily checked that $m^{[p^e]}R_m\subset I_e$, so $m^{[p^e]}R\subset I_e^{\gr}$. Thus, $\Rad I_e^{[gr]} = m$, so $R/I_e^{\gr} = R_m/I_e^{\gr}R_m$ as $k$-vector spaces. We conclude that $l(R/I_e^{\gr}) = l(R_m/I_e)$. It follows that $a_e(R) = l(F_*R/F_*I_e^{\gr}) = l(F_*R_m/F_*I_e) = a_e(R_m)$, as we desired to show.
\end{enumerate}
\end{proof}

\begin{rem}
Lemma \ref{GradedToLocal} doesn't actually apply to all monomial rings. In particular, a monomial ring arising from a cone $\sigma$ that is not full-dimensional does not admit an $\mathbb{N}$-grading with zeroeth graded piece $k$. On the other hand, we may apply Theorem \ref{GradedProductFSignature} to reduce to the case where Lemma \ref{GradedToLocal} applies. (If $\sigma$ is not full-dimensional, then $k[\sigma^{\dual}\cap M]$ decomposes as a tensor product of the coordinate ring of a torus, which has $F$-signature equal to 1, and a monomial ring arising from a full-dimensional cone, which does admit an $\mathbb{N}$-grading: see Lemma \ref{ConeProduct}.) 
\end{rem}

\bibliography{Frobenius}
\bibliographystyle{amsalpha}

\end{document}